
\documentclass[a4paper,12pt]{amsart}
\usepackage{tikz}
\usepackage{fullpage}
\usepackage{enumerate}
\usepackage{amsmath,amscd}
\usepackage{upgreek} 

\usepackage{amsthm, stmaryrd}
\usepackage{hyperref}
\usetikzlibrary{3d}
\usepackage{amssymb,MnSymbol}
\usepackage{latexsym}

\newtheorem{Theorem}{Theorem}

\newtheorem*{Addendum}{Addendum to Theorem~1}

\newtheorem{Proposition}[Theorem]{Proposition}
\newtheorem{Lemma}[Theorem]{Lemma}
\newtheorem{Corollary}[Theorem]{Corollary}

\theoremstyle{definition}
\newtheorem{Definition}[Theorem]{Definition}
\newtheorem{Example}[Theorem]{Example}
\newtheorem{Remark}[Theorem]{Remark}

\def\CC{{\mathbb C}}

\def\ZZ{{\mathbb Z}}

\def\NN{{\mathbb N}}

\def\sll{\mathfrak{sl}}
\def\sl2{\sll_2(\CC)}

\def\SLL{\operatorname{SL}}
\def\SL2{\SLL_2(\CC)}

\def\SU2{\operatorname{SU}(2)}

\def\tr{\operatorname{tr}}

\def\Ad{\operatorname{Ad}}

\def\id{\mathrm{id}}

\newcommand{\ns}[1]{#1_{ns}}

\def\b1{k}

\def\co{\,\colon\,}

\makeatletter
\@namedef{subjclassname@2020}{\textup{2020} Mathematics Subject Classification}
\makeatother

\keywords{Representation variety, character variety}
\subjclass{Primary 57K31; Secondary 20C99, 57M50}

\title{The scheme of characters in $\mathrm{SL}_2$}

\author{Michael Heusener}
\address{Universit\'e Clermont Auvergne, CNRS, Laboratoire de Math\'ematiques Blaise Pascal, F-63000 Clermont-Ferrand, France}
\email{michael.heusener@uca.fr.}

\author{Joan Porti} 
\thanks{Both authors partially supported by   
FEDER-AEI (grant number PID2021-125625NB-100 and 
Mar\'\i a de Maeztu Program CEX2020-001084-M) and by the Laboratoire de 
Math\'{e}matiques Blaise Pascal of the Universit\'{e} Clermont Auvergne}
\address{ Departament de Matem\`atiques, Universitat Aut\`onoma de Barcelona, 
08193 Cerdanyola del Vall\`es, Spain, and 
Centre de Recerca Matem\`atica (CRM) }
\email{porti@mat.uab.cat}

\date{\today}

\begin{document}
\begin{abstract} 
The aim of this article is to study the $\SL2$--character scheme of a finitely 
generated group.
Given a presentation of a finitely generated group $\Gamma$, we give equations defining the coordinate ring of
the scheme of $\SL2$--characters of $\Gamma$
(finitely many equations when $\Gamma$ is finitely presented).
We also study the scheme of abelian and non-simple representations and characters.
Finally we apply our results to study the $\SL2$--character scheme of the Borromean rings.

\end{abstract}
\maketitle

\section{Introduction}

For a finitely 
generated group $\Gamma$, we fix a  presentation
\begin{equation}\label{eqn:fpGroup}
\Gamma=\langle \gamma_1,\ldots,\gamma_n \mid r_l, \  l\in L \rangle
\end{equation}
with $L$ possible infinite. 
Let 
$$
\pi\co  \mathbb{F}_n \twoheadrightarrow \Gamma
$$
denote the natural surjection from the free group $\mathbb{F}_n$ on $n$ generators. Hence
the kernel of $\pi$ is the subgroup of $\mathbb{F}_n$ normally generated by the relations $r_l$, $l\in L$.  
Our first goal is  to describe the scheme of characters of $\Gamma$
in $\SL2$ from
the scheme of $\mathbb{F}_n$ and the presentation \eqref{eqn:fpGroup}.

The scheme of representations and characters in $\SL2$ are denoted respectively by $R(\Gamma,\SL2)$ and $X(\Gamma,\SL2)$.
The corresponding algebras of functions are the universal algebra of $\SL2$-representations
$$
A(\Gamma)=\CC [R(\Gamma,\SL2)]
$$
and the universal algebra of $\SL2$-characters
$$
B(\Gamma)=\CC [X(\Gamma,\SL2)]\cong A(\Gamma)^{\SL2}.
$$
We give the definitions in Section~\ref{sec:notation}.
The algebras $A(\Gamma)$ and $B(\Gamma)$ may be \emph{non-reduced}, i.e.~they may contain non-zero nilpotent elements 
(see the examples in 
Section~\ref{sec:examples}, for instance,
or the more general result of \cite{Kapovich-Millson} that provides the existence of
arbitrary
singularities). 
If instead of the scheme we consider the underlying variety,
then the algebra of functions of the variety is the quotient
$B(\Gamma)_{red} := B(\Gamma)/N(\Gamma)$, where $N(\Gamma)$ is the nilradical of $B(\Gamma)$.

For $\gamma\in \mathbb{F}_n$, let  $t_\gamma\in  B(\mathbb{F}_n)$ denote the the evaluation function of characters at $\gamma$.
\begin{Theorem} 
 \label{Thm:ideal}
 Let $\Gamma=\langle \gamma_1,\ldots,\gamma_n \mid r_l,\ l\in L \rangle$ be a finitely generated 
 group.  Then 
$$
 B(\Gamma)\cong B(\mathbb{F}_n)/\big(  t_{r_l}-2, \ t_{\gamma_i r_l}-t_{\gamma_i},\  t_{\gamma_j\gamma_k r_l}-t_{\gamma_j\gamma_k} \mid
l\in L,\ 1\leq i\leq n,\ 1 \leq j < k\leq n\big) 
 $$
\end{Theorem}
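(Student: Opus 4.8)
The plan is to combine the reductivity of $\SL2$ with the classical invariant theory of $2\times2$ matrices. Throughout, let $\rho\co\FF_n\to\SL2\bigl(A(\FF_n)\bigr)$ be the universal representation, write $X_i=\rho(\gamma_i)$ and $U_l=\rho(r_l)-\id\in M_2\bigl(A(\FF_n)\bigr)$, where $\id$ is the identity matrix.

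\emph{Step 1 (reduction to invariants of the defining ideal).} The presentation realizes $R(\Gamma,\SL2)$ as the closed subscheme of $R(\FF_n,\SL2)=\SL2^{\,n}$ on which every $\rho(r_l)$ equals $\id$; equivalently $A(\Gamma)\cong A(\FF_n)/J$, where $J$ is the ideal generated by the four entries of each $U_l$, $l\in L$. Since $\SL2$ is linearly reductive in characteristic zero, the functor of invariants is exact, so applying it to $0\to J\to A(\FF_n)\to A(\Gamma)\to 0$ gives
\[
B(\Gamma)=A(\Gamma)^{\SL2}\cong B(\FF_n)/\bigl(J\cap B(\FF_n)\bigr).
\]
Everything therefore reduces to exhibiting generators of the ideal $J\cap B(\FF_n)$.

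\emph{Step 2 (Reynolds averaging produces trace pairings).} Equip $M_2\bigl(A(\FF_n)\bigr)=M_2(\CC)\otimes_\CC A(\FF_n)$ with the $\SL2$-action combining conjugation on $M_2(\CC)$ with the action on $A(\FF_n)$, and let $\mathcal E$ be its module of invariants, the \emph{equivariant matrices}. Then $U_l\in\mathcal E$, and because the trace pairing is invariant, $F\mapsto\tr(U_lF)$ is an $\SL2$-equivariant map $M_2\bigl(A(\FF_n)\bigr)\to A(\FF_n)$; hence it intertwines the Reynolds operators $\mathsf R$ and $\mathsf R^{\star}$, giving
\[
\mathsf R\bigl(\tr(U_lF)\bigr)=\tr\bigl(U_l\,\mathsf R^{\star}(F)\bigr),\qquad \mathsf R^{\star}(F)\in\mathcal E .
\]
As every element of $J$ is a finite sum $\sum_l\tr(U_lF_l)$ and $J\cap B(\FF_n)=\mathsf R(J)$ (for $J$ is $\SL2$-stable), this shows that $J\cap B(\FF_n)$ is generated, as an ideal of $B(\FF_n)$, by the invariants $\tr(U_lV)$ with $V$ ranging over a generating set of the $B(\FF_n)$-module $\mathcal E$.

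\emph{Step 3 (generators of $\mathcal E$ and conclusion).} I claim $\mathcal E$ is generated over $B(\FF_n)$ by $\id$, the $X_i$, and the products $X_jX_k$ with $j<k$. The Cayley--Hamilton relation $X_i^2=t_{\gamma_i}X_i-\id$ eliminates squares, its polarization $X_jX_k+X_kX_j=t_{\gamma_j}X_k+t_{\gamma_k}X_j+(t_{\gamma_j\gamma_k}-t_{\gamma_j}t_{\gamma_k})\id$ allows reordering so that only $j<k$ occur, and a length-three word is recovered from the three equations obtained by applying the polarized identity to $X_a\cdot(X_bX_c)$ and its cyclic permutations, which express it as a $B(\FF_n)$-combination of shorter words; longer words reduce inductively. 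Evaluating the pairings of Step 2 on these generators, and using cyclicity of the trace, yields
\[
\tr(U_l\,\id)=t_{r_l}-2,\qquad\tr(U_lX_i)=t_{\gamma_i r_l}-t_{\gamma_i},\qquad\tr(U_lX_jX_k)=t_{\gamma_j\gamma_k r_l}-t_{\gamma_j\gamma_k},
\]
which are precisely the generators in the statement. Together with Steps 1 and 2 this proves the theorem.

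\emph{Main obstacle.} The substantive point is the generation statement of Step 3: that $\id$, the $X_i$ and the $X_jX_k$ already span the whole module of equivariant matrices over the invariant ring, with no longer words needed. This is the content of the first and second fundamental theorems for the simultaneous conjugation action on tuples of $2\times2$ matrices, and the reductions must be carried out for $\SL2$, where $\det X_i=1$; verifying that the relevant coefficients are genuinely regular functions (so that the reduction holds over $B(\FF_n)$ and not merely generically) is where I expect the real work to lie.
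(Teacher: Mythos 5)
Your proposal is correct, but it follows a genuinely different route from the paper's. The paper identifies $B(\Gamma)$ with the skein algebra of $\Gamma$ (Przytycki--Sikora), deduces $B(\Gamma)\cong B(\FF_n)/\bigl(t_\alpha-t_\beta \mid \pi(\alpha)=\pi(\beta)\bigr)$, and then shrinks this ideal combinatorially: normal generation of $\ker\pi$ reduces the generating set to the elements $\Theta(\alpha,r_l)=t_{\alpha r_l}-t_\alpha$, Vogt's four-letter trace identity shows that words $\alpha$ of length at most two suffice, and further trace identities remove inverses and order the indices. You work one level up, with equivariant matrices instead of trace functions: exactness of invariants gives $B(\Gamma)\cong B(\FF_n)/\bigl(J\cap B(\FF_n)\bigr)$, the Reynolds operator together with the trace pairing shows that $J\cap B(\FF_n)$ is generated by the pairings of $U_l$ against a generating set of the concomitant module $\mathcal E$, and Cayley--Hamilton with its polarization reduces such a generating set to $\id$, $X_i$, $X_jX_k$. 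The two proofs in fact manipulate the same elements, since $\Theta(\alpha,r_l)=\tr\bigl(U_l\,\rho(\alpha)\bigr)$, and your matrix identities project onto the paper's trace identities: Vogt's identity is essentially the trace of the polarized Cayley--Hamilton relation, and the division by $2$ needed to solve your cyclic $3\times3$ system (valid in characteristic $0$) is the factor $2$ appearing in the paper's identity for $2\Theta(abc,d)$. What your route buys is conceptual transparency about \emph{why} length-two words suffice; the paper's closing remark of this section cites exactly this fact (Fogg, Corollary~10.1.7) as motivation without using it. What it costs is the dependence on the first fundamental theorem for $2\times2$ matrix concomitants: in your Step~2 you need generators of the \emph{full} module $\mathcal E$ of equivariant matrices, not merely of the $B(\FF_n)$-span of words in the $X_i$, so you must invoke Procesi's theorem on tuples of $2\times2$ matrices (plus exactness once more to restrict from $M_2^{\,n}$ to $\SL2^{\,n}$, and the observation that conjugation factors through $\mathrm{PGL}_2$, so $\SL2$- and $\mathrm{GL}_2$-invariants agree). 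You flag this correctly as the substantive point; it is a citable classical result, and the comparison is fair since the paper's own starting isomorphism likewise rests on Procesi/Przytycki--Sikora, after which each argument is self-contained.
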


We compare this result to Theorem~3.2 in 
\cite{FicoMontesinos} 
by
Gonz\'alez-Acu\~ na and Montesinos-Amilibia, who  give a presentation
for the variety instead of the scheme.  
More precisely, they deal with the reduced algebra
and prove that  the reduction $B(\Gamma)_{red} $ of $B(\Gamma)$ is given by
$$
  B(\Gamma)_{red} \cong  B(\mathbb{F}_n)/ \operatorname{rad}\left(   t_{r_l}-2, \ t_{\gamma_i r_l}-t_{\gamma_i} \mid
      l\in L,\ 1\leq i\leq n \right) .
$$
Our motivation for Theorem~\ref{Thm:ideal} is that 
$ B(\Gamma)$ may be non-reduced and that, even when $B(\Gamma)$ is reduced, the 
radical in the ideal of Gonz\'alez-Acu\~ na and Montesinos-Amilibia
is needed,
see Example~\ref{ex:trefoil}).

In order to work with shorter words and to simplify computations,  we may 
write each  relation as a product of two words
\begin{equation}
\label{eqn:ruv}
r_l=u^{-1}_l v_l, \qquad \forall l\in L.
\end{equation}

\begin{Addendum} 
   Let $\Gamma=\langle \gamma_1,\ldots,\gamma_n \mid u_l =v_l, \ l\in L \rangle$ be a finitely generated 
 group.  Then
$$
 B(\Gamma)\cong B(\mathbb{F}_n)/
\big(  t_{v_l}-t_{u_l}, \ t_{\gamma_i v_l}-t_{\gamma_i u_l},\  t_{\gamma_j\gamma_k u_l}-t_{\gamma_j\gamma_k v_l} \mid
l\in L,\ 1\leq i\leq n,\ 1 \leq j < k\leq n\big).
 $$
  \end{Addendum}

The first step of the proof uses the isomorphism between $B(\Gamma)$ and the skein algebra of $\Gamma$, 
proved by Przytycki and Sikora in \cite{PrzytyckiSikora}, but that
follows also from a  result of Procesi \cite{Procesi87}. The proof uses trace 
identities of Vogt and Magnus to find the explicit presentation. 

In this paper we also define the scheme of \emph{non-simple} representations and characters
(some times called reducible representations, but we already use the word reduced
with another meaning). In Proposition~\ref{prop:abelianize} we prove that the scheme of 
non-simple characters is isomorphic to the scheme of characters of its abelianization.
We also describe the scheme of characters for abelian groups in Theorem~\ref{thm:abelian}.

In this article we are mainly interested in the field of complex numbers. 
Nevertheless, all results 
(in particular Theorem~\ref{Thm:ideal}) are 
valid over any algebraically closed field of characteristic $0$.

\begin{Remark}
 Recently G.~Miura and S.~Suzuki 
 have given a similar description of $B(\Gamma)$ when $\Gamma$
 has three generators 
\cite{MiuraSuzuki2021}. 
We discuss the differences between both results in 
Remark~\ref{rem:ideal}. 
\end{Remark}

The paper is organized as follows. Preliminaries on affine schemes are provided in
Section~\ref{sec:notation}, which can be skipped by readers familiar with schemes  
and used as a reference. Then in  Section~\ref{sec:schemereps} we discuss 
the scheme of representations, and the local properties of these are given in 
Section~\ref{section:local}. Section~\ref{sec:fpg} is devoted to the proof of Theorem~\ref{Thm:ideal}.
The schemes of non-simple and of abelian characters are studied respectively in 
Sections~\ref{sec:nonsimple} and \ref{sec:abelian}.
In Section~\ref{sec:examples} we provide examples of non-reduced character schemes of 
three-manifolds, and in the final section 
we compute the scheme of characters of the Borromean rings exterior.

\section{Preliminaries on affine algebraic schemes}
\label{sec:notation}

In this section we review the basic tools in affine algebraic schemes required. For more details see for instance 
\cite{EisenbudHarris,Milne,Mumford_red}. 

\subsection{Affine algebraic schemes} Let $A$ be a finitely generated, commutative $\CC$-algebra.
Recall  that $A$ is the quotient of a polynomial algebra by an ideal, i.e.~there exists $n\in\NN$ and an ideal $I\subset\CC[x_1,\ldots,x_n]$ such that $A\cong \CC[x_1,\ldots,x_n]/I$.
It follows from Zariski's lemma that there is a one-to-one correspondence between the maximal ideals $\mathfrak{m}\subset A$ and algebra morphisms $A\to\CC$. In what follows we let $\operatorname{Spm}(A)$ denote the maximal spectrum of $A$ i.e.
\[
\operatorname{Spm}(A) = \{ \mathfrak{m}\subset A\mid \mathfrak{m}\text{ is a maximal ideal.}\}
\]
We can think of elements of $A$ as complex-valued \emph{functions} on $\operatorname{Spm}(A)$ in the following way: if $f\in A$ and $\frak{m}\in \operatorname{Spm}(A)$, then $A/\frak{m}\cong \CC$ and $f(\frak{m}) := f \bmod \frak{m}$. Therefore, we have $f(\frak{m}) = 0$ if and only if $f\in \frak{m}\subset A$. Notice that we might have that $f(\frak{m}) =0$ for all $\frak{m}\in\operatorname{Spm}(A)$, but $f\neq 0$;
if his happens then $f\in A$ is nilpotent. Also each $\mathfrak{m}\in\operatorname{Spm}(A)$ determines, by evaluation, an algebra morphism $\phi_\mathfrak{m}\co A\to\CC$ given by
$\phi_\mathfrak{m}(f) = f(\mathfrak{m})$ for $f\in A$.
The two important properties are the following:
\begin{itemize}
\item The elements of $A$ separate points. Indeed, if $\frak{m},\frak{m}'\in\operatorname{Spm}(A)$ then
$\frak{m}\neq \frak{m}'$ implies that there exists $f\in \frak{m}$ and $f\not\in \frak{m}'$, and therefore $0=f(\frak{m})\neq f(\frak{m}')\neq0$. 
\item Every algebra morphism $\phi\co A \to \CC$ is the evaluation at a unique point i.e.
there exists a unique point $\frak{m} = \operatorname{Ker}(\phi) \in \operatorname{Spm}(A)$ such that for all $f\in A$ we have 
$\phi(f) = f(\frak{m})$.
\end{itemize}

For any subset $M$ of $A$ we define
\[
V(M) := \{\frak{m}\in\operatorname{Spm}(A)\mid  \frak{m}\supset M\}
= \{\frak{m}\in\operatorname{Spm}(A)\mid \forall f\in M,  f(\frak{m}) = 0\}\,.
\] 
It is clear that $V(M) = V(I)$ if $I$ is the ideal generated by $M$.
We will endow $\operatorname{Spm}(A)$ with the Zariski topology, where the closed sets are of the form
$V(I)$ for $I$ and ideal of $A$. Notice that $V(0)=\operatorname{Spm}(A)$ and $V(1) = \emptyset$. 
For more details see Appendix A in \cite{Milne}.

We will call a pair $X = (\operatorname{Spm}(A),A )$ an \emph{affine algebraic scheme}.
Given an affine algebraic scheme $X$, we define the \emph{coordinate algebra} of $X$ as  $\CC[X] := A$, and the \emph{underlying space} $|X| = \operatorname{Spm}(A)$.
In what follows it will be convenient to write $x\in|X|$ for an element in 
$|X|=\operatorname{Spm}(A)$. In this case the corresponding maximal ideal in $A$ will be denoted by~$\mathfrak{m}_x$.

\begin{Remark}
 It is worth noticing the word ``algebraic'' in the definition of \emph{affine algebraic scheme}. Affine schemes in general
 are constructed from the prime spectrum of a commutative ring with unity, though here we work with the maximal spectrum of a 
 finitely generated commutative $\mathbb C$-algebra.
 See \cite{Mumford_red} or \cite{EisenbudHarris}.
\end{Remark}

\begin{Example}
The $n$-dimensional affine space has the structure on an affine algebraic scheme
$\mathbb{A}^n : =(\CC^n,\CC[x_1,\ldots,x_n])$. A point $p=(p_1,\ldots,p_n)\in\CC^n$ corresponds to the maximal ideal $\mathfrak{m}_p=(x_1-p_1,\ldots,x_n-p_n)$, and each maximal ideal of
$\CC[x_1,\ldots,x_n]$ is of this form. The value of $f\in \CC[x_1,\ldots,x_n]$ at 
$\mathfrak{m}_p=(x_1-p_1,\ldots,x_n-p_n)$ is simply $f(p)$.
\end{Example}

\begin{Example}
 We consider the \emph{dual numbers} $A_2 := \CC[\epsilon]/(\epsilon^2)$. We have that
$\operatorname{Spm}(A_2) = \{\ast\}$ has only one point,  the maximal ideal $\ast:=(\epsilon)$.
The value of $f = a + b\,\epsilon\in A_2$ at the point $\ast$ is $f(\ast)=a$. Hence for $\epsilon \in A_2$ we have $\epsilon(\ast) = 0$, but $0\neq\epsilon$ and $\epsilon^2=0$ in $A_2$. The scheme $(\{\ast\},A_2)$ is called the \emph{double
point}, and similarly  $A_n := \CC[\epsilon]/(\epsilon^n)$ gives us a point of multiplicity $n$.
\end{Example}

A morphism between two affine algebraic schemes 
$X = (\operatorname{Spm}(A),A )$ and 
$Y= (\operatorname{Spm}(B),B )$
is a pair $(\alpha,\alpha^*)$  such that
$\alpha\co\operatorname{Spm}(A)\to \operatorname{Spm}(B)$ is a map, $\alpha^*\co B\to A$ is an algebra homomorphism satisfying 
\begin{equation}\label{eq:scheme_morph}
g(\alpha(\frak{m})) = \alpha^*(g)(\frak{m}) \quad\text{ for all $\frak{m}\in\operatorname{Spm}(A)$ and for all $g\in B$} 
\end{equation}
i.e.\ for all $g\in B$ the two maps $g\circ\alpha$ and $\alpha^*(g)$ are equal as maps on $|X|=\operatorname{Spm}(A)$. Notice that $\alpha$ is continuous in the Zariski-topology, for if $J\subset B$ is an ideal we have
\begin{align*}
\alpha^{-1} \big(V(J)\big) & =\{ \frak{m}\in \operatorname{Spm}(A) \mid \alpha(\frak{m}) \supset J\}\\
&= \{ \frak{m}\in \operatorname{Spm}(A) \mid \forall g \in J,\  \alpha^*(g)(\frak{m}) = g(\alpha(\frak{m})) =0\}\\
&= V\big(\alpha^*(J)\big)\,.
\end{align*}
In what follows we will write $\alpha\co X\to Y$ for a morphism between the schemes $X$ and $Y$. It is understood that $\alpha\co |X|\to|Y|$, and 
$\alpha^*\co\CC[Y]\to\CC[X]$ satisfy equation~\eqref{eq:scheme_morph} i.e.\ for all $g\in\CC[Y]$ and for all $x\in |X|$ we have $\alpha^*(g)(x) = g(\alpha(x))$.

\begin{Remark}
\begin{itemize}

\item An algebra homomorphism $\alpha^*\co B\to A$ determines a unique map
$\alpha\co\operatorname{Spm}(A)\to \operatorname{Spm}(B)$ in the following way:
for $\frak{m}\in\operatorname{Spm}(A)$ we consider the map $\phi_\frak{m}\co B\to\CC$ given by $\phi_\frak{m}(g) = \alpha^*(g)(\frak{m})$. This map is an algebra homomorphism and we can put  $\alpha(\frak{m}) := \operatorname{Ker}(\phi_\frak{m})$, and  we obtain 
$g(\alpha(\frak{m})) = \alpha^*(g)(\frak{m})$ for all $\frak{m}\in\operatorname{Spm}(A)$.
Notice that $\alpha(\mathfrak{m}) = (\alpha^*)^{-1}(\mathfrak{m})$ since 
$\operatorname{Ker}(\phi_\frak{m}) = (\alpha^*)^{-1}(\mathfrak{m})$.

\item The map $\alpha\co\operatorname{Spm}(A) \to \operatorname{Spm}(B)$ does not determine $\alpha^*$ uniquely. 
This only happens if $A$ has no nilpotent elements.
\end{itemize}
\end{Remark}

A scheme morphism 
$\alpha \co X\to Y$ is called a \emph{closed immersion} 
if the underlying continuous map $\alpha\co|X|\to|Y|$ is a homeomorphism between $|X|$ and a closed subset of $|Y|$, and the algebra homomorphism 
$\alpha^*\co \CC[Y]\to \CC[X]$ is surjective.

\subsection{Tangent space}
Let $X$ be an affine algebraic scheme.
For every $x\in|X|$, the quotient $\mathfrak{m}_x/\mathfrak{m}_x^2$ is a finite dimensional
$\CC$-vector space. By definition its dual space is the tangent space of $X$ at $x$ which will be denoted by
\[
T_x (X) := \big(\mathfrak{m}_x/\mathfrak{m}_x^2\big)^*\,.
\]

\begin{Lemma}\label{lem:tangent-scheme}
Let $\mathfrak{m}\in\operatorname{Spm}(A)$. 
Then there is a one-to-one correspondence between linear forms $\ell\in (\mathfrak{m}/\mathfrak{m}^2)^*$ and  scheme morphisms 
$(\alpha,\alpha^*) \co (\operatorname{Spm}(A_2),A_2)\to (\operatorname{Spm}(A),A)$ such that $\alpha(*) =\mathfrak{m}$.
\end{Lemma}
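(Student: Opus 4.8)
The plan is to reduce the statement to a purely algebraic bijection between derivations and linear forms on $\mathfrak{m}/\mathfrak{m}^2$, using crucially that the source $\operatorname{Spm}(A_2)$ is a single point. First I would invoke the first bullet of the Remark following the definition of a scheme morphism: a morphism $(\alpha,\alpha^*)$ is completely determined by its algebra homomorphism $\alpha^*\co A\to A_2$, since the underlying map is forced to be $\mathfrak{m}'\mapsto(\alpha^*)^{-1}(\mathfrak{m}')$. As the only maximal ideal of $A_2$ is $*=(\epsilon)$, the condition $\alpha(*)=\mathfrak{m}$ reads $(\alpha^*)^{-1}\big((\epsilon)\big)=\mathfrak{m}$. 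Thus the set of scheme morphisms with $\alpha(*)=\mathfrak{m}$ is in natural bijection with the set of algebra homomorphisms $\alpha^*\co A\to A_2$ satisfying $(\alpha^*)^{-1}\big((\epsilon)\big)=\mathfrak{m}$, and it suffices to parametrize these.

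Next I would decompose such homomorphisms. Writing $A_2=\CC\oplus\CC\,\epsilon$, any linear $\alpha^*\co A\to A_2$ has the form $\alpha^*(f)=\phi(f)+\delta(f)\,\epsilon$ with $\phi,\delta\co A\to\CC$ linear. Composing with the projection $A_2\to\CC$, $a+b\epsilon\mapsto a$, shows $\phi$ is an algebra homomorphism with $\ker\phi=(\alpha^*)^{-1}\big((\epsilon)\big)=\mathfrak{m}$; since a $\CC$-algebra homomorphism $A\to\CC$ with kernel $\mathfrak{m}$ must factor as the identity through $A/\mathfrak{m}\cong\CC$, this forces $\phi=\phi_\mathfrak{m}$, the evaluation at $\mathfrak{m}$. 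Expanding $\alpha^*(fg)=\alpha^*(f)\alpha^*(g)$ and using $\epsilon^2=0$ then shows that $\alpha^*$ is multiplicative if and only if $\delta$ satisfies the Leibniz rule $\delta(fg)=f(\mathfrak{m})\,\delta(g)+g(\mathfrak{m})\,\delta(f)$, i.e.\ $\delta$ is a derivation from $A$ to $\CC$ for the $A$-module structure on $\CC$ induced by $\phi_\mathfrak{m}$ (unitality forces $\delta(1)=0$, which also follows from Leibniz). Hence morphisms with $\alpha(*)=\mathfrak{m}$ correspond bijectively to such derivations $\delta$.

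Finally I would match these derivations with $(\mathfrak{m}/\mathfrak{m}^2)^*$. Given $\delta$, linearity and $\delta(1)=0$ give $\delta|_\CC=0$, and for $f,g\in\mathfrak{m}$ we have $f(\mathfrak{m})=g(\mathfrak{m})=0$, so $\delta(fg)=0$; thus $\delta$ kills $\mathfrak{m}^2$ and its restriction descends to $\ell\in(\mathfrak{m}/\mathfrak{m}^2)^*$. Conversely, the vector-space splitting $A=\CC\cdot 1\oplus\mathfrak{m}$ (from $A/\mathfrak{m}\cong\CC$, with $f=f(\mathfrak{m})\cdot 1+(f-f(\mathfrak{m})\cdot 1)$) lets me extend any $\ell\in(\mathfrak{m}/\mathfrak{m}^2)^*$ to $\delta\co A\to\CC$ by $\delta(c\cdot 1+m):=\ell(m+\mathfrak{m}^2)$; computing $(c+m)(c'+m')$ modulo $\mathfrak{m}^2$ verifies the Leibniz rule, so $\delta$ is a $\phi_\mathfrak{m}$-derivation. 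These two $\CC$-linear assignments are mutually inverse, which chained with the previous reductions yields the claimed one-to-one correspondence.

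The main obstacle is bookkeeping rather than difficulty: the delicate point is to see that the constraint $\alpha(*)=\mathfrak{m}$ rigidifies the ``constant part'' of $\alpha^*$ exactly to $\phi_\mathfrak{m}$, so that the entire remaining freedom in the morphism is carried by the derivation $\delta$. Once this is pinned down, the facts that a derivation vanishes on $\CC$ and on $\mathfrak{m}^2$, and that forms on $\mathfrak{m}/\mathfrak{m}^2$ extend back to derivations via the splitting, are straightforward verifications.
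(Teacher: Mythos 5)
Your proof is correct and follows essentially the same route as the paper: you decompose $\alpha^*$ as $\phi_\mathfrak{m}+\epsilon\,\delta$, identify the $\epsilon$-part as a point derivation via the Leibniz rule, and pass between derivations and forms on $\mathfrak{m}/\mathfrak{m}^2$ — your extension $\delta(c\cdot 1+m):=\ell(m+\mathfrak{m}^2)$ is exactly the paper's formula $\alpha_1^*(f):=\ell\bigl(f-f(\mathfrak{m})\bigr)$ rewritten through the splitting $A=\CC\cdot 1\oplus\mathfrak{m}$. The only (welcome) addition is that you explicitly check the two assignments are mutually inverse, a point the paper leaves implicit.
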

\begin{proof}
Recall that
$\operatorname{Spm}(A_2) = \{\ast\}$ has only one point  which corresponds to the maximal ideal $(\epsilon)$.
We consider the map
$\alpha\co\operatorname{Spm}(A_2)\to \operatorname{Spm}(A)$ given by
$\alpha(\ast) = \mathfrak{m}$. 
In order to get a morphism of schemes, we are looking for an algebra homomorphism 
$\alpha^*\co A\to A_2$ which satisfies equation~\eqref{eq:scheme_morph}. That is 
\[
\alpha^*\co A\to A_2 \text{ given by } \alpha^*(f) = \alpha_0^*(f) + \epsilon\,\alpha_1^*(f)
\]
where $\alpha_i^*\co A\to\CC$ are $\CC$-linear maps, such that
\[
\alpha^*(f)(\ast) = \alpha_0^*(f)=f(\alpha(\ast)) = f(\mathfrak{m})\,,
\] 
and 
\begin{equation}\label{eq:tangent}
\forall\ f_1,f_2\in A, \quad \alpha_1^*(f_1 f_2) = f_1(\mathfrak{m}) \alpha_1^*(f_2) + \alpha_1^*(f_1) f_2(\mathfrak{m})\,.
\end{equation}
Notice that equation~\eqref{eq:tangent} implies that $\alpha_1^*(1) =0$, and hence $\alpha_1^*|_\CC \equiv 0$.

Let $\ell\in  (\mathfrak{m}/\mathfrak{m}^2)^*$ be given. We define a linear map 
$\alpha_1^*\co A\to \CC$ satisfying equation \eqref{eq:tangent} by putting $\alpha_1^*(f) := \ell(f-f(\mathfrak{m}))$. Indeed,
\begin{align*}
\alpha_1^*(f_1 f_2) &= \ell(f_1f_2-f_1(\mathfrak{m})f_2(\mathfrak{m})) \\
&= \ell( (f_1- f_1(\mathfrak{m})) (f_2-f_2(\mathfrak{m}) )) + f_2(\mathfrak{m})\ell(f_1 - f_1(\mathfrak{m}) ) + f_1(\mathfrak{m}) \ell(f_2 - f_2(\mathfrak{m}) ) \,,
\end{align*}
and equation~\eqref{eq:tangent} follows since $(f_1- f_1(\mathfrak{m})) (f_2-f_2(\mathfrak{m})) \in \mathfrak{m}^2$ and
$\ell|_{\frak{m}^2} \equiv 0$.

Let 
$(\alpha,\alpha^*) \co (\operatorname{Spm}(A_2),A_2)\to (\operatorname{Spm}(A),A)$ 
be a  scheme morphisms such that $\alpha(*) =\mathfrak{m}$ i.e.\ for all $f\in A$
$\alpha^*(f) = f(\mathfrak{m}) + \epsilon\, \alpha_1^*(f)$ where $\alpha_1^*\co A\to\CC$ is linear and satisfies equation~\eqref{eq:tangent}. The restriction $\alpha_1^*|_\mathfrak{m}$ is linear, and by equation~\eqref{eq:tangent} $\alpha_1^*$ vanishes on $\mathfrak{m}^2$.
Hence $\alpha_1^*$ defines a linear form on $\mathfrak{m}/\mathfrak{m}^2$.
\end{proof}

\subsection{Closed subschemes} Let $I\subset A$ be an ideal, and we let $\pi\co A\to A/I$ denote the projection.
We obtain the affine algebraic scheme $(\operatorname{Spm}(A/I), A/I)$. Notice that there is a one-to-one correspondence between the ideals in  $A/I$ and the ideals in $A$ which contain~$I$.
Hence, there is a natural scheme morphism 
\[
(\alpha, \alpha^*)\co (\operatorname{Spm}(A/I), A/I )\to (\operatorname{Spm}(A), A )\quad\text{given by  
$\alpha (\overline{\mathfrak{m}}) = \pi^{-1}(\overline{\mathfrak{m}})$ and $\alpha^* =\pi$.}
\]
We have $\operatorname{Im}(\alpha)= V(I)$, and $\alpha\co\operatorname{Spm}(A/I)\to V(I)$ a homeomorphism since it is closed; that is
for every ideal $\bar{J}\subset A/I$ we have
\[
\alpha ( V(\bar{J}) ) = V(I)\cap V\big( \pi^{-1}( \bar{J} ) \big)\,.
\]

We endow the closed set $V(I)$ with the ring $A/I$, and we call the pair
$(V(I), A/I)$ a \emph{closed subscheme} of $(\operatorname{Spm}(A), A)$.
In
contrast to classical algebraic geometry, the closed subschemes
of $(\operatorname{Spm}(A), A )$ and the ideals of $A$ are in one-to-one correspondence.
It follows that every affine algebraic scheme is isomorphic to a closed subscheme of some affine space~$\mathbb{A}^n$.

For two closed subschemes $(V(I_k), A/I_k)$, $k = 1, 2$, the subscheme
$(V(I_1\cap I_2), A/(I_1\cap I_2))$
 is called their \emph{union} and $(V(I_1+ I_2), A/(I_1+ I_2))$ is called
their \emph{intersection}. Since
and
these are actually set-theoretic unions and intersections of the underlying spaces. See Section~I.2.1 in \cite{EisenbudHarris}. 

We call an affine algebraic scheme $X$ \emph{reduced} if $\CC[X]$ is reduced.
If $N\subset \CC[X]$ denotes the nilradical of $\CC[X]$, then the algebra 
$\CC[X]_{red} := \CC[X]/N$ is reduced. Moreover, $N$ is the intersection of all maximal ideals of $A$. 
Hence $V(N)=V(0) = |X|$, and it follows that 
$X_{red}:= (|X|, \CC[X]_{red})$ is a reduced, closed subscheme of $X$.

\subsection{Local to global properties} 
For a finitely generated algebra $A$ we let  $N(A)$ denote its nilradical. 
For an ideal $I\subset A$ we let $\operatorname{Ann}(I):=\{a\in A\mid a I = 0\}$ denote the \emph{annihilator} of $I$.
The point of $\operatorname{Spm}(A)$ represented by a maximal ideal $\mathfrak{m}$ is called \emph{reduced} if the local ring $A_\mathfrak{m}$ is reduced. The non-reduced points of
$\operatorname{Spm}(A)$ form a Zariski closed subset (which might be empty). This follows from the the following lemma.

\begin{Lemma}
\label{lem:non-reduced_closed} Let $A$ be a finitely generated, commutative $\CC$-algebra $A$, $I\subset A$ an ideal, and $\frak{m}\subset A$ a maximal ideal.  Then
\begin{enumerate}
\item $N(A_\frak{m}) = N(A)_\frak{m}$;
\item $ V\big(\operatorname{Ann}(I)\big) =\{ \frak{m}\in \operatorname{Spm}(A)\mid I_\frak{m} \neq 0\}$;
\item The set of non-reduced points in  $\operatorname{Spm}(A)$ is Zariski-closed.
\end{enumerate}
\end{Lemma}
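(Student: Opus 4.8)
**The plan is to prove the three statements in sequence, with the first serving as the technical foundation for the other two.**

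For part (1), the claim is that localization commutes with taking the nilradical. I would prove the two inclusions separately. The inclusion $N(A)_\mathfrak{m} \subseteq N(A_\mathfrak{m})$ is immediate: if $a \in A$ is nilpotent, say $a^k = 0$, then its image $a/1 \in A_\mathfrak{m}$ also satisfies $(a/1)^k = 0$, so any element of $N(A)_\mathfrak{m}$ is nilpotent in $A_\mathfrak{m}$. For the reverse inclusion, suppose $a/s \in A_\mathfrak{m}$ is nilpotent, so $(a/s)^k = 0$ in $A_\mathfrak{m}$. This means there exists $u \notin \mathfrak{m}$ with $u\, a^k = 0$ in $A$. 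The natural move is to pass to the annihilator: $u \in \operatorname{Ann}(a^k A)$, or more directly $u a^k = 0$. I would then argue that $a/s$ lies in $N(A)_\mathfrak{m}$ by showing the element $ua$ is nilpotent in $A$ up to a unit in the localization — concretely, since $u a^k = 0$, multiplying by appropriate powers shows $(ua)^k = u^k a^k = u^{k-1}(ua^k)=0$, so $ua \in N(A)$, and hence $a/s = (ua)/(us)$ with $us \notin \mathfrak{m}$ lies in $N(A)_\mathfrak{m}$. This uses only the standard fact that localization is exact and nilpotency descends through the defining relation of the localization.

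For part (2), I would unwind both sides directly in terms of the localization functor. A maximal ideal $\mathfrak{m}$ lies in $V(\operatorname{Ann}(I))$ exactly when $\operatorname{Ann}(I) \subseteq \mathfrak{m}$. The key equivalence is that $I_\mathfrak{m} \neq 0$ if and only if $\operatorname{Ann}(I) \subseteq \mathfrak{m}$. For one direction: if $\operatorname{Ann}(I) \not\subseteq \mathfrak{m}$, pick $s \in \operatorname{Ann}(I) \setminus \mathfrak{m}$; then $s$ becomes a unit in $A_\mathfrak{m}$ and annihilates all of $I$, forcing $I_\mathfrak{m} = 0$. Conversely, if $I_\mathfrak{m} = 0$, then using that $A$ is finitely generated (hence $I$ is finitely generated, say by $x_1, \ldots, x_m$), each $x_j/1 = 0$ in $A_\mathfrak{m}$ gives some $s_j \notin \mathfrak{m}$ with $s_j x_j = 0$; the product $s = s_1 \cdots s_m \notin \mathfrak{m}$ then annihilates $I$, so $s \in \operatorname{Ann}(I) \setminus \mathfrak{m}$, whence $\operatorname{Ann}(I) \not\subseteq \mathfrak{m}$. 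The finite generation of $I$ is what makes this work and is the one place I must invoke it explicitly.

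For part (3), I would combine the first two parts. A point $\mathfrak{m}$ is non-reduced precisely when $A_\mathfrak{m}$ has a nonzero nilpotent, i.e.\ $N(A_\mathfrak{m}) \neq 0$. By part (1) this equals $N(A)_\mathfrak{m} \neq 0$. Applying part (2) with the ideal $I = N(A)$, the set of such $\mathfrak{m}$ is exactly $V(\operatorname{Ann}(N(A)))$, which is Zariski-closed by definition of the topology. I expect the main obstacle to be part (1): the forward inclusion $N(A_\mathfrak{m}) \subseteq N(A)_\mathfrak{m}$ requires care in tracking denominators and in justifying that a locally-nilpotent element is, after clearing a unit denominator, globally nilpotent. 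The other steps are formal once the annihilator description in part (2) is in hand, and both crucially rely on $A$ being Noetherian (finitely generated over $\CC$), so that all relevant ideals are finitely generated.
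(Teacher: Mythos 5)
Your proof is correct and follows essentially the same route as the paper: the same $ta$-trick (your $ua$) for the nontrivial inclusion $N(A_\mathfrak{m})\subseteq N(A)_\mathfrak{m}$ in part (1), the same quantifier swap via finite generation of $I$ (Noetherianness of $A$) in part (2), and the same combination with $I=N(A)$ in part (3). No gaps to report.
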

\begin{proof}
\begin{enumerate}
\item If $\frac{a}{s}\in N(A_\mathfrak{m})$  then there exists $n\in\mathbb{N}$ such that $\big(\frac{a}{s}\big)^n = \frac{a^n}{s^n} =\frac01$. Hence there exists $t\in A\setminus\mathfrak{m}$ such that $a^nt =0$ in $A$. Hence $(ta)^n=0$ in $A$, and
$\frac{a}{s}= \frac{at}{st}\in N(A)_\mathfrak{m}$. The other inclusion is obvious.

\item We have 
\[
\mathfrak{m} \not\in V\big(\operatorname{Ann}(I)\big) \Leftrightarrow
 \mathfrak{m}\not\supset \operatorname{Ann}(I) \Leftrightarrow \exists\, s\in A\smallsetminus \mathfrak m, \, \forall\, b\in I \ s\, b =0
\]
 and
\[
I_\mathfrak{m} \neq 0 \Leftrightarrow \forall \, b \in I,\, \exists s\in A\smallsetminus\mathfrak{m} \ s\,b = 0
\] 
Clearly the first statement implies the second. But also the second implies the first since $A$ is noetherian.  For if $I=(b_1,\ldots,b_k)$ and $s_i \in A\smallsetminus\mathfrak{m}$ such that $s_ib_i = 0$ then for the product  $s=s_1\cdots s_k$ we have $sb=0$ for all $b\in I$.

\item We have $N(A_\frak{m})  =  N(A)_\frak{m} \neq 0$ if and only $\frak{m}\in V\big(\operatorname{Ann}(N(A))\big)$. Hence the non-reduced points $\frak{m}\in\operatorname{Spm}(A)$ form the Zariski-closed set $V\big(\operatorname{Ann}(N(A))\big)$.\qedhere
\end{enumerate}
\end{proof}

The first local to global property is the following (see  \cite[Corollary~5.19]{MilneCA}):
\begin{Lemma} \label{lem:loc_to_glob_red}
Let $A$ be a ring. Then $A$ is reduced if and only if for every maximal ideal $\mathfrak{m}\subset A$ the local ring
$A_\mathfrak{m}$ is reduced.
\end{Lemma}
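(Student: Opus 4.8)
The plan is to reduce both implications to a single statement about the nilradical $N(A)$, and then to invoke the standard local-to-global principle for modules. The key observation is that $A$ is reduced precisely when $N(A)=0$, that being zero is a local condition, and that forming the nilradical commutes with localisation. I will use throughout the identity $N(A)_\mathfrak{m}=N(A_\mathfrak{m})$, whose proof is exactly the computation carried out for Lemma~\ref{lem:non-reduced_closed}(1); note that that argument never uses finite generation, so it is valid for an arbitrary commutative ring $A$.

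For the forward implication, suppose $A$ is reduced, so that $N(A)=0$. Then for every maximal ideal $\mathfrak{m}$ we get $N(A_\mathfrak{m})=N(A)_\mathfrak{m}=0$, so each local ring $A_\mathfrak{m}$ is reduced. For the converse, assume $A_\mathfrak{m}$ is reduced for every maximal ideal $\mathfrak{m}$; the goal is to show $N(A)=0$. Reading the identity $N(A)_\mathfrak{m}=N(A_\mathfrak{m})$ in the other direction, the hypothesis gives $N(A)_\mathfrak{m}=0$ for every $\mathfrak{m}$. At this point I apply the local-to-global principle: an $A$-module $M$ with $M_\mathfrak{m}=0$ for all maximal ideals $\mathfrak{m}$ is itself the zero module. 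Taking $M=N(A)$ forces $N(A)=0$, and hence $A$ is reduced.

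The substance of the argument, and the step I expect to require the most care, is the justification of this last principle. I would prove it by the annihilator argument: if some $x\in M$ were nonzero, then the ideal $\operatorname{Ann}(x)=\{a\in A\mid ax=0\}$ would be proper, hence contained in some maximal ideal $\mathfrak{m}$; but $M_\mathfrak{m}=0$ means $x/1=0$ in $M_\mathfrak{m}$, so $sx=0$ for some $s\notin\mathfrak{m}$, giving $s\in\operatorname{Ann}(x)\subseteq\mathfrak{m}$, a contradiction. Therefore $M=0$. This is precisely the mechanism already exploited in the proof of Lemma~\ref{lem:non-reduced_closed}(2): there $V(\operatorname{Ann}(N(A)))=\emptyset$ is shown to force $\operatorname{Ann}(N(A))=A$, hence $1\in\operatorname{Ann}(N(A))$ and $N(A)=0$. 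Consequently, in the finitely generated setting of that lemma one may bypass the general principle entirely and simply quote Lemma~\ref{lem:non-reduced_closed}(2) together with part (1); the only reason to phrase things via the module principle is to cover the arbitrary ring $A$ in the statement.
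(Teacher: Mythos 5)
Your proof is correct. Note that the paper does not actually prove this lemma --- it is stated with a citation to \cite[Corollary~5.19]{MilneCA} --- so there is no internal argument to compare against; what you have written is precisely the standard proof found in such references: reduce both directions to the identity $N(A)_\mathfrak{m}=N(A_\mathfrak{m})$, and then invoke the local-to-global vanishing principle for modules, proved by the annihilator-of-an-element argument. Two points in your write-up deserve emphasis because they are exactly right and easy to get wrong. First, the computation establishing $N(A)_\mathfrak{m}=N(A_\mathfrak{m})$ in Lemma~\ref{lem:non-reduced_closed}(1) indeed never uses finite generation, so it applies to an arbitrary commutative ring. Second, your distinction between the element-wise annihilator $\operatorname{Ann}(x)$ and the annihilator of the whole module is the crucial one: the paper's Lemma~\ref{lem:non-reduced_closed}(2) needs $A$ noetherian (to pass from ``for each $b\in I$ there is $s\notin\mathfrak{m}$ killing $b$'' to a single $s$ killing all of $I$), whereas working with one element $x$ at a time requires no finiteness hypothesis, so your proof genuinely covers the statement ``Let $A$ be a ring'' and not merely the finitely generated $\CC$-algebras to which Lemma~\ref{lem:non-reduced_closed} applies. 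The only implicit ingredient you might flag is that the existence of a maximal ideal containing the proper ideal $\operatorname{Ann}(x)$ uses Zorn's lemma in the non-noetherian setting, which is standard.
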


Let $A$ be a subring of $B$. An element $b\in B$ is called \emph{integral} over $A$ if it is a root of a monic polynomial with coefficients in $A$. An integral domain $A$ is called \emph{integrally closed} or  \emph{normal} if it is integrally closed in its field of fractions $F$ i.e.\
$\alpha \in F$, and $\alpha$ integral over $A$ implies 
$\alpha\in A$. There is an other local to global property
(see \cite[Prop.~6.16]{MilneCA}:
\begin{Lemma} \label{lem:loc_to_glob_int}
Let $A$ be an integral domain. Then 
$A$ is normal if and only if for every maximal ideal 
$\mathfrak m\subset A$ the localization 
$A_{\mathfrak m}$ is normal. 
\end{Lemma}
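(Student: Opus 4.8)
The plan is to reduce both directions to two standard facts about an integral domain $A$ with field of fractions $F$: first, that forming the integral closure commutes with localization, and second, that $A = \bigcap_{\mathfrak m} A_{\mathfrak m}$, the intersection being taken inside $F$ over all maximal ideals $\mathfrak m$. Note that each localization $A_{\mathfrak m}$ is again an integral domain whose field of fractions is the same $F$, so normality of $A_{\mathfrak m}$ is measured relative to the common field $F$; I would write $\bar A$ for the integral closure of $A$ in $F$, so that $A$ is normal precisely when $\bar A = A$.

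For the forward implication I would first prove the localization lemma: for any multiplicative subset $S \subset A \setminus \{0\}$, the integral closure of $S^{-1}A$ in $F$ equals $S^{-1}\bar A$. The inclusion $S^{-1}\bar A \subseteq \overline{S^{-1}A}$ is immediate, since dividing a monic integral dependence relation for $b \in \bar A$ by a power of $s$ yields a monic relation for $b/s$ with coefficients in $S^{-1}A$. For the reverse inclusion I would take $\alpha \in F$ integral over $S^{-1}A$, let $s \in S$ be the product of the (finitely many) denominators of the coefficients, and multiply the relation through by $s^n$; the resulting relation is monic in $s\alpha$ with coefficients in $A$, so $s\alpha \in \bar A$ and hence $\alpha \in S^{-1}\bar A$. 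Applying this with $S = A \setminus \mathfrak m$ and using $\bar A = A$ shows that each $A_{\mathfrak m}$ is normal.

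For the converse I would first establish $A = \bigcap_{\mathfrak m} A_{\mathfrak m}$. One inclusion is trivial; for the other, given $x \in \bigcap_{\mathfrak m} A_{\mathfrak m}$, I would consider the conductor ideal $I = \{a \in A \mid ax \in A\}$. For each maximal ideal $\mathfrak m$, writing $x = a/s$ with $s \notin \mathfrak m$ shows $s \in I \setminus \mathfrak m$, so $I$ is contained in no maximal ideal and therefore $I = A$; thus $1 \in I$ and $x \in A$. Now if $\alpha \in F$ is integral over $A$, the same monic relation shows it is integral over every $A_{\mathfrak m}$, so normality of $A_{\mathfrak m}$ gives $\alpha \in A_{\mathfrak m}$ for each $\mathfrak m$; hence $\alpha \in \bigcap_{\mathfrak m} A_{\mathfrak m} = A$, proving $\bar A = A$.

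The only genuinely delicate point is the denominator-clearing step in the localization lemma: one must keep track of exactly which power of $s$ is needed so that the transformed relation remains monic with all coefficients in $A$. Everything else is routine bookkeeping, and this structure parallels the reducedness criterion recorded in Lemma~\ref{lem:loc_to_glob_red}. I expect that single verification to be the main, and rather mild, obstacle.
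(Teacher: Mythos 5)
Your proof is correct; note that the paper does not prove this lemma at all but simply cites it as \cite[Prop.~6.16]{MilneCA}, and your argument is precisely the standard one given there: integral closure commutes with localization (yielding the forward direction), together with the identity $A=\bigcap_{\mathfrak m}A_{\mathfrak m}$ inside the fraction field $F$ (yielding the converse). All the delicate points — the denominator-clearing that keeps the integral dependence relation monic, and the conductor-ideal argument showing $\bigcap_{\mathfrak m}A_{\mathfrak m}\subseteq A$ — are handled correctly, so there is nothing to fix.
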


\subsubsection{Tangent cone}
Let $B$ be a noetherian, local ring with maximal ideal $\mathfrak{m}$. In this situation we have by Krull's intersection theorem that 
\begin{equation}\label{eq:Krull}
 \displaystyle\bigcap_{n\in\mathbb{N}} \mathfrak{m}^n =\{0\},
\end{equation}
and we will define the associated \emph{associated graded ring of $B$} as 
\[ \mathrm{gr}(B)
= B / \mathfrak{m} \oplus  \mathfrak{m}/ \mathfrak{m}^2 \oplus \mathfrak{m}^2/ \mathfrak{m}^3 \oplus \cdots \,.
\]
The multiplication of two \emph{homogeneous} elements 
$\bar a \in \mathfrak{m}^{n}/\mathfrak{m}^{n+1}$ and
$\bar b \in \mathfrak{m}^{k}/\mathfrak{m}^{k+1}$ is defined as follows: 
\[
 \bar {a} \cdot \bar {b} := a\,b \bmod \mathfrak{m}^{n+k+1}
\]
where $a$ and $b$ are elements of $B$ representing 
$\bar a$ and $\bar b$ respectively.

For any finitely generated, commutative $\CC$-algebra $A$ the \emph{tangent cone} of the schema 
$X=(\operatorname{Spm}(A), A)$ at $x\in X$ is the schema 
\[
\big(\operatorname{Spm}(\mathrm{gr}(A_\mathfrak{m})),\mathrm{gr}(A_\mathfrak{m})\big)\text{ where }\mathfrak m = \mathfrak{m}_x\,.
\]

A local noetherian ring $B$ with maximal ideal $\mathfrak m$ inherits some properties of its associated graded algebra $\operatorname{gr}(B)$.
For example, if $\operatorname{gr}(B)$ is reduced, normal or regular, then so is the local ring $B$.
More precisely, it follows from \eqref{eq:Krull} that for $a\in B$, $a\neq0$,  there is a minimal $n$ such that 
$a\in \mathfrak{m}^n$, $a\not\in\mathfrak{m}^{n+1}$, 
and we define the \emph{initial term}  
$\operatorname{in}(a) \in \mathrm{gr}(B)$ of $a$ as
\[
\operatorname{in}(a) : = a \bmod \mathfrak{m}^{n+1}\in \mathfrak{m}^{n}/\mathfrak{m}^{n+1}\subset
\mathrm{gr}(B)\,.
\]
It is easy to see that
$
\operatorname{in} ( a b ) = \operatorname{in} ( a )\cdot \operatorname{in} ( b )
$.
This implies 
\begin{Proposition}\label{prop:reduced_normal}
Let $B$ be a local noetherian ring with maximal ideal $\mathfrak m$. 
\begin{enumerate}
\item If the associated graded algebra $\operatorname{gr}(B)$ is reduced, then $B$ is reduced.
\item If the associated graded algebra $\operatorname{gr}(B)$ is normal, then $B$ is normal.
\end{enumerate}
\end{Proposition}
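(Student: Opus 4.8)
The plan is to base everything on the initial-form map $\operatorname{in}$ and its multiplicativity $\operatorname{in}(ab)=\operatorname{in}(a)\operatorname{in}(b)$, which holds whenever the right-hand side is non-zero (this is immediate from the definition of the product in $\operatorname{gr}(B)$ together with Krull's theorem \eqref{eq:Krull}, which guarantees that $\operatorname{in}(a)$ is defined for every $a\neq 0$). Throughout I write $v(a)=n$ for the order of $a$, i.e.\ $a\in\mathfrak m^{n}\setminus\mathfrak m^{n+1}$, and $v(0)=\infty$.

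For part (1), assume $\operatorname{gr}(B)$ is reduced and let $a\in B$ with $a\neq0$. Then $\operatorname{in}(a)\neq0$, and since $\operatorname{gr}(B)$ has no nilpotents we have $\operatorname{in}(a)^{k}\neq0$ for every $k$. I would then prove $\operatorname{in}(a^{k})=\operatorname{in}(a)^{k}$ by induction on $k$: the inductive step follows from multiplicativity, which applies precisely because $\operatorname{in}(a^{k})\operatorname{in}(a)=\operatorname{in}(a)^{k+1}\neq0$. Hence $\operatorname{in}(a^{k})\neq0$, so $a^{k}\neq0$ for all $k$; thus $B$ has no non-zero nilpotents and is reduced. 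This part is routine.

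For part (2), assume $\operatorname{gr}(B)$ is normal, hence an integral domain. Then $\operatorname{in}(a)\operatorname{in}(b)\neq0$ for all non-zero $a,b$, so multiplicativity holds without restriction; in particular $ab\neq0$, so $B$ is a domain, and $v$ becomes a valuation on the fraction field $F=\operatorname{Frac}(B)$, with $\operatorname{in}$ extending to a multiplicative map $F^{\times}\to\operatorname{Frac}(\operatorname{gr}(B))$ by $\operatorname{in}(a/b)=\operatorname{in}(a)/\operatorname{in}(b)$. Let $\alpha\in F$ be integral over $B$, say $\alpha^{d}+c_{d-1}\alpha^{d-1}+\dots+c_{0}=0$ with $c_{i}\in B$. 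Comparing $v$-values in $\alpha^{d}=-\sum_{i<d}c_{i}\alpha^{i}$ (using $v(c_{i})\geq0$) forces $v(\alpha)\geq0$. The heart of the argument is the following \emph{Key Lemma}: $\operatorname{in}(\alpha)$ is integral over $\operatorname{gr}(B)$, and therefore lies in $\operatorname{gr}(B)$ by normality. Granting this, I would run a successive approximation: the Key Lemma gives $\operatorname{in}(\alpha)\in\operatorname{gr}(B)$, which I lift to some $b_{1}\in B$ with $\operatorname{in}(b_{1})=\operatorname{in}(\alpha)$, so that $v(\alpha-b_{1})>v(\alpha)$. Since $\alpha-b_{1}$ is again integral over $B$ with larger order, I iterate to produce $b_{1},b_{2},\dots\in B$ with $v\big(\alpha-\sum_{i\leq k}b_{i}\big)$ strictly increasing, hence tending to $\infty$. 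The series $\sum_{k}b_{k}$ then converges to $\alpha$ in the completion $\widehat B$, so $\alpha\in\widehat B\cap F$; as $B\to\widehat B$ is faithfully flat one has $b\widehat B\cap B=bB$, and writing $\alpha=a/b$ this yields $a\in bB$, i.e.\ $\alpha\in B$. Together with $B$ being a domain this shows $B$ is normal.

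The step I expect to be the main obstacle is the Key Lemma, because $\operatorname{in}$ does not respect sums: taking initial forms of the monic relation for $\alpha$ yields a homogeneous relation $\sum_{i\in S}\operatorname{in}(c_{i})\operatorname{in}(\alpha)^{i}=0$ only among those monomials $c_{i}\alpha^{i}$ realising the minimal $v$-value, and the top monomial $\alpha^{d}$ need not lie in $S$, so the resulting relation need not be monic. Extracting genuine integrality of $\operatorname{in}(\alpha)$ over the normal ring $\operatorname{gr}(B)$ from this is the delicate point, and it is exactly where normality must be fully exploited. The cleanest rigorous route, should the direct extraction prove too fragile, is to pass to the completion $\widehat B$, which has the same associated graded $\operatorname{gr}(\widehat B)=\operatorname{gr}(B)$, establish that $\widehat B$ is a normal domain there (completeness makes the approximation converge internally), and then descend normality to $B$ along the faithfully flat map $B\to\widehat B$ via Serre's criterion.
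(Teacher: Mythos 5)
Your part (1) is complete and correct, and it is exactly the argument the paper intends: the paper itself records only the multiplicativity $\operatorname{in}(ab)=\operatorname{in}(a)\cdot\operatorname{in}(b)$ of initial forms and then defers to \cite[Section~17]{Matsumura} and \cite[Chapter~5]{Eisenbud} for the Proposition, so for (1) you have written out more than the paper does. Your preparatory steps in part (2) are also sound: $B$ is a domain, the order function $v$ extends to a valuation on $F$, $v(\alpha)\geq 0$ for $\alpha$ integral over $B$, and the descent step $b\widehat B\cap B=bB$ by faithful flatness is legitimate.

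Part (2), however, has a genuine gap, and it sits exactly where you say it does: the Key Lemma is never proved, and it cannot be extracted from the monic equation by taking initial forms. The cancellation problem you describe is not a removable technicality but the entire content of the statement: when the terms of minimal $v$-value in $\sum_i c_i\alpha^i=0$ all have index $i<d$, the homogeneous relation you obtain in $\operatorname{gr}(B)$ is not monic, and the standard leading-coefficient trick only yields that $\operatorname{in}(c_j)\operatorname{in}(\alpha)$ is integral over $\operatorname{gr}(B)$ for some coefficient $c_j$, i.e. $\operatorname{in}(b)\mid\operatorname{in}(c_j)\operatorname{in}(a)$ rather than $\operatorname{in}(b)\mid\operatorname{in}(a)$. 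Moreover, your fallback does not repair this: completion only fixes the \emph{convergence} of the successive approximation, whereas the gap is in constructing each approximant $b_{k+1}$; since $\operatorname{gr}(\widehat B)=\operatorname{gr}(B)$, inside $\widehat B$ you need the identical unproved divisibility, so ``pass to the completion'' is circular as a cure for the Key Lemma. The classical way to close the gap bypasses initial forms of the integral equation altogether: set $B'=B[\alpha]\subset F$, a finite $B$-module on which $v\geq 0$, and filter it by $F^n=\{x\in B'\mid v(x)\geq n\}$, so that $F^n\cap B=\mathfrak m^n$. With $c=b^{d-1}$ one has $cB'\subset B$ and $F^n=c^{-1}\bigl(\mathfrak m^{\,n+v(c)}\cap cB'\bigr)$, so Artin--Rees shows $\{F^n\}$ is a stable $\mathfrak m$-filtration; hence $\operatorname{gr}_F(B')$ is a finite $\operatorname{gr}(B)$-module, and multiplication by $\operatorname{in}(c)$ embeds it into $\operatorname{Frac}(\operatorname{gr}(B))$ as a ring extension of $\operatorname{gr}(B)$. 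Normality of $\operatorname{gr}(B)$ then forces $\operatorname{gr}_F(B')=\operatorname{gr}(B)$, i.e. $F^n=\mathfrak m^n+F^{n+1}$ for all $n$, whence $B'=B+F^n$ for every $n$; stability of the filtration and Nakayama give $B'=B$, so $\alpha\in B$. Note that this route needs neither the Key Lemma nor the completion, and it is the kind of argument hiding behind the references the paper cites.
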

See \cite[Section 17]{Matsumura} and 
\cite[Chapter 5]{Eisenbud} for more details.

%
%
%
%
%

\section{The scheme of representations and characters}
\label{sec:schemereps}
The general reference for this section is Lubotzky's and Magid's book \cite{LubotzkyMagid}.
In what follows we let $\Gamma$ denote a finitely generated group with presentation~\eqref{eqn:fpGroup}.

\subsection{The universal algebra $A(\Gamma)$.}\label{sec:universal_algebra}
The $\CC$-algebra $A(\Gamma)$ comes with a representation $\rho_u\co  \Gamma \to \SLL_2(A(\Gamma))$, and the pair $(A(\Gamma),\rho_u)$ has the following universal property: for every commutative $\CC$-algebra $A$ and every representation $\rho\co\Gamma\to\SLL_2(A)$ there exists a unique algebra morphism $f\co  A(\Gamma)\to A$ such that $\rho = f_*\circ \rho_u$ where $f_*\co \SLL_2(A(\Gamma))\to\SLL_2(A)$ is the induced map. 

The algebra $A(\Gamma)$ can be constructed from the 
presentation of $\Gamma$ in~\eqref{eqn:fpGroup}, see \cite[Prop.~1.2]{LubotzkyMagid}. 
In more detail, the algebra $A(\Gamma)$ is a quotient of the finitely generated polynomial algebra, and the construction goes at follows:
put 
\[
X^{(k)} = \left( 
\begin{smallmatrix} x^{(k)}_{11} & x^{(k)}_{12}\\ x^{(k)}_{21} & x^{(k)}_{22} \end{smallmatrix}
\right) \text{ for  $1\leq k\leq n$.}
\]
and consider the polynomial ring on $ 4 k$ variables 
\( 
 \CC[x_{ij}^{(k)}]\),    where $1\leq i,j \leq 2$ and $1\leq k\leq n$.
Then $A(\Gamma)$ is the quotient of $\CC[x_{ij}^{(k)}]$ by the ideal $J$ generated by
\[
\det(X^{(k)})-1,\text{ $1\leq k\leq n$, and } 
(m_l)_{ij}-\delta_{ij}, \ 1\leq i,j\leq2,\ l\in L 
\]
where $m_l = r_l(X^{(1)},\ldots,X^{(n)})$ is a matrix in $\operatorname{GL}( \CC[x_{ij}^{(k)}]  )$,
$(m_l)_{ij}$ are the entries of $m_l$, and $\delta_{ij}$ is the Kronecker delta.

\subsection{The representation scheme and the representation variety}
For a finitely generated group $\Gamma$ the
$\SL2$-\emph{scheme of representations}  $ R(\Gamma, \SL2)$ is defined as pair 
\[
R(\Gamma, \SL2) := \big(\operatorname{Spm}(A(\Gamma)),A(\Gamma)\big)
\]
where $A(\Gamma)$ is the universal algebra of $\Gamma$ (see Section~\ref{sec:universal_algebra}).

A single representation $\rho\co \Gamma\to\SL2$ corresponds to $\CC$-algebra morphism
$A(\Gamma)\to \CC$ i.e.\ to a maximal ideal $\mathfrak m_\rho \subset A(\Gamma)$, and each maximal ideal $\mathfrak m\subset A(\Gamma)$ determines a representation
$\rho_\mathfrak{m}\co \Gamma\to\SL2$.
Hence maximal ideals in $A(\Gamma)$ correspond exactly to representations
$\Gamma\to\SL2$, and we will use this identification in what follows. 

Regular functions on $R(\Gamma, \SL2) $ are exactly the elements of $A(\Gamma)$. More precisely, if $f\in A(\Gamma)$ and $\rho\in R(\Gamma, \SL2) $, then 
\[
f(\rho) = f\bmod \mathfrak m_\rho \in A(\Gamma)/\mathfrak m_\rho\cong\CC\,.
\]

If $N$ is the nil-radical of $A(\Gamma)$, then the  reduced algebra 
$A(\Gamma)_\mathrm{red} = A(\Gamma)/N$ is isomorphic to the 
coordinate ring of the representation variety. This justifies the notation
$R(\Gamma,\SL2)_{\mathrm{red}}$ for the representation variety. The surjection
$A(\Gamma) \twoheadrightarrow A(\Gamma)_\mathrm{red}$ induces a \emph{closed immersion}
$R(\Gamma,\SL2)_{\mathrm{red}}\to R(\Gamma,\SL2)$, the underlying continuous map is a homeomorphism in the Zariski topology. 
In particular the scheme $R(\Gamma,\SL2)$ and the variety $R(\Gamma,\SL2)_{\mathrm{red}}$ have the same dimension.
The scheme $R(\Gamma,\SL2)$ might support more regular functions than 
$R(\Gamma,\SL2)_\mathrm{red}$. More precisely, different regular functions on the scheme can have the same values on maximal ideals.

If the nil-radical of $A(\Gamma)$ is trivial then we call the algebra $A(\Gamma)$ and the representation scheme 
$R(\Gamma,\SL2)$ \emph{reduced}.

\subsection{The universal algebra $B(\Gamma)$,  the character scheme, and trace functions}
The group $\SL2$ acts on the $2\times 2$ matrices by conjugation. 
This induces an action of $\SL2$ on the algebra $A(\Gamma)$, and the algebra 
$B(\Gamma)= A(\Gamma)^{\SL2}\subset A(\Gamma)$ of invariants is called the \emph{universal algebra of $\SL2$-characters}. The algebra $B(\Gamma)$ is also a quotient of a finitely generated polynomial algebra, and the affine GIT quotient is naturally defined as the  
\emph{scheme of characters} 
\[
X(\Gamma, \SL2) := (\operatorname{Spm}(B(\Gamma)), B(\Gamma))\,,
\] see \cite{LubotzkyMagid}. 
A single character $\chi\co \Gamma\to\CC$ corresponds to $\CC$-algebra morphism
$B(\Gamma)\to \CC$ i.e.\ to a maximal ideal in $B(\Gamma)$. 
Also the algebra $B(\Gamma)$ might be non-reduced.

\begin{Remark}
Notice that both $ R(\Gamma, \SL2)$ and $X(\Gamma, \SL2)$ can be reducible.
\end{Remark}

Given an element $\gamma\in\Gamma$ there exists a word
$w_\gamma(\gamma_1,,\ldots,\gamma_n)\in \mathbb F_n$ which represents $\gamma$.
The matrix $w_\gamma(X^{(1)},\ldots,X^{(n)})\in \operatorname{GL}_2(\CC[x_{ij}^{(k)}])$,
and 
$$t_\gamma := \tr \big(w_\gamma(X^{(1)},\ldots,X^{(n)})\big)$$ 
represents an element in $B(\Gamma)$. We call $t_\gamma$ the \emph{trace function} associated to 
$\gamma\in\Gamma$. 

The \emph{first fundamental theorem} of invariant theory asserts that the trace functions $t_\gamma$, $\gamma\in \Gamma$,
generate the algebra $B(\Gamma)$ (see \cite{LubotzkyMagid} for instance). Besides finding an explicit finite set of generators,
we aim to describe the relations satisfied by the $t_\gamma$. 
For the moment we just state some basic relations that follow from properties of the trace.
Namely for all 
$a,b\in\Gamma $ and the neutral element $e\in\Gamma$ we have:
\begin{equation}\label{eq:trace_rel}
t_e = 2,\quad t_a = t_{a^{-1}},\quad t_{ab} = t_{ba},\quad\text{ and }\quad
t_{ab} + t_{ab^{-1}} = t_{a}t_{b}.
\end{equation}
The last equality follows from the  Cayley--Hamilton theorem.


\subsection{Free groups} 
The general reference for this section is Goldman's Handbook article \cite{Goldman09} and the references therein. 

Let $\mathbb F_n$ be a free group of rank $n$.
It is a classical result that the algebra $B(\mathbb{F}_n)$ is reduced. By works of Cartier \cite{Cartier}, the coordinate ring $\CC[G]$ of an algebraic group 
$G$ in characteristic zero is reduced (see Section~3.h of \cite{Milne} for a modern approach). But for $\SL2$ a more elementary argument is sufficient.
\begin{Theorem}
 The algebra $B(\mathbb{F}_n)$ is reduced.
\end{Theorem}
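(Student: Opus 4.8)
The plan is to deduce the statement from the reducedness of the larger algebra $A(\mathbb F_n)$. Since $B(\mathbb F_n) = A(\mathbb F_n)^{\SL2}$ is by definition a subalgebra of $A(\mathbb F_n)$, and any subring of a reduced ring is reduced (a nilpotent element of the subring is a fortiori a nilpotent element of the ambient ring, hence zero), it suffices to prove that $A(\mathbb F_n)$ itself has trivial nilradical. This has the advantage of reducing the claim to a statement about the representation scheme, which for a free group is entirely concrete.

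First I would write down $A(\mathbb F_n)$ explicitly from the construction in Section~\ref{sec:universal_algebra}. For the free group there are no relations $r_l$, so the ideal $J$ is generated only by $\det(X^{(k)}) - 1$ for $1 \le k \le n$, and hence
\[
A(\mathbb F_n) \cong \CC[x_{ij}^{(k)} : 1 \le i,j \le 2,\ 1 \le k \le n] \big/ \big(\det(X^{(k)}) - 1 : 1 \le k \le n\big),
\]
which is the coordinate ring of $\SL2 \times \cdots \times \SL2$ ($n$ factors). Equivalently $A(\mathbb F_n) \cong \CC[\SL2]^{\otimes n}$, and the underlying space of $R(\mathbb F_n, \SL2)$ is nothing but the smooth affine variety $(\SL2)^n$.

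Next I would check that this ring is in fact an integral domain, which is stronger than reduced. The single generator $\det(X^{(k)}) - 1 = x_{11}^{(k)} x_{22}^{(k)} - x_{12}^{(k)} x_{21}^{(k)} - 1$ is irreducible in the polynomial ring $\CC[x_{ij}^{(k)}]$ in its four variables: its quadratic part is a nondegenerate form of rank four, so it cannot be written as a product of two affine-linear forms, and it has no linear factor. As the polynomial ring is a UFD, the principal ideal it generates is prime, so each factor $\CC[\SL2]$ is a domain. The product $(\SL2)^n$ of irreducible varieties over the algebraically closed field $\CC$ is again irreducible, equivalently the tensor product over $\CC$ of finitely many integral domains is again an integral domain; hence $A(\mathbb F_n)$ is a domain, in particular reduced. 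Combined with the first paragraph this proves the theorem.

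The only points requiring care, rather than a genuine obstacle, are the two algebraic facts just used: the irreducibility of $\det X - 1$ (which could also be seen by exhibiting $\SL2$ as a smooth connected variety) and the preservation of integrality under tensor product over $\CC$, where the hypothesis that the ground field is algebraically closed is essential. This is precisely the elementary route alluded to after the statement: it bypasses the general theorem of Cartier that the coordinate ring of any algebraic group in characteristic zero is reduced, using instead only the transparent geometry of $(\SL2)^n$ together with the embedding of $B(\mathbb F_n)$ into its coordinate ring.
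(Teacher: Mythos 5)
Your proof is correct and follows essentially the same route as the paper: irreducibility of $\det X^{(k)}-1$ in a polynomial ring (a UFD) makes $\CC[\SL2]$ a domain, then $A(\mathbb{F}_n)\cong \CC[\SL2]^{\otimes n}$ is handled by a tensor-product argument, and finally $B(\mathbb{F}_n)=A(\mathbb{F}_n)^{\SL2}$ is reduced as a subring of a reduced ring. The only cosmetic difference is in the tensor step: the paper cites the fact that a tensor product of reduced algebras over $\CC$ is reduced (Bourbaki), while you use the slightly stronger fact that a tensor product of domains over an algebraically closed field is a domain; both are valid.
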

\begin{proof}
The polynomial 
$x_1x_4-x_2x_3-1\in \CC[x_1,x_2,x_3,x_4]$ is irreducible, and an irreducible element in a factorial ring is prime. Therefore the coordinate algebra 
$$\CC[\SL2] = \CC[x_1,x_2,x_3,x_4]/(x_1x_4-x_2x_3-1)$$ is a domain, and hence reduced.
Moreover,  we obtain that
$$\displaystyle A(\mathbb{F}_n)\cong \bigotimes_{k=1}^n \CC[\SL2]$$ is reduced  
\cite[V, \S15, Theorem~3]{Bourbaki:algII}. It follows that 
$B(\mathbb{F}_n)= A(\mathbb{F}_n)^{\SL2}\subset A(\mathbb{F}_n)$ is also reduced.
\end{proof}
An explicit presentation of $B(\mathbb{F}_n)$ is given by  
Ashley, Burelle, and Lawton  in~\cite{Ashley2018}.

For a free group  $\mathbb F_n = \langle \gamma_1,\ldots,\gamma_n\mid \emptyset \rangle $ 
of rank $n$ the algebra
$B(\mathbb F_n)$ is generated by the following $n(n^2+5)/6$ elements
\[
t_{\gamma_i},\ 1\leq i\leq n,\quad 
t_{\gamma_i\gamma_j},\ 1\leq i<j\leq n,\quad\text{and}\quad 
t_{\gamma_i\gamma_j\gamma_k},\ 1\leq i<j<k\leq n
\]
(see \cite{FicoMontesinos}).
In the following examples we give the explicit presentation  $B(\mathbb{F}_n)$ in the rank two and three.

\begin{Example} The rank two case goes back to
  Fricke and Klein; they  proved that $B(\mathbb{F}_2)$ is isomorphic the polynomial ring 
$\CC[t_{\gamma_1},t_{\gamma_2},t_{\gamma_1\gamma_2}]$ in three variables (see Section 2.2 in~\cite{Goldman09}).
\end{Example}

\begin{Example} In the case of rank three $B(\mathbb{F}_3)$  is not isomorphic to a polynomial algebra.
More precisely,  for a free group on three generators $\mathbb F_ 3=\langle a,b,c\mid\emptyset\rangle$, 
the coordinate ring of $X( \mathbb F_3,\SL2)$ 
has been computed for instance  in \cite{FicoMontesinos, Magnus}:
\begin{align}
B( \mathbb F_3 ) & = 
\CC[t_a,t_b,t_c,t_{ab},t_{ac},t_{bc},t_{abc}]/(F) \label{eq:CoRi}
\intertext{where}
F &= F(t_a,t_b,t_c,t_{ab},t_{ac},t_{bc},t_{abc}) = t_{abc}^2 - p\, t_{abc} + q \label{eq:F}
\intertext{with $p,q\in \CC[t_a,t_b,t_c,t_{ab},t_{ac},t_{bc}]$ given by}
p &= t_at_{bc} + t_bt_{ac} + t_ct_{ab}- t_at_bt_c \label{eq:p}
\intertext{and}
q  &= t_a^2 + t_b^2 + t_c^2 + t_{ab}^2 + t_{ac}^2 + t_{bc}^2 +
t_{ab}t_{ac}t_{bc} 
- t_at_bt_{ab} - t_at_ct_{ac} - t_bt_ct_{bc} -4\,.\label{eq:q}
\end{align}
The trace functions $t_{abc}$, $t_{acb}$ are solutions of the quadratic equation
over $\CC[t_a,t_b,t_c,t_{ab},t_{ac},t_{bc}]$
\begin{equation}\label{eq:quad_eq}
 X^2 - p\, X +q =0\,.
\end{equation}
We have 
\begin{equation}\label{eq:p_and_q}
t_{abc}+ t_{acb} = p,\quad t_{abc}t_{acb} =q\quad\text{ and }\quad (t_{abc}-t_{acb})^2 = \Delta
\end{equation}
where
\begin{equation}\label{eq:disc}
\Delta = p^2 -4q\,.
\end{equation}
(see Section 5.1 in \cite{Goldman09}).
\end{Example}

\section{Local properties}
\label{section:local}

In this section we keep on reviewing properties of the scheme of representations
and the scheme of characters, now focusing on local properties.
For instance, 
in order to check that the scheme is reduced, it is sufficient to prove that it is locally reduced.

\subsection{Zariski tangent space}
For a representation $\rho\co \Gamma\to \SL2$, $Z^1(\Gamma,\Ad\rho)$
denotes the space of 1-cocycles or crossed morphisms twisted by 
$\Ad\rho$, namely linear maps
$$
d\co \Gamma\to \sl2 
$$
that satisfy 
$$
d(\gamma_1\gamma_2)=d(\gamma_1)+\Ad_{\rho(\gamma_1)} d(\gamma_2),\qquad \forall \gamma_1,\gamma_2\in \Gamma.
$$
Weil's construction \cite{Weil}  gives the following theorem (see also \cite{LubotzkyMagid}):
\begin{Proposition}
The Zariski tangent space to the scheme  $R(\Gamma,\SL2)$ at
$\rho$ is naturally isomorphic to $Z^1(\Gamma,\Ad\rho)$.
\end{Proposition}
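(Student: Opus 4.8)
The plan is to identify the Zariski tangent space through morphisms from the dual numbers and then feed these into the universal property of $A(\Gamma)$. By Lemma~\ref{lem:tangent-scheme}, an element of $T_\rho(R(\Gamma,\SL2)) = (\mathfrak{m}_\rho/\mathfrak{m}_\rho^2)^*$ corresponds to a scheme morphism $(\operatorname{Spm}(A_2),A_2)\to R(\Gamma,\SL2)$ sending the unique point $*$ to $\mathfrak m_\rho$, equivalently to an algebra homomorphism $A(\Gamma)\to A_2$ whose composition with the reduction $A_2\to\CC$, $\epsilon\mapsto 0$, is $\phi_{\mathfrak m_\rho}$. By the universal property of $A(\Gamma)$ recalled in Section~\ref{sec:universal_algebra}, such algebra homomorphisms are in natural bijection with representations $\tilde\rho\co\Gamma\to\SLL_2(A_2)$ whose reduction modulo $\epsilon$ equals $\rho$. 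So the first step is just to compose these two bijections and thereby identify $T_\rho(R(\Gamma,\SL2))$ with the set of such infinitesimal deformations $\tilde\rho$ of $\rho$.

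The second step is to compute explicitly what it means for $\tilde\rho$ to be a homomorphism. I would write $\tilde\rho(\gamma) = (\id + \epsilon\, d(\gamma))\rho(\gamma)$ for a uniquely determined map $d\co\Gamma\to M_2(\CC)$; this representation is possible and unique because $\tilde\rho(\gamma)$ reduces to the invertible matrix $\rho(\gamma)\in\SL2\subset \operatorname{GL}_2(\CC)$. Expanding the product $\tilde\rho(\gamma_1)\tilde\rho(\gamma_2)$ in $A_2$, using $\epsilon^2=0$ and $\rho(\gamma_1)(\id+\epsilon\,d(\gamma_2)) = (\id + \epsilon\,\Ad_{\rho(\gamma_1)}d(\gamma_2))\rho(\gamma_1)$, and cancelling the invertible factor $\rho(\gamma_1\gamma_2)$ on the right, one finds that $\tilde\rho$ is a homomorphism if and only if $d(\gamma_1\gamma_2) = d(\gamma_1) + \Ad_{\rho(\gamma_1)}d(\gamma_2)$, which is exactly the crossed-morphism relation. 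Since $\tilde\rho$ comes from a genuine representation of $\Gamma$ (not merely of $\mathbb F_n$), the relations of $\Gamma$ are automatically respected, so $d$ is a cocycle on $\Gamma$. Separately, the condition $\det\tilde\rho(\gamma)=1$ in $A_2$ expands as $1 + \epsilon\,\tr(d(\gamma)) = 1$, forcing $\tr d(\gamma)=0$, i.e.\ $d(\gamma)\in\sl2$. Hence $\tilde\rho\leftrightarrow d$ is a bijection between infinitesimal deformations of $\rho$ and elements $d\in Z^1(\Gamma,\Ad\rho)$.

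Finally I would verify that the composite correspondence $T_\rho(R(\Gamma,\SL2))\to Z^1(\Gamma,\Ad\rho)$ is $\CC$-linear, so that it is an isomorphism of vector spaces. Linearity holds because the dual-number morphism depends linearly on the linear form $\ell\in(\mathfrak m_\rho/\mathfrak m_\rho^2)^*$ produced by Lemma~\ref{lem:tangent-scheme}, and the assignment $\tilde\rho\mapsto d$ is linear in the $\epsilon$-component $\alpha_1^*$. Naturality with respect to $\Gamma$ and $\rho$ is then immediate from the constructions.

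The main obstacle is the bookkeeping in the second step: one must carefully factor $\tilde\rho(\gamma_1)\tilde\rho(\gamma_2)$ so that the adjoint action $\Ad_{\rho(\gamma_1)}$ appears in the correct slot, and simultaneously track the trace-zero constraint that distinguishes $\SL2$ from $\operatorname{GL}_2(\CC)$. Everything else is formal once the two universal bijections (dual numbers $\leftrightarrow$ tangent vectors, and universal property of $A(\Gamma)$ $\leftrightarrow$ representations into $\SLL_2(A_2)$) are in place.
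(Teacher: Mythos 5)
Your proposal is correct and follows essentially the same route as the paper: identify tangent vectors with dual-number morphisms via Lemma~\ref{lem:tangent-scheme}, convert these to representations $\Gamma\to\SLL_2(A_2)$ lifting $\rho$ via the universal property of $A(\Gamma)$, and check that writing $\tilde\rho(\gamma)=(\id+\epsilon\,d(\gamma))\rho(\gamma)$ makes the homomorphism condition equivalent to the cocycle condition with $d(\gamma)\in\sl2$. You merely spell out the computation that the paper dismisses as ``easy to check,'' including the trace-zero constraint from $\det=1$, which is a fine elaboration rather than a different argument.
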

\begin{proof} By Lemma~\ref{lem:tangent-scheme}, we have that a tangent vector of
$R(\Gamma,\SL2)$ at $\rho$ corresponds to a scheme morphism
\[
(\alpha,\alpha^*) \co (\{*\},A_2)\to \big(\operatorname{Spm}\big(A(\Gamma)\big),A(\Gamma)\big)
\]
such that $\alpha(*) = \rho$, and $\alpha^*\co A(\Gamma)\to A_2$ is an algebra homomorphism. In turn,
this corresponds to a representation $\rho_\alpha\co\Gamma\to\SLL_2(A_2)$ such that
for all $\gamma\in \Gamma$
\[
\rho_\alpha(\gamma) = \big(\id + \epsilon\, d(\gamma)\big) \rho(\gamma)\ \text{such that $d\co\Gamma\to\sl2$.}
\]
Notice that $\SLL_2(A_2) = \{ (\id +\epsilon X) A\mid A\in\SL2\text{ and } \tr(X)=0\}$.

Now, it is easy to check that $\rho_\alpha$ is a homomorphism if and only if 
$d\co \Gamma\to\sl2$ is a cocycle.
\end{proof}

The cohomology  of $\Gamma$ with coefficients in the
$\Gamma$-module $\sl2$ twisted by $\Ad\rho$ is isomorphic to
$$
H^1(\Gamma,\sl2)\cong Z^1(\Gamma,\Ad\rho)/B^1(\Gamma,\Ad\rho),
$$
where $B^1(\Gamma,\Ad\rho)$ denotes the subspace of inner crossed morphisms, namely crossed morphisms 
$d\co \Gamma\to \sl2$ for which there exists $a\in \sl2$ so that 
$$
d(\gamma)=a-\Ad_{\rho(\gamma)}(a),
\qquad \forall\gamma\in\Gamma.
$$

\begin{Theorem}
\label{thm:H1ZT}
If $\rho$ is simple, then 
the Zariski tangent space to $X(\Gamma, \SL2)$ at $\chi_\rho$
is naturally isomorphic to $ H^1(\Gamma,  \Ad\rho)$.
\end{Theorem}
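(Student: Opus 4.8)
The plan is to study the GIT quotient morphism $\pi\co R(\Gamma,\SL2)\to X(\Gamma,\SL2)$ dual to the inclusion $B(\Gamma)=A(\Gamma)^{\SL2}\hookrightarrow A(\Gamma)$, and to compute its differential at $\rho$ by means of a slice. A first useful observation is that the conjugation action of $\SL2$ on $R(\Gamma,\SL2)$ is trivial on the center $\{\pm\id\}=Z(\SL2)$, so it factors through $\operatorname{PSL}_2(\CC)$, and $A(\Gamma)^{\SL2}=A(\Gamma)^{\operatorname{PSL}_2(\CC)}$; thus $X(\Gamma,\SL2)=R(\Gamma,\SL2)/\!/\operatorname{PSL}_2(\CC)$. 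I would carry out the tangent computation for this $\operatorname{PSL}_2(\CC)$-action, where the point $\rho$ turns out to be particularly well behaved.

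Next I would record the two facts that make $\rho$ a stable point. Since $\rho$ is simple, Schur's lemma gives that the centralizer of $\rho(\Gamma)$ in $M_2(\CC)$ consists of scalars; hence the stabilizer of $\rho$ in $\operatorname{PSL}_2(\CC)$ is trivial and $H^0(\Gamma,\Ad\rho)=(\sl2)^{\Ad\rho}=0$. Consequently the orbit map $\operatorname{PSL}_2(\CC)\to R(\Gamma,\SL2)$, $g\mapsto g\rho g^{-1}$, has injective differential at the identity: a direct computation in dual numbers shows this differential sends $a\in\sl2$ to the cocycle $\gamma\mapsto a-\Ad_{\rho(\gamma)}a$, so its image is exactly $B^1(\Gamma,\Ad\rho)$, and injectivity is $H^0=0$. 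Using the previous Proposition that $T_\rho R(\Gamma,\SL2)=Z^1(\Gamma,\Ad\rho)$, I obtain $T_\rho(\operatorname{PSL}_2(\CC)\cdot\rho)=B^1(\Gamma,\Ad\rho)\subset Z^1(\Gamma,\Ad\rho)$, a three-dimensional subspace. Finally, the orbit is closed because closed $\SL2$-orbits in $R(\Gamma,\SL2)$ are those of semisimple representations, and a simple representation is semisimple.

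Now I would invoke Luna's étale slice theorem for the reductive group $\operatorname{PSL}_2(\CC)$ at the point $\rho$, whose orbit is closed with \emph{trivial} stabilizer. The theorem produces an étale-local identification of a neighborhood of $\chi_\rho$ in $X(\Gamma,\SL2)$ with a slice whose Zariski tangent space at $\rho$ is the normal space $N_\rho:=T_\rho R(\Gamma,\SL2)/T_\rho(\operatorname{PSL}_2(\CC)\cdot\rho)=Z^1(\Gamma,\Ad\rho)/B^1(\Gamma,\Ad\rho)=H^1(\Gamma,\Ad\rho)$; because the stabilizer is trivial, there are no invariants to take and the slice itself is the local model. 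Since an étale morphism induces an isomorphism on Zariski tangent spaces, this gives $T_{\chi_\rho}X(\Gamma,\SL2)\cong N_\rho=H^1(\Gamma,\Ad\rho)$. The isomorphism is natural: the differential $d\pi\co Z^1(\Gamma,\Ad\rho)\to T_{\chi_\rho}X(\Gamma,\SL2)$ kills $B^1$ (which is tangent to the orbit) and restricts to an isomorphism on a complement, so it realizes the canonical projection $Z^1\to Z^1/B^1=H^1$.

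The step I expect to be the main obstacle is the \emph{scheme-theoretic} application of the slice theorem, since the whole point of this paper is that $R(\Gamma,\SL2)$ may be singular and even non-reduced, whereas the classical statement is phrased for smooth varieties. I would handle this by choosing a $\operatorname{PSL}_2(\CC)$-equivariant closed immersion of $R(\Gamma,\SL2)$ into a smooth affine $\operatorname{PSL}_2(\CC)$-variety, such as a power of $\SL2$ or a linear representation, applying Luna's theorem there, and intersecting the resulting slice with $R(\Gamma,\SL2)$ to get the transverse slice subscheme carrying the (possibly non-reduced) structure. The triviality of the stabilizer is exactly what prevents the quotient from introducing extra nilpotents at $\chi_\rho$ beyond those recorded by $N_\rho$, so that the étale-local model computes $T_{\chi_\rho}X(\Gamma,\SL2)$ faithfully and yields the naturality stated above.
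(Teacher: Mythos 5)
Your proposal is correct and is essentially the argument behind the paper's proof: the paper does not prove Theorem~\ref{thm:H1ZT} directly but cites Theorem~2.13 of Lubotzky--Magid, and the proof of that cited result is precisely your Luna-slice argument (simplicity gives $H^0(\Gamma,\Ad\rho)=0$, a closed orbit and trivial $\mathrm{PSL}_2(\CC)$-stabilizer, so an \'etale slice through $\rho$ maps tangent-isomorphically to $X(\Gamma,\SL2)$, and its tangent space is a complement of $B^1(\Gamma,\Ad\rho)$ in $Z^1(\Gamma,\Ad\rho)$). The scheme-theoretic issue you flag is real but is resolved exactly as you propose---by an equivariant closed embedding of $R(\Gamma,\SL2)$ into a smooth affine $\mathrm{PSL}_2(\CC)$-variety and intersecting the linear slice with $R(\Gamma,\SL2)$---since Luna's theorem holds in this generality and is what Lubotzky--Magid use for their (possibly non-reduced) representation schemes.
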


This is \cite[Thm~2.13]{LubotzkyMagid}.

Recall from the previous section that the variety of representations
$R(\Gamma,\SL2)_\mathrm{red} $ is the union of affine varieties. 
We denote by 
$\dim_\rho R(\Gamma,\SL2)_\mathrm{red}$
the maximal dimension of all components
of $R(\Gamma,\SL2)_\mathrm{red}$ containing $\rho$.

\begin{Lemma}
\label{lemma:schemesmooth}
For any representation $\rho\in R(\Gamma,\SL2)$, we have
\[
\dim_\rho R(\Gamma,\SL2)_\mathrm{red} \leq \dim Z^1(\Gamma;\Ad\rho),
\]
with equality if, and only if,  $\rho$ is reduced and $\rho$ is 
a smooth point of the representation variety.
\end{Lemma}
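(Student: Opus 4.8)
The plan is to reduce the whole statement to the regular local ring criterion applied to the local ring $B := A(\Gamma)_{\mathfrak{m}_\rho}$ of the representation scheme at $\rho$. Since $A(\Gamma)$ is a finitely generated $\CC$-algebra, $B$ is a noetherian local ring with maximal ideal $\mathfrak{m}$ and residue field $\CC$. Write $\bar B := B/N(B)$ for its reduction. By Lemma~\ref{lem:non-reduced_closed}(1) we have $N(A(\Gamma))_{\mathfrak{m}_\rho} = N(A(\Gamma)_{\mathfrak{m}_\rho}) = N(B)$, so that $\bar B = (A(\Gamma)/N)_{\bar{\mathfrak{m}}_\rho}$ is exactly the local ring of the variety $R(\Gamma,\SL2)_\mathrm{red}$ at $\rho$.

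First I would identify the two sides of the inequality with invariants of $B$. By Weil's identification $T_\rho R(\Gamma,\SL2)\cong Z^1(\Gamma,\Ad\rho)$ together with the definition $T_\rho R(\Gamma,\SL2) = (\mathfrak{m}_\rho/\mathfrak{m}_\rho^2)^\ast = (\mathfrak{m}/\mathfrak{m}^2)^\ast$, the right-hand side is the embedding dimension of $B$:
\[
\dim Z^1(\Gamma,\Ad\rho) = \dim_\CC \mathfrak{m}/\mathfrak{m}^2.
\]
For the left-hand side, the maximal dimension of a component of $R(\Gamma,\SL2)_\mathrm{red}$ through $\rho$ equals the Krull dimension of the local ring of the variety, so $\dim_\rho R(\Gamma,\SL2)_\mathrm{red} = \dim \bar B$; and since nilpotents do not affect Krull dimension, $\dim \bar B = \dim B$.

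With these identifications the asserted inequality is precisely the standard bound $\dim B \le \dim_\CC \mathfrak{m}/\mathfrak{m}^2$ (Krull dimension $\le$ embedding dimension), valid for every noetherian local ring, and equality holds if and only if $B$ is a regular local ring, by the definition of regularity. It then remains to check that $B$ is regular if and only if $\rho$ is reduced and $\rho$ is a smooth point of the variety, i.e.\ $\bar B$ is regular. A regular local ring is an integral domain, in particular reduced; hence $B$ regular forces $N(B)=0$, so $\rho$ is reduced, and then $B = \bar B$ is regular. Conversely, if $\rho$ is reduced then $B = \bar B$, and if in addition $\bar B$ is regular then $B$ is regular.

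The only real subtlety, and the step I would treat most carefully, is the dimension bookkeeping in the second paragraph: namely that $\dim_\rho R(\Gamma,\SL2)_\mathrm{red}$, defined through the components of the \emph{variety}, coincides with the Krull dimension of the \emph{scheme-theoretic} local ring $B$. This rests on the two facts that the local dimension of an affine variety at a point is the Krull dimension of its local ring there, and that $\dim B = \dim B/N(B)$ because $\operatorname{Spm}(B)$ and $\operatorname{Spm}(\bar B)$ agree as topological spaces. Everything else is the regular local ring criterion for a noetherian local ring with residue field $\CC$, combined with the fact that regular local rings are domains.
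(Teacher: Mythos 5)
Your proof is correct, but it takes a genuinely different (more commutative-algebraic) route than the paper. The paper proves the inequality by the chain
\[
\dim_\rho R(\Gamma,\SL2)_{\mathrm{red}} \;\leq\; \dim T_\rho\left(R(\Gamma,\SL2)_{\mathrm{red}}\right) \;\leq\; \dim T_\rho R(\Gamma,\SL2) \;=\; \dim Z^1(\Gamma,\Ad\rho),
\]
passing through the Zariski tangent space of the \emph{variety} and using that the closed immersion $R(\Gamma,\SL2)_{\mathrm{red}}\to R(\Gamma,\SL2)$ injects tangent spaces; the equality case is then dismissed as an easy consequence. You eliminate that intermediate tangent space entirely: you identify the left-hand side with the Krull dimension of the scheme-theoretic local ring $B=A(\Gamma)_{\mathfrak m_\rho}$ (via $\dim B=\dim \bar B$, nilpotents being irrelevant to Krull dimension) and the right-hand side with its embedding dimension, so the whole lemma becomes the regular local ring criterion. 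What your route buys is an explicit treatment of the ``if and only if'': the delicate implication --- that equality forces $\rho$ to be \emph{reduced} --- falls out of the theorem that regular local rings are domains, a step the paper leaves implicit (equality of the two tangent-space dimensions alone does not obviously kill nilpotents; one needs precisely your argument that $\dim B=\dim_\CC \mathfrak m/\mathfrak m^2$ makes $B$ regular, hence a domain, hence reduced). What the paper's route buys is brevity and geometric transparency for the inequality itself; both arguments ultimately rest on the same two external inputs, Weil's identification $T_\rho R(\Gamma,\SL2)\cong Z^1(\Gamma,\Ad\rho)$ and standard dimension theory for noetherian local rings.
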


\begin{proof}
 The proof is an easy consequence of the following inequalities:
\begin{multline*}
\dim_\rho R(\Gamma,\SL2)_{\mathrm{red}}\leq \dim T_\rho \left(R(\Gamma,\SL2)_{\mathrm{red}} \right) \\ \leq \dim T_\rho R(\Gamma,\SL2) = \dim Z^1(\Gamma,\Ad\rho),
\end{multline*}
where $T_\rho$ denotes the Zariski tangent space at $\rho$ of both 
the variety or the scheme. 
\end{proof}

\begin{Definition}[See \cite{Sikora}]
  We call $\rho\in R(\Gamma,\SL2)$ \emph{scheme smooth} if
\[
\dim_\rho R(\Gamma,\SL2)_\mathrm{red} = \dim Z^1(\Gamma;\Ad\rho),
\]
\end{Definition}

\begin{Example}
\label{ex:finite}
If $\Gamma$ is a finite group, then every  $\rho\in R(\Gamma,\SL2)$ is scheme smooth, because 
$H^1(\Gamma,  \Ad\rho)=0$ and the following lemma.
\end{Example}

\begin{Lemma}
 If $H^1(\Gamma,  \Ad\rho)=0$, then $\rho$ is  scheme smooth.
\end{Lemma}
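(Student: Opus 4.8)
The plan is to sandwich $\dim_\rho R(\Gamma,\SL2)_\mathrm{red}$ between the dimension of the conjugation orbit of $\rho$ (a lower bound) and $\dim Z^1(\Gamma,\Ad\rho)$ (an upper bound), and to observe that the hypothesis $H^1(\Gamma,\Ad\rho)=0$ forces these two bounds to coincide. The upper bound is already available: it is exactly the inequality in Lemma~\ref{lemma:schemesmooth}. So the content is to supply a matching lower bound.

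First I would record the relevant cohomological dimension count. The coboundary map $\sl2\to Z^1(\Gamma,\Ad\rho)$ sending $a$ to the inner crossed morphism $\gamma\mapsto a-\Ad_{\rho(\gamma)}(a)$ has image $B^1(\Gamma,\Ad\rho)$ and kernel the space of invariants $H^0(\Gamma,\Ad\rho)=\sl2^{\rho(\Gamma)}$. Hence $\dim B^1(\Gamma,\Ad\rho)=3-\dim H^0(\Gamma,\Ad\rho)$, and since $H^1\cong Z^1/B^1$ we get $\dim Z^1(\Gamma,\Ad\rho)=\dim B^1(\Gamma,\Ad\rho)+\dim H^1(\Gamma,\Ad\rho)$. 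When $H^1(\Gamma,\Ad\rho)=0$ this collapses to $\dim Z^1(\Gamma,\Ad\rho)=\dim B^1(\Gamma,\Ad\rho)$.

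For the lower bound I would use the conjugation action of $\SL2$ on $R(\Gamma,\SL2)$. The orbit $\mathcal O_\rho=\{g\rho g^{-1}\mid g\in\SL2\}$ is a locally closed, smooth subvariety of the representation variety $R(\Gamma,\SL2)_\mathrm{red}$ passing through $\rho$; its stabilizer is the centralizer of $\rho(\Gamma)$, whose Lie algebra is precisely $H^0(\Gamma,\Ad\rho)$, so $\dim\mathcal O_\rho=3-\dim H^0(\Gamma,\Ad\rho)=\dim B^1(\Gamma,\Ad\rho)$. Concretely, the differential of the orbit map at the identity is exactly the coboundary map above, which identifies the tangent space $T_\rho\mathcal O_\rho$ with $B^1(\Gamma,\Ad\rho)$. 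Since $\mathcal O_\rho\subset R(\Gamma,\SL2)_\mathrm{red}$ and contains $\rho$, the component of the variety through $\rho$ has dimension at least $\dim\mathcal O_\rho$, giving $\dim_\rho R(\Gamma,\SL2)_\mathrm{red}\geq\dim B^1(\Gamma,\Ad\rho)$.

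Combining the lower bound, the upper bound from Lemma~\ref{lemma:schemesmooth}, and the equality $\dim Z^1=\dim B^1$ coming from $H^1=0$ yields
\[
\dim B^1(\Gamma,\Ad\rho)\leq\dim_\rho R(\Gamma,\SL2)_\mathrm{red}\leq\dim Z^1(\Gamma,\Ad\rho)=\dim B^1(\Gamma,\Ad\rho),
\]
so all inequalities are equalities and $\dim_\rho R(\Gamma,\SL2)_\mathrm{red}=\dim Z^1(\Gamma,\Ad\rho)$, which is the definition of scheme smoothness. The step that needs the most care is the geometric input: verifying that the orbit $\mathcal O_\rho$ genuinely sits inside the reduced scheme through $\rho$ and that its tangent space there is identified with the coboundaries $B^1(\Gamma,\Ad\rho)$. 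Once that is in hand, the remainder is a pure dimension count.
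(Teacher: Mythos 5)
Your proof is correct and follows essentially the same route as the paper: the paper also uses that $H^1=0$ makes every cocycle inner, identifies the coboundaries with tangent vectors to the conjugation orbit (via the paths $\exp(ta)\rho\exp(-ta)$), and concludes by sandwiching $\dim_\rho R(\Gamma,\SL2)_{\mathrm{red}}$ between the orbit dimension and the bound of Lemma~\ref{lemma:schemesmooth}. Your version merely makes explicit the dimension count $\dim\mathcal O_\rho=3-\dim H^0(\Gamma,\Ad\rho)=\dim B^1(\Gamma,\Ad\rho)$ that the paper leaves as "it can be checked."
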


\begin{proof}
We use that every $d\in Z^1(\Gamma,\Ad\rho)$ is inner, namely there is $a\in\sl2$
such that $d(\gamma)=a-\Ad_\rho(\gamma) a$ for every $\gamma\in\Gamma$. It can be checked that
$d$ is a vector tangent to the orbit by conjugation, to the path of conjugation  by $\exp(t a)$.
This yields $\dim_\rho R(\Gamma,\SL2)_\mathrm{red} = \dim Z^1(\Gamma,\Ad\rho)$.
\end{proof}

In what follows we call a representation 
$\rho\co\Gamma\to\SL2$ \emph{reduced} and \emph{normal} if the local ring  
 $A(\Gamma)_{\mathfrak m_\rho}$ is reduced and normal, respectively.
 Here $ \mathfrak m_\rho$ denotes the maximal ideal associated to $\rho$.

\medskip

The tangent cone $TC_\rho R(\Gamma,\SL2)$ is the spectrum of the graded $\CC$-algebra associated to the local ring $A(\Gamma)_{\mathfrak m_\rho}$. 
Moreover, the tangent space $T_\rho R(\Gamma,\SL2)$ is the smallest affine subspace which contains the tangent cone  (see \cite[III.\S3]{Mumford_red}). 
From Proposition~\ref{prop:reduced_normal} we obtain:
\begin{Lemma}\label{lem:reduced}
Let $\rho\co\Gamma\to \SL2$ be a representation.

Then $\rho$ is reduced and normal if the tangent cone 
$TC_\rho R(\Gamma,\SL2)$ is reduced and normal, respectively.
\end{Lemma}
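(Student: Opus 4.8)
The plan is to recognize the lemma as an essentially immediate consequence of Proposition~\ref{prop:reduced_normal}, applied to the local ring $B := A(\Gamma)_{\mathfrak m_\rho}$. First I would check that $B$ is a noetherian local ring: the universal algebra $A(\Gamma)$ is a finitely generated $\CC$-algebra, hence noetherian, and localizing a noetherian ring at a maximal ideal produces a noetherian local ring. By the definitions introduced just before the statement, to say that $\rho$ is \emph{reduced} (resp.\ \emph{normal}) is by definition to say that $B$ is reduced (resp.\ normal).

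Second, I would make explicit the coordinate algebra of the tangent cone. By the description preceding the lemma, $TC_\rho R(\Gamma,\SL2) = \big(\operatorname{Spm}(\operatorname{gr}(B)), \operatorname{gr}(B)\big)$, so its coordinate algebra is exactly the associated graded ring $\operatorname{gr}(B)$. Hence, by the conventions of Section~\ref{sec:notation}, the hypothesis that the tangent cone is reduced (resp.\ normal) as a scheme is precisely the statement that the algebra $\operatorname{gr}(B)$ is reduced (resp.\ normal).

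With these two identifications in hand, the two implications fall out of the two parts of Proposition~\ref{prop:reduced_normal}: if $\operatorname{gr}(B)$ is reduced then $B$ is reduced, and if $\operatorname{gr}(B)$ is normal then $B$ is normal. Unwinding the definitions yields exactly that $\rho$ is reduced whenever its tangent cone is reduced, and normal whenever its tangent cone is normal.

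I do not anticipate a genuine obstacle, since the substantive content was already absorbed into Proposition~\ref{prop:reduced_normal}. The only point meriting a word of care is the normal case, where normality tacitly requires $B$ to be an integral domain. This is already encompassed by Proposition~\ref{prop:reduced_normal}, but it is worth recalling why: if $\operatorname{gr}(B)$ is a domain, then for nonzero $a,b \in B$ the initial terms $\operatorname{in}(a)$ and $\operatorname{in}(b)$ are nonzero, so their product $\operatorname{in}(a)\operatorname{in}(b)$ is nonzero in $\operatorname{gr}(B)$; since this product is the class of $ab$ modulo a higher power of $\mathfrak m_\rho$, it follows that $ab \neq 0$, and thus $B$ is a domain as well.
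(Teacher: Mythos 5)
Your proposal is correct and follows exactly the paper's own route: the paper derives Lemma~\ref{lem:reduced} directly from Proposition~\ref{prop:reduced_normal} applied to the noetherian local ring $A(\Gamma)_{\mathfrak m_\rho}$, whose associated graded ring is by definition the coordinate algebra of the tangent cone. Your additional remark on why normality of $\operatorname{gr}(B)$ forces $B$ to be a domain (via multiplicativity of initial terms) is a sound elaboration of a point the paper leaves implicit in its discussion preceding Proposition~\ref{prop:reduced_normal}.
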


\section{Computing the character scheme of a finitely generated group}
 \label{sec:fpg} 
In
this section we prove Theorem~\ref{Thm:ideal}, and
for that purpose we recall 
the definition of the skein algebra. For a finitely generated group we let  $\CC\Gamma$ denote the group ring of $\Gamma$. The tensor algebra 
over $\CC\Gamma$ is denoted by $\mathrm{T}(\CC\Gamma)$.
 
\begin{Definition}
 The \emph{skein algebra} of $\Gamma$ is
 $$\mathcal S _\Gamma =  \mathrm{T}(\CC\Gamma)/\left(   e-2, \ \alpha\otimes\beta-\beta\otimes\alpha,\ \alpha\otimes\beta-\alpha\beta-\alpha\beta^{-1}\mid \alpha,\beta\in\Gamma   \right)\,.
$$
\end{Definition}
The definition of the skein algebra is motivated by the
trace functions identities in equation~\eqref{eq:trace_rel}.
Based on work of Brumfiel and Hilden \cite{BrumfielHilden},
Przytycki and Sikora proved in \cite{PrzytyckiSikora} the following:
\begin{Theorem}[\cite{PrzytyckiSikora}] There is an isomorphism of $\CC$-algebras
$\Phi\co  \mathcal S_\Gamma  \xrightarrow{\cong}  B(\Gamma)$ determined by
$\Phi([\gamma]) = t_\gamma$. 
\end{Theorem}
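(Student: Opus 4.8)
The plan is to construct $\Phi$ using the universal property of the tensor algebra, obtain surjectivity from the first fundamental theorem of invariant theory, and then devote the bulk of the effort to injectivity, which is the substantive point.

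First I would build the homomorphism. The assignment $\gamma\mapsto t_\gamma$ is a $\CC$-linear map $\CC\Gamma\to B(\Gamma)$, and by the universal property of the tensor algebra it extends uniquely to an algebra homomorphism $\widetilde\Phi\co \mathrm{T}(\CC\Gamma)\to B(\Gamma)$ with $\widetilde\Phi(\alpha\otimes\beta)=t_\alpha t_\beta$. To see that $\widetilde\Phi$ factors through $\mathcal{S}_\Gamma$ I would check that the three families of defining relations lie in $\ker\widetilde\Phi$, using only the trace identities \eqref{eq:trace_rel}: the generator $e-2$ maps to $t_e-2=0$; the generator $\alpha\otimes\beta-\beta\otimes\alpha$ maps to $t_\alpha t_\beta-t_\beta t_\alpha=0$ by commutativity of $B(\Gamma)$; and $\alpha\otimes\beta-\alpha\beta-\alpha\beta^{-1}$ maps to $t_\alpha t_\beta-t_{\alpha\beta}-t_{\alpha\beta^{-1}}=0$, which is the Cayley--Hamilton identity recorded in \eqref{eq:trace_rel}. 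Thus $\Phi\co\mathcal{S}_\Gamma\to B(\Gamma)$ is a well-defined $\CC$-algebra homomorphism with $\Phi([\gamma])=t_\gamma$. Surjectivity is then immediate: by the first fundamental theorem the trace functions $t_\gamma$ generate $B(\Gamma)$, and every $t_\gamma=\Phi([\gamma])$ lies in the image.

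Injectivity is the main obstacle. As a preliminary I would use the rewriting form $[\alpha][\beta]=[\alpha\beta]+[\alpha\beta^{-1}]$ of the third relation to show, by induction on word length, that every product of generators collapses to a $\CC$-linear combination of single classes $[\gamma]$; hence $\mathcal{S}_\Gamma$ is spanned as a vector space by $\{[\gamma]\mid\gamma\in\Gamma\}$, with the manifest consequences $[e]=2$ and (setting $\alpha=e$) $[\gamma]=[\gamma^{-1}]$. This reduces the claim to showing that no relation among the trace functions survives beyond those forced by the skein relations, which is precisely the content of the second fundamental theorem for $\SL2$-invariants. The cleanest way to supply this is to invoke Procesi's theorem on trace identities \cite{Procesi87}: over a field of characteristic zero every relation among traces of words in $2\times2$ matrices is generated by the polarized Cayley--Hamilton identity, and for determinant-one matrices Cayley--Hamilton reads $\beta+\beta^{-1}=t_\beta\,\id$, which is exactly the third skein relation. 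Feeding this identity in already produces all relations, so $\ker\Phi=0$.

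The delicate point, and where I expect the real difficulty, is the translation of Procesi's statement --- phrased for $\operatorname{GL}_2$-invariants of tuples of generic matrices --- into the group-ring setting, where one must account for inverses (handled by $\beta+\beta^{-1}=t_\beta\,\id$) and, crucially, for the fact that $B(\Gamma)$ may be non-reduced, so that one needs the scheme-theoretic form of the first and second fundamental theorems rather than their classical reduced versions. Should this invariant-theoretic route prove awkward for non-free $\Gamma$, I would fall back on the approach of Brumfiel and Hilden \cite{BrumfielHilden}: exhibit an explicit normal form for $\mathcal{S}_\Gamma$ --- a spanning family indexed by suitably reduced words in the generators --- and prove that its image is linearly independent in $B(\Gamma)$ by evaluating at a sufficiently rich family of representations, thereby matching $\mathcal{S}_\Gamma$ with $B(\Gamma)$ basis to basis.
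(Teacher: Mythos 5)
Your proposal is sound and follows essentially the same route as the paper: the paper does not prove this theorem itself but imports it from Przytycki--Sikora, noting (following March\'e) that it also follows from Procesi's trace-identity theorem --- exactly the result you invoke for injectivity, with Brumfiel--Hilden as your fallback, which is also the foundation of the original proof. Your explicit verifications (well-definedness of $\Phi$ via the identities \eqref{eq:trace_rel}, surjectivity via the first fundamental theorem, and the collapse of $\mathcal{S}_\Gamma$ onto the span of the classes $[\gamma]$) are correct and constitute the routine part; the substantive injectivity step you rightly locate in the cited literature, just as the paper does.
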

As explained by March\'e in \cite{Marche_skein}, 
the theorem also follows from a more general result of Procesi \cite{Procesi87}.

Recall  that we have a presentation
$
\Gamma=\langle \gamma_1,\ldots,\gamma_n \mid r_l,\ l\in L \rangle
$ as in \eqref{eqn:fpGroup}.
 The natural projection  $$\pi\co  \mathbb{F}_n\to\Gamma$$ 
induces a surjection $\pi_*\co  \mathcal{S}_{\mathbb F_n} \twoheadrightarrow \mathcal{S}_\Gamma$
that factors to an epimorphism
\[
\bar \pi\co  \mathcal{S}_{\mathbb F_n}/\mathcal K \twoheadrightarrow \mathcal{S}_\Gamma ,
\]
where $\mathcal K\subset \mathcal{S}_{\mathbb F_n} \cong B(\mathbb F_n)$ is the   ideal
\[
\mathcal K = \left( 
	  [\alpha]-[\alpha']\mid \alpha\in\mathbb{F}_n,\  \pi(\alpha)=\pi(\alpha')
\right)\,.
\]
\begin{Lemma}  
The map $\bar \pi\co  \mathcal{S}_{\mathbb F_n}/\mathcal K \twoheadrightarrow \mathcal{S}_\Gamma$ is 
an isomorphism of $\CC$-algebras.
\end{Lemma}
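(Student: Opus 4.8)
The plan is to exhibit an explicit two-sided inverse of $\bar\pi$. Since $\bar\pi$ is already known to be an epimorphism, it suffices to produce an algebra homomorphism $\psi\co \mathcal S_\Gamma\to\mathcal S_{\mathbb F_n}/\mathcal K$ with $\bar\pi\circ\psi=\id$ and $\psi\circ\bar\pi=\id$. The whole argument rests on a single observation: modulo $\mathcal K$, the class $[\alpha]$ of a word $\alpha\in\mathbb F_n$ depends only on its image $\pi(\alpha)\in\Gamma$.

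First I would fix a set-theoretic section $s\co\Gamma\to\mathbb F_n$ of $\pi$, so that $\pi\circ s=\id_\Gamma$; no homomorphism property of $s$ is required. Extending $s$ $\CC$-linearly gives a linear map $\CC\Gamma\to\CC\mathbb F_n$, and by functoriality of the tensor algebra this induces an algebra homomorphism $\mathrm T(\CC s)\co\mathrm T(\CC\Gamma)\to\mathrm T(\CC\mathbb F_n)$. Composing with the quotient maps $\mathrm T(\CC\mathbb F_n)\twoheadrightarrow\mathcal S_{\mathbb F_n}\twoheadrightarrow\mathcal S_{\mathbb F_n}/\mathcal K$ yields an algebra homomorphism $\tilde\psi\co\mathrm T(\CC\Gamma)\to\mathcal S_{\mathbb F_n}/\mathcal K$ sending $\gamma\mapsto[s(\gamma)]$.

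The key step is to check that $\tilde\psi$ annihilates the defining ideal of $\mathcal S_\Gamma$, so that it descends to the desired $\psi\co\mathcal S_\Gamma\to\mathcal S_{\mathbb F_n}/\mathcal K$. For the relation $e-2$ one uses $\pi(s(e))=e=\pi(1_{\mathbb F_n})$, whence $[s(e)]=[1_{\mathbb F_n}]=2$ modulo $\mathcal K$. The relation $\alpha\otimes\beta-\beta\otimes\alpha$ follows immediately from the commutativity relation already present in $\mathcal S_{\mathbb F_n}$. The essential case is $\alpha\otimes\beta-\alpha\beta-\alpha\beta^{-1}$: the skein relation in $\mathcal S_{\mathbb F_n}$ gives $[s(\alpha)][s(\beta)]=[s(\alpha)s(\beta)]+[s(\alpha)s(\beta)^{-1}]$, and since $\pi(s(\alpha)s(\beta))=\alpha\beta=\pi(s(\alpha\beta))$ and $\pi(s(\alpha)s(\beta)^{-1})=\alpha\beta^{-1}=\pi(s(\alpha\beta^{-1}))$, the two right-hand terms coincide with $[s(\alpha\beta)]$ and $[s(\alpha\beta^{-1})]$ modulo $\mathcal K$. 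This is precisely where the observation above is used, and it is the only real content of the argument: the section $s$ fails to be multiplicative, but the entire discrepancy lies in $\mathcal K$.

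Finally I would verify that $\psi$ inverts $\bar\pi$, and since both are algebra homomorphisms it is enough to check this on generators. On $[\gamma]\in\mathcal S_\Gamma$ we get $\bar\pi(\psi([\gamma]))=\bar\pi([s(\gamma)])=[\pi(s(\gamma))]=[\gamma]$, so $\bar\pi\circ\psi=\id$. On $[\alpha]\in\mathcal S_{\mathbb F_n}/\mathcal K$ we get $\psi(\bar\pi([\alpha]))=\psi([\pi(\alpha)])=[s(\pi(\alpha))]$, which equals $[\alpha]$ modulo $\mathcal K$ because $\pi(s(\pi(\alpha)))=\pi(\alpha)$; hence $\psi\circ\bar\pi=\id$. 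I expect the only delicate point to be the verification against the third skein relation, where the non-multiplicativity of $s$ must be absorbed into $\mathcal K$; every other step is formal.
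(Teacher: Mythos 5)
Your proposal is correct and follows essentially the same route as the paper: both build the inverse from a set-theoretic section $s\co\Gamma\to\mathbb F_n$, extend it linearly to the tensor algebra, observe that modulo $\mathcal K$ the class $[\alpha]$ depends only on $\pi(\alpha)$ (so the failure of $s$ to be multiplicative is absorbed into $\mathcal K$), and check the two compositions on group classes. Your write-up is merely more explicit than the paper's in verifying that the three defining relations of $\mathcal S_\Gamma$ are annihilated, which the paper compresses into the single assertion $[s(\alpha\beta)]-[s(\alpha)s(\beta)]\in\mathcal K$.
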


\begin{proof}
To construct the inverse, start with   a set-theoretic section $s\co  \Gamma\to\mathbb{F}_n$ of the projection $\pi\co  \mathbb{F}_n\to\Gamma$,
extend it linearly to $\mathrm{ T }(\CC\Gamma)\to \mathrm{ T }(\CC \mathbb{F}_n) $ and compose the extension with the projection to
$\mathcal{S}_{\mathbb F_n}/\mathcal K$.

By construction of $\mathcal K$, we have for all $\alpha,\beta\in \Gamma$,
$s(\alpha\beta)- 	 s(\alpha) s(\beta) \in\mathcal K $, therefore $s$ induces 
a morphism of algebras   
$\bar s\co  \mathcal{S}_\Gamma\to  \mathcal{S}_{\mathbb F_n}/\mathcal K$,
 that satisfies $[\alpha]=\bar s(\bar \pi([\alpha]))$ for all $ \alpha\in \mathbb F_n$ and
$[\beta]=\bar\pi(\bar s([\beta]))$ for all $\beta\in \Gamma$. Since  classes of the elements in the group  span linearly the skein algebra, 
$\bar s$ and $\bar \pi$ are inverses of one another.
\end{proof}

\begin{Corollary}\label{Coro:BF/} Let $
I= \left(  t_\alpha-t_{\beta}\mid \alpha,\beta\in\mathbb{F}_n,\ \pi(\alpha)=\pi(\beta)  \right)
$. Then 
\[
 B(\Gamma)\cong B(\mathbb{F}_n)/ I.
\]
 
\end{Corollary}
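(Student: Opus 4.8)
The plan is to transport the isomorphism of the preceding Lemma across the Przytycki--Sikora isomorphisms, so that the corollary becomes a purely formal consequence. First I would apply the Przytycki--Sikora theorem to the free group $\mathbb{F}_n$ itself, obtaining an isomorphism of $\CC$-algebras $\Phi\co\mathcal{S}_{\mathbb{F}_n}\xrightarrow{\cong}B(\mathbb{F}_n)$ determined by $\Phi([\alpha])=t_\alpha$ for all $\alpha\in\mathbb{F}_n$.

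Next I would compare the two ideals. Because $\Phi$ is an algebra isomorphism, it sends the ideal generated by any subset onto the ideal generated by the images of that subset. The ideal $\mathcal{K}\subset\mathcal{S}_{\mathbb{F}_n}$ is generated by the elements $[\alpha]-[\alpha']$ with $\alpha,\alpha'\in\mathbb{F}_n$ and $\pi(\alpha)=\pi(\alpha')$, and $\Phi([\alpha]-[\alpha'])=t_\alpha-t_{\alpha'}$. These images are exactly the generators of $I$, so $\Phi(\mathcal{K})=I$, and $\Phi$ descends to an isomorphism $\overline{\Phi}\co\mathcal{S}_{\mathbb{F}_n}/\mathcal{K}\xrightarrow{\cong}B(\mathbb{F}_n)/I$.

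Finally I would assemble the chain. The Lemma provides $\bar\pi\co\mathcal{S}_{\mathbb{F}_n}/\mathcal{K}\xrightarrow{\cong}\mathcal{S}_\Gamma$, and the Przytycki--Sikora theorem applied to $\Gamma$ provides $\mathcal{S}_\Gamma\xrightarrow{\cong}B(\Gamma)$. Composing the inverse of $\overline{\Phi}$ with $\bar\pi$ and this last map yields $B(\mathbb{F}_n)/I\cong\mathcal{S}_{\mathbb{F}_n}/\mathcal{K}\cong\mathcal{S}_\Gamma\cong B(\Gamma)$, which is the claimed isomorphism.

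There is no real obstacle here, as the statement is essentially a restatement of the Lemma under the identification $\mathcal{S}_{\mathbb{F}_n}\cong B(\mathbb{F}_n)$. The only point I would take care to spell out is the identity $\Phi(\mathcal{K})=I$: it relies on the fact that an algebra isomorphism carries a generating set of an ideal to a generating set of its image, together with the defining property $\Phi([\gamma])=t_\gamma$, which matches the generators of $\mathcal{K}$ with those of $I$ term by term.
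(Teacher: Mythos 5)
Your proposal is correct and is exactly the argument the paper intends: the corollary is stated without proof precisely because it follows from the Lemma by identifying $\mathcal{S}_{\mathbb{F}_n}\cong B(\mathbb{F}_n)$ via the Przytycki--Sikora isomorphism $\Phi([\alpha])=t_\alpha$, under which $\mathcal{K}$ maps onto $I$, and then composing with $\bar\pi$ and the isomorphism $\mathcal{S}_\Gamma\cong B(\Gamma)$. You have simply made explicit the chain $B(\mathbb{F}_n)/I\cong\mathcal{S}_{\mathbb{F}_n}/\mathcal{K}\cong\mathcal{S}_\Gamma\cong B(\Gamma)$ that the paper leaves implicit.
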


\begin{proof}[Proof of Theorem~\ref{Thm:ideal}]
Using Corollary~\ref{Coro:BF/}, the proof amounts to find the suitable generating set for the ideal
$
I
$.
Given $\alpha,\beta\in\mathbb{F}_n$,
we introduce the element
$$
\Theta({\alpha,\beta})=t_{\alpha\beta}-t_{\alpha},
$$
so that
\[
I=\left( \Theta(\alpha,\beta)\mid  \alpha,\beta\in\mathbb{F}_n,\ \beta\in\ker(\pi)\right).
\]
The proof consists in finding  a generating set for the ideal $I$ according to the statement of the theorem.  
Firstly, as $\ker(\pi)$ is normally
generated by the relations $r_l$, $l\in L$, by using the equalities
\begin{alignat*}{2}
 \Theta(\alpha,\beta_1\beta_2) & = \Theta(\alpha\beta_1,\beta_2)+ \Theta(\alpha,\beta_1), &&\qquad \forall \alpha,\beta_1,\beta_2\in\mathbb F_n, \\
 \Theta(\alpha,\gamma\beta\gamma^{-1} )& = \Theta(\gamma^{-1}\alpha\gamma,\beta), &&\qquad \forall \alpha,\beta,\gamma\in\mathbb F_n,
\end{alignat*}
we get:
\[
I=\left( \Theta(\alpha,r_l)\mid  \alpha\in\mathbb{F}_n,\ l\in L\right).
\]
For an element $\alpha\in \mathbb{F}_n$, let $|\alpha|$ denote 
its word length in the canonical generators. Define
\begin{equation}\label{eq:I_k}
I_k=\left( \Theta(\alpha,r_l)\mid  \alpha\in\mathbb{F}_n,\ \vert\alpha\vert \leq k, \ 
l\in L\right).
\end{equation}
so that $I=\bigcup_k I_k$.
We claim that
\begin{equation}
 \label{eqn:I2=I3}
I_{k+1}=I_k,\quad \textrm{for }k\geq 2 .
\end{equation}
 For that purpose we use an equality due to Vogt  \cite[Lemma 4.1.1]{FicoMontesinos}:
\begin{align}\label{eqn:Vogt}
2 t_{abcd} & =  t_{a} t_{b} t_{c} t_{d} - t_{c} t_{d} t_{ab} - t_{b} t_{c} t_{ad} - t_{a} t_{d} t_{bc} - t_{a} t_{b} t_{cd} \notag\\
&\quad+ t_{ad} t_{bc} - t_{ac} t_{bd} + t_{ab} t_{cd} + t_{d} t_{abc} + t_{c} t_{abd} + t_{b} t_{acd} + t_{a} t_{bcd}\,.
\end{align}
From this equation, we easily deduce 
\begin{multline}
\label{eqn:thetaabcd}
2\Theta(abc,d)=
(t_{abc} -t_a t_{bc}-t_ct_{ab} + t_at_bt_c)\Theta(1,d)\\+(t_{ab}-t_{a}t_{b})\Theta(c,d)
-t_{ac}\Theta(b,d)+(t_{bc}-t_bt_c)\Theta(a,d)\\
+t_a\Theta(bc,d)+t_b\Theta(ac,d)+t_c\Theta(ab,d),
\end{multline}
which implies \eqref{eqn:I2=I3}.

As $I=I_2$, $I$ is generated by $\Theta(1,r)$, $\Theta(\gamma_i^{\pm 1},r)$, and $\Theta(\gamma_i^{\pm 1}\gamma_j^{\pm 1},r)$,
so we just need to get rid of the powers $- 1$ and to reduce to $i<j$.
This is proved by using the equalities 
\begin{align}
 \label{eqn:thetaalphar} \Theta(\alpha, r)& =  t_\alpha\Theta(1,r)-\Theta(\alpha^{-1},r), \\
 \label{eqn:thetaalphabetar} \Theta(\alpha\beta,r)&=  t_\alpha\Theta(\beta,r)-\Theta(\alpha^{-1}\beta,r),
\end{align}
(the first one is obviously a particular case of the second one),
which in its turn follow from the trace identity $t_at_b=t_{ab}+t_{a^{-1}b}$ .
\end{proof}

\begin{proof}[Proof of the Addendum]
To simplify notation, assume that there is only one relation $r$ and decompose it as $r=u^{-1}v$. We prove the equalities
\begin{equation}
\label{eqn:addendum}
\left( t_{ar}-t_{a}\mid a\in \mathbb F_ n\right)   =  \left( t_{bv}-t_{bu}\mid b\in \mathbb F_ n\right) 
  = \left( t_{bv}-t_{bu}\mid b\in\{1,\gamma_i,\gamma_i\gamma_j\}\right).
\end{equation}
The first equality is elementary by writing $a=b u$. The second equality follows from 
\[
 t_{bv}-t_{bu}=\Theta(b,v)-\Theta(b,u)
\]
by applying the same arguments as in the proof of Theorem~\ref{Thm:ideal}, in particular equalities
\eqref{eqn:thetaabcd}, \eqref{eqn:thetaalphar}, and  \eqref{eqn:thetaalphabetar}, allow to reduce the word length of $b$.
Then the addendum follows from \eqref{eqn:addendum}. 
\end{proof}

\begin{Remark}\label{rem:ideal}
The ideal $I_1$ defined in \eqref{eq:I_k} is the ideal considered by Gonz\'{a}lez-Acu\~{n}a and Montesinos-Amilibia  \cite{FicoMontesinos}. 
They proved that the coordinate ring of the representation variety (not the scheme) is isomorphic to 
\[ \big(B(\mathbb{F}_n) / I_1\big)_{red}\cong B(\mathbb{F}_n)/ \operatorname{rad}(I_1)\,.
\]
In Section~\ref{sec:examples} we see that the ideal $I_1$ may 
be non-radical even if the coordinate ring $B(\Gamma)$ is reduced
(whether the ideal $I_1$ is a radical ideal or not may  
depend on the presentation of $\Gamma$).
Thus, when we compute  a scheme, $I_1$  is not sufficient and  we need to 
consider $I_2$.
The following example illustrates that the ideals $I_2$ 
and $I_1$ may be different, even when $B(\Gamma)$ is reduced.
\end{Remark}

 \begin{Example}\label{ex:trefoil}
 Let $\Gamma = \langle a,b\mid r \rangle$, 
 $r=baba^{-1} b^{-1} a^{-1}$,  be the trefoil group.
The algebra $B(\Gamma)$ is a quotient of 
 $B(\mathbb{F}_2) \cong \CC[t_{a},t_{b},t_{ab}]$.
As  $\Gamma$ is a one-relator group, we obtain:
\[
I_{1}=(  t_{r}-2, \ t_{a\, r }-t_{a},\ 
 t_{b\, r }-t_{b})
\  \text{ and }\ 
I_{2}=(  t_{r}-2, \ t_{a\, r }-t_{a},\ 
 t_{b\, r }-t_{b},\ t_{ab\, r }-t_{ab})\,.
\]

Let's compute $I_1$.
By using the trace relations we obtain $ t_{a \, r} =  t_{b}$ and hence 
$
t_{a\, r }-t_{a}=t_{b}-t_{a}
$.
Moreover, we have
\[
t_{b\, r} =  t_{b} t_{r}-  t_{r\, b^{-1}} \quad\text{ and }\quad
t_{r\, b^{-1}} = t_{a}\,.
\]
Therefore, 
\[
t_{b\, r} - t_{b} = t_{b} t_{r}-  t_{a} -t_{b} = t_{b}(t_r -2) + (t_{b} - t_{a}),
\]
and we obtain $I_{1} = \left( t_r-2, t_{a} - t_{b}\right)$.
The trace relations \eqref{eq:trace_rel} give:
\begin{align*}
 t_{baba^{-1} b^{-1} a^{-1}} & = 
 t_{aba}t_{b a b} - t_{(ba)^3}\\
 t_{aba} &= t_{ab} t_{a} - t_{b}\\
 t_{bab} &= t_{ab} t_{b} - t_{a}\\
 t_{(ba)^3} &= t_{ab}^3 -3\,t_{ab}\,,
 \end{align*}
 and therefore
 \[
 t_r-2 = (t_{ab} t_{a} - t_{b}) (t_{ab} t_{b} - t_{a}) - t_{ab}^3 +3\,t_{ab}\,.
 \]
We obtain the following primary decomposition of $I_{1}$:
 \[
 I_{1} = (t_{a} - t_{b}, t_{b}^2 - t_{ab} - 2) \cap 
 (t_{a} - t_{b}, (t_{ab}-1)^2)
 \]
 and $I_{1}$ is clearly not radical (see also \cite{notebook}). Notice that the first ideal corresponds to the characters of non-simple representations 
 $t_{ab^{-1}}-2 = t_{a}t_{b} - t_{ab} -2$.
%
 
 On the other hand, we have that $ab\, r = (ab)b a ba^{-1}(ab)^{-1}$, and  hence
$t_{ab} = t_{b a ba^{-1}}$ 
modulo $I_2$. Now, 
\begin{align*}
t_{ab\, r}-t_{ab} &= t_{b a ba^{-1}} -t_{ab} \\
 &= t_{b a b}t_{a} - t_{(ba)^2}-t_{ab} \\
&= (t_{ab} t_{b} - t_{a}) t_{a} - 
(t_{ab}^2-2)-t_{ab} 
\end{align*}
We obtain the following primary decomposition of $I_{2}$:
 \[
 I_{2} = (t_{a} - t_{b}, t_{b}^2 - t_{ab} - 2) \cap 
 (t_{a} - t_{b}, t_{ab}-1)
 \]
 and $I_{2}$ is radical (see also the notebook \cite{notebook}).
%
\end{Example}

\begin{Remark}
We observe also that $\CC[t_{\gamma_1},t_{\gamma_2},t_{\gamma_1\gamma_2}]/I_{1}$ depends on the presentation and not only on the group.
An explicit example is given by an other presentation of the trefoil group (the group of Example~\ref{ex:trefoil}): the presentation $\langle x, y\mid x^2y^{-3}\rangle$
produces a radical ideal $I_{1}$ (see \cite{notebook} for the computations).
\end{Remark}
  
\begin{Remark}
The ideal $I_3$ is the ideal considered by Miura and Suzuki in \cite{MiuraSuzuki2021}.  They proved that the skein module of a group with three generators and 
two relations (and hence the coordinate ring of the representation scheme) is isomorphic to $B(\mathbb{F}_3) / I_3$. As pointed out in the proof of Theorem~\ref{Thm:ideal}, we have in general that $I_2=I_3$, see  equation~\eqref{eqn:I2=I3}, and therefore it is sufficient to consider less generators of the ideal.
\end{Remark}

\begin{Remark}
Our result was mainly motivated by Corollary~10.1.7 in \cite{Fogg}. This corollary states that any product of matrices (and their inverses) constructed from a given set 
$\{A_1,\ldots,A_n\}\subset\SL2$ can be written as a linear combination of $\id$, $A_i$, $1\leq i\leq n$, and $A_iA_j$, $1\leq i < j\leq n$.
\end{Remark}
 
\section{The scheme of non-simple representations}
\label{sec:nonsimple}

 In concordance with \cite{LubotzkyMagid} we use the term \emph{simple} 
 representation instead of the more common term \emph{irreducible} 
 representation in order to avoid confusion with irreducible algebraic 
 varieties or schemes, as well as reduced or non-reduced schemes.  
 Thus the term \emph{non-simple} in the title of this section would be \emph{reducible} in other papers.
 
\begin{Definition}[\cite{LubotzkyMagid}]
 Let $\rho\co\Gamma\to\SLL_2(\CC)$ be a representation. We say that $\rho$ is 
 \emph{simple}
if the image $\rho(\Gamma)$ spans $M_2(\CC)$ as a $\CC$-vector space.
 \end{Definition}

\begin{Lemma}\label{lemma:123-132}
 Let $\rho\co\Gamma\to\SLL_2(\CC)$ be a representation. Then $\rho$ is simple if and only if there exist
 $\gamma_1$, $\gamma_2$, $\gamma_3$ in $\Gamma$ such that
 \[
  \tr\big(\rho(\gamma_1)\rho(\gamma_2)\rho(\gamma_3)\big)-
  \tr\big(\rho(\gamma_1)\rho(\gamma_3)\rho(\gamma_2)\big) \neq 0\,.
 \] 
\end{Lemma}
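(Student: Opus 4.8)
The plan is to characterize simplicity of $\rho$ via the linear span of $\rho(\Gamma)$ inside $M_2(\CC)$, which is a $4$-dimensional $\CC$-vector space. By definition $\rho$ is simple precisely when $\rho(\Gamma)$ spans all of $M_2(\CC)$. The key observation I would exploit is that $M_2(\CC)$ carries the nondegenerate symmetric bilinear form $\langle A, B\rangle = \tr(AB)$ (the trace form). Relative to this pairing, detecting whether a subspace $V = \operatorname{span}_\CC \rho(\Gamma)$ is all of $M_2(\CC)$ is equivalent to detecting whether the pairing restricted to $V$ is nondegenerate, i.e.\ whether the Gram-type matrix of traces $\tr(\rho(\alpha)\rho(\beta))$, as $\alpha,\beta$ range over elements of $\Gamma$, has full rank. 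So the whole statement becomes a rank/nondegeneracy criterion for this trace form.

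The first step is to prove the direction: if $\rho$ is non-simple then the displayed trace expression vanishes for \emph{all} $\gamma_1,\gamma_2,\gamma_3$. A non-simple representation into $\SLL_2(\CC)$ is (after conjugation, which does not affect traces) upper triangular; equivalently $V=\operatorname{span}_\CC\rho(\Gamma)$ is a proper subalgebra of $M_2(\CC)$ contained in the Borel of upper-triangular matrices. For upper-triangular matrices the product $\rho(\gamma_1)\rho(\gamma_2)\rho(\gamma_3)$ and its reordering $\rho(\gamma_1)\rho(\gamma_3)\rho(\gamma_2)$ have the same diagonal entries (the diagonal of a product of triangular matrices is the product of diagonals, which is symmetric in the factors), hence the same trace. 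This gives the vanishing directly and is the easy direction.

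The harder direction is the converse: assume $\rho$ is simple and produce $\gamma_1,\gamma_2,\gamma_3$ with nonvanishing antisymmetric combination. Here I would argue by contraposition on the span. Since $\rho$ is simple, $V=M_2(\CC)$, so I can choose group elements $g_1=\rho(\gamma_1),\dots$ whose images give a $\CC$-basis of $M_2(\CC)$; in fact it suffices to find three elements $a=\rho(\gamma_1), b=\rho(\gamma_2), c=\rho(\gamma_3)$ together with the identity that span. The quantity $\tr(abc)-\tr(acb)=\tr\big(a[b,c]\big)$ measures the trace form pairing of $a$ against the commutator $[b,c]$. If $\rho$ is simple then $\operatorname{span}\rho(\Gamma)=M_2(\CC)$ contains elements that do not all commute (otherwise the span would be commutative, forcing it into a proper subalgebra, contradicting simplicity), so some $b,c$ satisfy $[b,c]\neq 0$; and since the trace form is nondegenerate and $\{a\}$ ranges over a spanning set, some $a$ pairs nontrivially with $[b,c]$, giving $\tr(a[b,c])\neq 0$, i.e.\ $\tr(abc)\neq\tr(acb)$. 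I expect the main obstacle to be handling this last step cleanly: one must check that the commutator $[b,c]$, which is traceless, is nonzero for a suitable \emph{pair} of images and that nondegeneracy of the trace form then forces a nonzero pairing with some third image $a$; the subtlety is ensuring these three elements can be taken from $\rho(\Gamma)$ rather than from an abstract basis of $M_2(\CC)$, which follows because $\rho(\Gamma)$ already spans.
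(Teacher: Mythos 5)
Your proof is correct, but it takes a genuinely different route from the paper's. The paper handles both directions in one stroke via the decomposition $M=M_0+\tfrac{\tr M}{2}\id$: it proves the identity $\tr(ABC)-\tr(ACB)=2\,\tr(A_0B_0C_0)$ and observes that $(X_0,Y_0,Z_0)\mapsto\tr(X_0Y_0Z_0)$ is an alternating trilinear (determinant-like) form on $\sll_2(\CC)$, hence nonzero precisely on bases; since $\id\in\rho(\Gamma)$, simplicity is equivalent to some triple of traceless parts being a basis of $\sll_2(\CC)$, and the lemma follows. You instead split the equivalence. Your forward direction (simple $\Rightarrow$ nonvanishing) --- rewriting $\tr(abc)-\tr(acb)=\tr\bigl(a[b,c]\bigr)$, extracting a noncommuting pair $b,c\in\rho(\Gamma)$ because a pairwise-commuting set has commutative span, and then using nondegeneracy of the trace form together with the spanning hypothesis to find $a\in\rho(\Gamma)$ with $\tr(a[b,c])\neq 0$ --- is complete and arguably cleaner than the paper's, since it needs no explicit trace identity. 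Your converse direction, however, leans on the assertion that a non-simple representation is conjugate into the upper-triangular matrices. With the paper's definition of simple (the image spans $M_2(\CC)$ as a vector space), this assertion is exactly Burnside's theorem for $M_2(\CC)$, i.e.\ the equivalence of ``proper span'' and ``existence of a common invariant line''; it is true and standard, but it is a genuine theorem that should be cited or proved, whereas the paper's alternating-form argument gets this direction for free (linearly dependent traceless parts have vanishing ``determinant''). One further caveat: your opening framing is imprecise, since $V=M_2(\CC)$ is \emph{not} equivalent to nondegeneracy of the trace form restricted to $V$ (the trace form restricts nondegenerately to the diagonal matrices, a proper subalgebra); fortunately your executed argument never uses this claim --- all it needs is that the nonzero functional $X\mapsto\tr(X[b,c])$ cannot vanish on a spanning set.
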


\begin{proof}
Let $\id\in\SL2$ denote the identity matrix.
 For a matrix $M\in M_2(\mathbb{C})$ we put $M_0 = M-\frac{\tr(M)}2 \id$. Then $\id$, $A$, $B$ and $C$ generate $M_2(\mathbb{C})$ as a $\mathbb{C}$-vector 
 space if and only if $(A_0,B_0,C_0)$ is a $\mathbb{C}$-basis for the subspace $\sll_2(\mathbb{C})$ of trace free matrices of $M_2(\mathbb{C})$.
 Now observe that the map $\sll_2(\mathbb{C})^3\to \mathbb{C}$ given by
 \begin{equation}
\label{eqn:det}
  (X_0,Y_0,Z_0)\mapsto \tr(X_0 Y_0 Z_0)
 \end{equation}
 is a determinant map, 
 e.g.~multilinear and alternating 
 (see Section~V in \cite{Fenchel}).
 
 A direct calculation gives
 \begin{multline}\label{eq:ABC-ACB}
  \tr(ABC) = \tr(A_0B_0C_0) -\frac12
  \big( \tr(A)\tr(BC) \\ + \tr(B)\tr(AC) + \tr(C)\tr(AB) -
  \tr(A)\tr(B)\tr(C)
  \big)\,,
 \end{multline}
 and hence
 \[
  \tr(ABC)-\tr(ACB) = 2\,\tr(A_0B_0C_0)\,.
 \]
 Thus $\tr(ABC)\neq \tr(ACB) $ if and only if 
 $A_0, B_0, C_0$ are linearly independent (equivalently, they span $\sl2$). As $\id\in\rho(\Gamma)$, the lemma follows.
\end{proof}
\begin{Remark}
 Notice that equation~\eqref{eq:ABC-ACB} and the fact that \eqref{eqn:det}
 is a determinant map imply the equality $t_{abc}+t_{acb}= p$ 
 in \eqref{eq:p_and_q}.
\end{Remark}

Motivated by  Lemma~\ref{lemma:123-132} we define:
\begin{Definition}
Let $\Gamma$ be a group, and $B(\Gamma)$ the universal algebra of $\SL2$-characters.
The ideal $\ns{J}\subset B(\Gamma)$ is defined by 
\[
 \ns J= \ns J (\Gamma) := \big( t_{abc} - t_{acb} \mid a,b,c \in \Gamma \big)\,, 
\]
and the quotient
\[
 \ns B(\Gamma) = B(\Gamma)/\ns J
\]
is called the universal algebra of non-simple $\SL2$-characters.
\end{Definition}

\begin{Remark}
 We have 
 \[
  \ns J\subset B(\Gamma) = A(\Gamma)^{\SL2}\subset A(\Gamma)\,,
 \]
and Lemmata 3.4.1, 3.4.2 and Remark~3.4.3 in \cite{Newstead} give that
\begin{align*}
 \ns J A(\Gamma) \bigcap B(\Gamma) &= \ns J\\
 \intertext{ and }
 \big( A(\Gamma)/ \ns J A(\Gamma)\big)^{\SL2} & \cong
 A(\Gamma)^{\SL2} / \ns J = \ns B (\Gamma)\,.
\end{align*}
\end{Remark}

For an abelian group $\Gamma_{0}$ we have $\ns J(\Gamma_{0}) = (0)$  since $abc=acb$ in $\Gamma_0$, and hence $\ns B(\Gamma_{0}) = B(\Gamma_{0})$.
 
\begin{Proposition}
\label{prop:abelianize}
Let $\Gamma$ be a finitely generated group. 
The abelianization morphism $\Gamma\to\Gamma_{ab}$ 
induces an isomorphism of algebras $ B(\Gamma_{ab}) \xrightarrow{\cong} \ns B (\Gamma)$.
\end{Proposition}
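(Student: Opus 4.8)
The plan is to produce an isomorphism $B(\Gamma_{ab}) \xrightarrow{\cong} \ns B(\Gamma)$ by constructing maps in both directions and checking they are mutually inverse. The abelianization $q\co \Gamma \to \Gamma_{ab}$ induces a surjection on skein/character algebras $q^*\co B(\Gamma_{ab}) \to B(\Gamma)$ (functoriality of $B(-)$, exactly as in the proof that $\bar\pi$ is surjective), sending $t_{\gamma}^{\,ab} \mapsto t_{q(\gamma)}$. The key observation is that this map lands in $\ns B(\Gamma)$ after we quotient, because in $\Gamma_{ab}$ we have $abc = acb$ for all $a,b,c$, so every generator $t_{abc}-t_{acb}$ of $\ns J$ maps to zero; equivalently, $q^*$ factors through $B(\Gamma)/\ns J = \ns B(\Gamma)$. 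Thus I first define $\Psi\co B(\Gamma_{ab}) \to \ns B(\Gamma)$ by $\Psi(t_\alpha) = t_{\tilde\alpha} \bmod \ns J$, where $\tilde\alpha \in \Gamma$ is any lift of $\alpha \in \Gamma_{ab}$; well-definedness of the lift choice follows because two lifts differ by an element of the commutator subgroup, and I would show that trace functions of such elements agree modulo $\ns J$.

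The heart of the argument, and what I expect to be the main obstacle, is constructing the inverse $\Phi\co \ns B(\Gamma) \to B(\Gamma_{ab})$, which amounts to proving that the composite $B(\Gamma) \to B(\Gamma)/\ns J = \ns B(\Gamma)$ kills exactly the difference between a trace function and the trace function of the ``abelianized'' word. Concretely, I would show that modulo $\ns J$ one has $t_{ab} \equiv t_{ba}$ (which is free) but more importantly that $\ns J$ forces all the noncommutativity to collapse: the relation $t_{abc} \equiv t_{acb} \pmod{\ns J}$, together with the determinant/trace identity \eqref{eq:ABC-ACB}, lets one reorder letters inside any trace. The plan is to prove by induction on word length that for any $\gamma \in \Gamma$, $t_\gamma \equiv t_{\gamma'} \pmod{\ns J}$ whenever $\gamma$ and $\gamma'$ have the same image in $\Gamma_{ab}$, using \eqref{eq:trace_rel} and the swap relation to transpose adjacent generators one at a time. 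This shows the natural surjection $B(\Gamma) \to \ns B(\Gamma)$ factors through $B(\Gamma) \twoheadrightarrow B(\Gamma_{ab})$, giving a well-defined $\Phi$ on the quotient.

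Once both $\Psi$ and $\Phi$ are defined on generators $t_\gamma$ (which span the respective algebras by the first fundamental theorem cited after \eqref{eq:trace_rel}), checking they are inverse is immediate on generators: $\Phi\circ\Psi$ and $\Psi\circ\Phi$ both fix each $t_\gamma$, and since trace functions generate, both composites are the identity. The subtle point requiring care is that these are maps of \emph{schemes} (possibly non-reduced algebras), so I must verify the \emph{algebra} homomorphisms are well defined at the level of rings, not merely that the underlying point maps agree; this is why the inductive reduction modulo $\ns J$ above must be carried out as an algebraic identity in $B(\Gamma)$, not just as an equality of functions on characters. I would therefore phrase the whole argument in terms of the skein presentation, where $\ns J$ is generated by the elements $[abc]-[acb]$ and the transposition identities are manifestly algebraic consequences of the defining skein relations $\alpha\otimes\beta - \alpha\beta - \alpha\beta^{-1}$ and $\alpha\otimes\beta - \beta\otimes\alpha$.

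The main obstacle, then, is the inductive transposition lemma: showing rigorously that modulo $\ns J$ every trace function depends only on the exponent sums of the generators (i.e.\ only on the image in $\Gamma_{ab}$). The base cases and the reordering of a single adjacent pair follow from \eqref{eq:ABC-ACB}, but propagating this through arbitrary words while controlling the correction terms — which themselves involve shorter trace functions that must be handled by the inductive hypothesis — is the delicate bookkeeping step. Everything else is formal, relying on functoriality of $B(-)$ and the fact that trace functions generate.
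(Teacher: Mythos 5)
Your overall strategy is sound and is, at bottom, the same as the paper's: everything reduces to showing that $\ns J$ is exactly the kernel of the natural map $B(\Gamma)\to B(\Gamma_{ab})$. Two formal slips first. Functoriality of $B(-)$ is covariant with group morphisms, so the abelianization $q\co\Gamma\to\Gamma_{ab}$ induces $B(\Gamma)\to B(\Gamma_{ab})$, $t_\gamma\mapsto t_{q(\gamma)}$, not a map $B(\Gamma_{ab})\to B(\Gamma)$ as in your opening sentence; it is this forward map that kills every generator $t_{abc}-t_{acb}$ of $\ns J$ and therefore factors through $\ns B(\Gamma)$, which hands you $\Phi$ for free. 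Your $\Psi$, prescribed only on trace functions via lifts, is a well-defined algebra homomorphism only after all relations of $B(\Gamma_{ab})$ are checked; your instinct to run this through the skein presentation with a set-theoretic section (as in the paper's lemma identifying $\mathcal S_{\mathbb F_n}/\mathcal K$ with $\mathcal S_\Gamma$) is the correct fix, and that check reduces again to your key congruence, so the congruence really is the whole content.

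The genuine gap is in how that key congruence is formulated. Inducting on word length and transposing adjacent generators proves $t_\gamma\equiv t_{\gamma'}\pmod{\ns J}$ when $\gamma,\gamma'$ admit word representatives with equal exponent sums in $\mathbb F_n$; but $\Gamma_{ab}$ is in general a proper quotient of $\ZZ^n$, so two elements with the same image in $\Gamma_{ab}$ need not have such representatives. The repair is to induct instead on $k$ in an expression $\gamma'=[a_1,b_1]\cdots[a_k,b_k]\,\gamma$ with $a_i,b_i\in\Gamma$ (same image in $\Gamma_{ab}$ means $\gamma'\gamma^{-1}\in[\Gamma,\Gamma]$). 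Moreover, the step you fear as ``delicate bookkeeping'' has no correction terms at all: for $u,s,t,v\in\Gamma$, cyclic invariance of the trace (an exact identity from \eqref{eq:trace_rel}, not a congruence) gives
\begin{equation*}
t_{ustv}-t_{utsv}=t_{(vu)st}-t_{(vu)ts}\in \ns J ,
\end{equation*}
a generator on the nose; applying this once to $[b,a]d=ba\,b^{-1}a^{-1}d$ (swap the adjacent letters $b^{-1}$ and $a^{-1}$) yields $t_{[b,a]d}\equiv t_d$, so neither \eqref{eq:ABC-ACB} nor any expansion producing shorter words is needed. Finally, to get injectivity of $\Phi$ (equivalently, that $\Phi$ and $\Psi$ are inverse) you also need that $\ker\big(B(\Gamma)\to B(\Gamma_{ab})\big)$ is generated by the differences $t_\alpha-t_\beta$ with $q(\alpha)=q(\beta)$; this is Corollary~\ref{Coro:BF/} transported verbatim to the surjection $\Gamma\to\Gamma_{ab}$, a step your write-up uses only implicitly. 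For comparison, the paper compresses all of this into one substitution: since $t_{acb}=t_{bac}$, setting $c=b^{-1}a^{-1}d$ gives the equality of sets $\{t_{abc}-t_{bac}\}=\{t_d-t_{[b,a]d}\}$, whose two sides generate $\ns J$ and the kernel of the abelianization map (by Theorem~\ref{Thm:ideal}), respectively. Your plan, once repaired as above, is a correct but longer route to the same identity.
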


\begin{proof}
By writing $c=b^{-1}a^{-1}d$, we have an equality of sets
 $$
  \{t_{abc} - t_{bac} \mid a,b,c \in \Gamma\}= \{t_{d} - t_{[b,a]d} \mid a,b,d \in \Gamma\}
 $$
So both sets generate the same ideal in $B(\Gamma)$. The left hand side set spans  $\ns J$, the right hand side set
spans the ideal of the abelianization, by Theorem~\ref{Thm:ideal}.
\end{proof}

\section{Abelian groups}
\label{sec:abelian}

Throughout this  section   $\Gamma$ denotes a finitely generated abelian group.
Przytycki and Sikora proved that the skein algebra of an abelian group is 
reduced \cite{PrzytyckiSikoraBanach,PrzytyckiSikora},  hence its character scheme  is also reduced.
In \cite{Sikora_abelian} Sikora described the variety of characters when $\Gamma$ is torsion free.
Here we discuss the case with torsion, and we show in particular that $B(\Gamma)$ is reduced with completely different methods.

Let $\beta$ denote the first Betti number of $\Gamma$ and $T$ its torsion subgroup, so that 
we have a short exact sequence 
$$
1\to T\to \Gamma\to  \mathbb{Z}^\beta\to 1
$$
with $T$ finite. The
scheme of characters of $T$ is a variety with finite cardinality. 
For each $\chi\in X(T,\SL2)$,
set 
$$
X(\Gamma,\SL2)_\chi=\mathrm{res}^{-1} (\chi)
$$
where $\mathrm{res}\colon X(\Gamma,\SL2)\to X(T,\SL2)$ is the map induced by restriction.

\begin{Theorem}
\label{thm:abelian} For $\Gamma$ an abelian group as above, 
$X(\Gamma,\SL2)$ is reduced and 
 $$
 X(\Gamma,\SL2)=\bigcup\limits_{ \chi\in X(T,\SL2) }X(\Gamma,\SL2)_\chi
 $$
 is its decomposition into irreducible components. Furthermore
 $$
 X(\Gamma,\SL2)_\chi\cong\begin{cases}
                     (\CC^*)^\beta & \textrm{if }\chi \textrm{ is not central}\\
                     (\CC^*)^\beta/\sim & \textrm{if }\chi \textrm{ is  central}
                    \end{cases}
 $$
 where 
$(\lambda_1,\ldots,\lambda_\beta)\sim (\frac{1}{\lambda_1}, \ldots,\frac{1}{\lambda_\beta})$,
for $\lambda_1,\ldots,\lambda_\beta\in   \CC^* $.
\end{Theorem}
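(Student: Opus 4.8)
The plan is to first pin down the underlying variety together with its components, and then to upgrade this to a scheme-theoretic statement by a local analysis. Since $\Gamma$ is finitely generated abelian, I would split it as $\Gamma\cong\ZZ^\beta\oplus T$. The closed points of $X(\Gamma,\SL2)$ are the characters of the closed orbits in $R(\Gamma,\SL2)$, that is, of the semisimple representations; for an abelian group these are exactly the diagonal representations, because a $2$-dimensional semisimple representation of an abelian group is a sum of two characters $\lambda,\lambda^{-1}\co\Gamma\to\CC^*$ (the determinant being $1$). Thus $|X(\Gamma,\SL2)|$ is in bijection with $\mathrm{Hom}(\Gamma,\CC^*)/W$, where $W=\ZZ/2$ acts by inversion $\lambda\mapsto\lambda^{-1}$ and the character is recovered through $t_\gamma=\lambda(\gamma)+\lambda(\gamma)^{-1}$. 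Using $\mathrm{Hom}(\Gamma,\CC^*)\cong(\CC^*)^\beta\times\widehat T$, the coordinate ring of this quotient is the $W$-invariant subalgebra $\CC[\Gamma]^W$ of the group algebra, which is reduced as a subring of the reduced ring $\CC[\Gamma]$.

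Next I would read off the irreducible components. The restriction map $\mathrm{res}\co X(\Gamma,\SL2)\to X(T,\SL2)$ corresponds, under the identification above, to the projection $(\CC^*)^\beta\times\widehat T\to\widehat T$ followed by the quotient $\widehat T\to\widehat T/W=X(T,\SL2)$. A character of $T$ is central exactly when the corresponding $\hat t\in\widehat T$ satisfies $\hat t=\hat t^{-1}$, i.e.\ is a $W$-fixed point. Computing the fibre over $\chi$ then gives the two stated cases: when $\chi$ is non-central the two components $(\CC^*)^\beta\times\{\hat t\}$ and $(\CC^*)^\beta\times\{\hat t^{-1}\}$ are interchanged by $W$ and the quotient is a single copy of $(\CC^*)^\beta$, while when $\chi$ is central the single component $(\CC^*)^\beta\times\{\hat t\}$ is preserved and $W$ acts by $(\lambda_1,\dots,\lambda_\beta)\mapsto(\lambda_1^{-1},\dots,\lambda_\beta^{-1})$, giving $(\CC^*)^\beta/\!\sim$. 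Each fibre is the image of an irreducible variety, hence irreducible; distinct fibres are disjoint because they have different restrictions to $T$; and since $X(T,\SL2)$ is finite these finitely many fibres are clopen, so they are exactly the connected, and therefore the irreducible, components.

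The main work is to prove that the \emph{scheme} $X(\Gamma,\SL2)$ is reduced, i.e.\ that the natural map $B(\Gamma)\to\CC[\Gamma]^W$ induced by the diagonal representation $\gamma\mapsto\left(\begin{smallmatrix}\gamma&0\\0&\gamma^{-1}\end{smallmatrix}\right)$ over $\CC[\Gamma]$ (which is onto since the trace functions generate $B(\Gamma)$ and $t_\gamma\mapsto\gamma+\gamma^{-1}$) is an isomorphism. Since the components are clopen, $B(\Gamma)$ splits as a finite product over $\chi\in X(T,\SL2)$, and by Lemma~\ref{lem:loc_to_glob_red} it suffices to check that each localization $B(\Gamma)_{\mathfrak m_\chi}$ is reduced. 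Away from the $W$-fixed points the variety is smooth, and I would verify that the Zariski tangent space of the scheme has dimension $\beta$, so that the scheme is reduced there.

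The hard part, which I expect to be the main obstacle, is the behaviour at the central characters, where the underlying variety $(\CC^*)^\beta/\!\sim$ is singular at the finitely many points with all $\lambda_i=\pm1$ (for $\beta\geq 2$): there the tangent space is strictly larger than $\beta$, so smoothness is unavailable and one must instead compute the tangent cone and invoke Proposition~\ref{prop:reduced_normal}. Concretely, after a local change of coordinates the quotient looks like $\CC^\beta/\{\pm1\}$, whose coordinate ring is the reduced and normal algebra of even functions; the crux is to show that the tangent cone of the scheme $X(\Gamma,\SL2)$ at such a point agrees with this, so that Proposition~\ref{prop:reduced_normal} yields reducedness (indeed normality) of the local ring and completes the proof.
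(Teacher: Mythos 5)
Your variety-level analysis (closed points $=\mathrm{Hom}(\Gamma,\CC^*)/W$, components indexed by $X(T,\SL2)$, the two cases according to whether $\chi$ is a $W$-fixed point) is correct and matches what the paper obtains, and you have correctly isolated that everything hinges on proving that $B(\Gamma)$ is reduced. But that is precisely the part your proposal does not prove, and the two local computations you defer are not accessible with the tools you invoke. First, at non-central characters you say you ``would verify that the Zariski tangent space of the scheme has dimension $\beta$''; the paper's only tool for computing tangent spaces of the character scheme, Theorem~\ref{thm:H1ZT}, requires $\rho$ to be simple, and no $\SL2$-representation of an abelian group is simple (commuting matrices share an eigenvector). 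So within the framework you set up there is no way to carry out this verification directly on $X(\Gamma,\SL2)$: the link between $T_\chi X$ and group cohomology genuinely breaks down at non-simple representations, and the tangent map from the representation scheme to the quotient need not be surjective. Second, at central characters you explicitly leave ``the crux'' --- that the tangent cone of the scheme $X(\Gamma,\SL2)$ is the (reduced, normal) algebra of even functions --- unproven; note that the tangent cone of a GIT quotient is not in general the quotient of the tangent cone, so this step cannot simply be asserted.

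The paper circumvents both obstacles by doing all the local work on the representation scheme $R(\Gamma,\SL2)$ rather than on $X(\Gamma,\SL2)$: there the Zariski tangent space is always $Z^1(\Gamma,\Ad\rho)$ (Weil's theorem, no simplicity needed), and the dimension bound $\dim Z^1(\Gamma,\Ad\rho)\leq\beta+2$ of Lemma~\ref{lemma:dimz1} gives scheme smoothness, hence reducedness, at all non-central representations; at central representations the tangent cone of $R(\Gamma,\SL2)$ is computed via the Baker--Campbell--Hausdorff formula to be the determinantal cone $\hom_1(\Gamma,\sl2)$ (Lemma~\ref{Lemma:QuadraticCone}), which is reduced and normal, so Proposition~\ref{prop:reduced_normal} applies. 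Reducedness and normality then descend for free to $B(\Gamma)=A(\Gamma)^{\SL2}$ (a subring of a reduced ring is reduced; invariants of a normal ring are normal), and normality is exactly what allows Zariski's main theorem to promote the regular bijections $(\CC^*)^\beta\to X(\Gamma,\SL2)_\chi$, respectively $(\CC^*)^\beta/\!\sim\;\to X(\Gamma,\SL2)_\chi$, to isomorphisms --- a step your outline also needs but does not address. To rescue your plan, transplant your two local steps from $X$ to $R$, exactly as the paper does.
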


When $\chi\in X(T,\SL2)$ is central, a point in $X(\Gamma,\SL2)_\chi$ is the character of a representation
that maps the generators of  $\mathbb Z^\beta$ to diagonal matrices with eigenvalues 
$\lambda_i^{\pm 1}$. Thus the action of the involution is the action of the Weyl group.
In particular:

\begin{Corollary}
The singular locus of $X(\Gamma,\SL2)$ is the set of  central characters of $\Gamma$. 
\end{Corollary}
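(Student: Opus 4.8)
The plan is to read off the singular locus directly from the description of the irreducible components furnished by Theorem~\ref{thm:abelian}. First I would observe that the components $X(\Gamma,\SL2)_\chi=\mathrm{res}^{-1}(\chi)$ are pairwise disjoint: they are the fibres of the restriction morphism $\mathrm{res}\colon X(\Gamma,\SL2)\to X(T,\SL2)$ over the finitely many (reduced) points of $X(T,\SL2)$, and hence each is simultaneously open and closed. Consequently $X(\Gamma,\SL2)$ is a disjoint union of its components, so its singular locus is simply the union of the singular loci of the individual $X(\Gamma,\SL2)_\chi$; no singular points arise from intersections of distinct components.

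For $\chi$ not central, $X(\Gamma,\SL2)_\chi\cong(\CC^*)^\beta$ is a torus, hence smooth, and contributes nothing. For $\chi$ central, $X(\Gamma,\SL2)_\chi\cong(\CC^*)^\beta/\!\sim$ is the quotient of the torus by the involution $\iota\colon(\lambda_1,\ldots,\lambda_\beta)\mapsto(\lambda_1^{-1},\ldots,\lambda_\beta^{-1})$, which is the Weyl-group action noted after Theorem~\ref{thm:abelian}. I would next check that $\iota$ acts freely away from its fixed-point set $F=\{(\lambda_i)\mid \lambda_i\in\{\pm1\}\}$, a set of $2^\beta$ points; away from $F$ the quotient map is \'etale, so the quotient is smooth there, and every singular point must be the image of a fixed point.

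It then remains to analyse the quotient singularity at a point of $F$. There $\iota$ acts on the Zariski tangent space $\CC^\beta$ as $-\mathrm{id}$ (writing $\lambda_i=\pm e^{u_i}$ the involution becomes $u_i\mapsto -u_i$), so the local model is $\CC^\beta/\{\pm\mathrm{id}\}$, whose ring of invariants is generated by the quadratic monomials $u_iu_j$. A dimension count---$\dim(\mathfrak m/\mathfrak m^2)=\binom{\beta+1}{2}$ versus $\dim=\beta$---shows the point is singular exactly when $\beta\geq 2$ (for $\beta\leq 1$ the coarse quotient is smooth and the singular locus is empty). Finally I would identify $F$ with the central characters of $\Gamma$: a fixed point is the character of a representation sending the generators of $\ZZ^\beta$ to $\pm\mathrm{id}$ and $T$ through a $2$-torsion character into $\{\pm\mathrm{id}\}$, i.e.\ a representation with image in the centre; conversely every central character of $\Gamma$ restricts to a central (hence $2$-torsion) character of $T$ and lies over such a fixed point. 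This yields the asserted equality of the singular locus with the set of central characters.

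The main obstacle I expect is the local computation of the third step: verifying that $\iota$ acts as $-\mathrm{id}$ on the tangent space to $(\CC^*)^\beta$ at each fixed point and correctly reading off the resulting quotient singularity, while keeping track of the degenerate cases $\beta\leq 1$, in which the coarse quotient $(\CC^*)^\beta/\!\sim$ is already smooth, so that the statement is to be understood for $\beta\geq 2$.
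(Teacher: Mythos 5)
Your proof is correct and follows essentially the same route as the paper, which treats the corollary as immediate from Theorem~\ref{thm:abelian}: the non-central components are smooth tori, while the central components are quotients of $(\CC^*)^\beta$ by the Weyl involution, whose singular points are exactly the images of the fixed points, i.e.\ the central characters. Your explicit local analysis (disjointness of the fibres, \'etale off the fixed locus, local model $\CC^\beta/\{\pm\mathrm{id}\}$ with embedding dimension $\binom{\beta+1}{2}$) fills in details the paper leaves implicit, and your caveat for $\beta\leq 1$ --- where $(\CC^*)^\beta/\!\sim$ is already smooth, so the singular locus is empty while central characters still exist --- is a genuine qualification that the paper's statement omits.
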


To prepare the proof of Theorem~\ref{thm:abelian}, we need a lemma on the Zariski tangent space.

\begin{Lemma}\label{lemma:dimz1}
 If $\rho\in R(\Gamma,\SL2)$ is non-central, then $\dim Z^1(\Gamma,\mathrm{Ad}\rho)\leq \beta+2$.
\end{Lemma}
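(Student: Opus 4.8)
The plan is to bound $\dim Z^1(\Gamma,\Ad\rho)$ directly from the structure of $\Gamma$ and the non-centrality of $\rho$. Recall that $\Gamma$ is a finitely generated abelian group sitting in the exact sequence $1\to T\to\Gamma\to\ZZ^\beta\to 1$ with $T$ finite, and write $\Gamma_{ab}=\Gamma$. A $1$-cocycle $d\in Z^1(\Gamma,\Ad\rho)$ is determined by its values $d(\gamma_1),\ldots,d(\gamma_m)$ on a generating set, subject to the cocycle condition for every relation. The idea is that the abelian structure forces strong commutation constraints, and non-centrality means $\rho$ has at least one generator acting with a nontrivial eigenvalue decomposition on $\sl2$, which cuts down the space of admissible cocycle values.

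First I would fix a non-central element $\gamma_0\in\Gamma$ so that $\Ad_{\rho(\gamma_0)}$ is a nontrivial element of $\SLL(\sl2)$; since $\rho(\gamma_0)$ is (up to conjugacy) diagonalizable or parabolic, $\Ad_{\rho(\gamma_0)}$ has a fixed subspace in $\sl2$ of dimension at most $2$, and in the generic diagonal case dimension exactly $1$. For any $\gamma$ commuting with $\gamma_0$ (which, $\Gamma$ being abelian, is \emph{every} $\gamma$), the cocycle relation applied to $\gamma\gamma_0=\gamma_0\gamma$ yields
\[
d(\gamma)+\Ad_{\rho(\gamma)}d(\gamma_0)=d(\gamma_0)+\Ad_{\rho(\gamma_0)}d(\gamma),
\]
so that
\[
\big(\id-\Ad_{\rho(\gamma_0)}\big)d(\gamma)=\big(\id-\Ad_{\rho(\gamma)}\big)d(\gamma_0).
\]
This is the key commutation identity. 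Because $\rho$ is non-central, I can arrange (after diagonalizing $\rho(\gamma_0)$) that $\id-\Ad_{\rho(\gamma_0)}$ has rank at least $2$ on $\sl2$, so its kernel has dimension at most $1$; the identity then pins down $d(\gamma)$ modulo this one-dimensional kernel once $d(\gamma_0)$ is chosen. Chasing this through a generating set, the cocycle is determined by the single value $d(\gamma_0)\in\sl2$ (contributing at most $\dim\sl2=3$, but really parametrized more economically) together with one free scalar per generator lying in the $1$-dimensional kernel of $\id-\Ad_{\rho(\gamma_0)}$.

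To make the count come out to $\beta+2$, I would organize the generators as $\beta$ free generators plus torsion generators, and observe that on the torsion part the eigenvalues $\lambda_i$ are roots of unity, so the corresponding free scalars either vanish or are constrained by the finite order, while the $\beta$ free directions each contribute exactly one free scalar in the common eigenline of all the $\Ad_{\rho(\gamma_i)}$; adding the at-most-$2$ dimensions coming from the choice of $d(\gamma_0)$ transverse to inner cocycles gives the bound $\beta+2$. Concretely, after simultaneously diagonalizing the (commuting) images, $\sl2$ decomposes into a weight-$0$ line $\CC H$ and weight $\pm$ lines $\CC E,\CC F$; a cocycle is free on the $H$-component (giving a homomorphism $\Gamma\to\CC$, i.e. $\beta$ dimensions since $T$ is finite) and is rigid on the $E,F$-components except for the two-dimensional contribution of inner cocycles, which accounts for the remaining $+2$.

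The main obstacle I anticipate is the degenerate case where $\rho(\gamma_0)$ is \emph{parabolic} rather than diagonalizable: then $\Ad_{\rho(\gamma_0)}$ is unipotent, the weight decomposition collapses, and $\id-\Ad_{\rho(\gamma_0)}$ has a two-dimensional kernel, so the naive count could exceed $\beta+2$. Here I would use that $\Gamma$ is abelian to force \emph{all} images into a common Borel (upper triangular) subgroup, and analyze the cocycle space directly in terms of the upper-triangular and strictly-upper-triangular weight pieces, checking that the extra freedom in the unipotent direction is exactly compensated by the failure of the torsion generators to contribute. I expect this parabolic case to require the most care, and it is where the hypothesis ``non-central'' (as opposed to ``simple'') is genuinely used: centrality is excluded precisely so that at least one $\Ad_{\rho(\gamma)}\neq\id$, which is what prevents the cocycle space from being all of $\sl2^{\,\beta}$ and keeps the dimension at $\beta+2$.
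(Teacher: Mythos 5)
Your key identity $\bigl(\id-\Ad_{\rho(\gamma_0)}\bigr)d(\gamma)=\bigl(\id-\Ad_{\rho(\gamma)}\bigr)d(\gamma_0)$ is exactly the one the paper's proof is built on, and your count in the diagonalizable case is correct: the $H$-component of a cocycle is a homomorphism $\Gamma\to\CC$ (dimension $\beta$, the torsion dying because $\CC$ is torsion-free), while each of the $E$- and $F$-components is a cocycle valued in a one-dimensional module with nontrivial character, hence lies in a space of dimension one spanned by coboundaries; this gives $\beta+2$. The genuine gap is the parabolic case, which you flag but do not close; worse, the route you propose for closing it rests on a false premise. For a non-central parabolic $g$ the kernel of $\id-\Ad_g$ is \emph{not} two-dimensional: for every non-central $g\in\SL2$, parabolic or not, this kernel is the centralizer of $g$ in $\sl2$, i.e.\ the tangent line to the one-parameter subgroup through $g$, and it is exactly one-dimensional. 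Concretely, for $g=\left(\begin{smallmatrix}1&1\\0&1\end{smallmatrix}\right)$ one computes $(\Ad_g-\id)E=0$, $(\Ad_g-\id)H=-2E$, $(\Ad_g-\id)F=H-E$, so $\Ad_g-\id$ has rank $2$ and kernel $\CC E$.

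Once you have this, no Borel-subgroup analysis or ``compensation'' argument is needed: the commutation identity plus the rank-$2$ fact pins down $d(\gamma)$ modulo a one-dimensional space for every $\gamma$, uniformly in the diagonalizable and parabolic cases. That is precisely how the paper argues: it never diagonalizes, but instead splits according to whether $\rho|_T$ is central. If it is, one may take the non-central element $\gamma_0=\gamma_1$ among lifts of a basis of $\ZZ^\beta$; the commutation identities for $i=2,\dots,\beta$ place $Z^1(\Gamma,\Ad\rho)$ inside the kernel of a linear map $\sl2^\beta\to\sl2^{\beta-1}$ of rank $2(\beta-1)$, and $Z^1(T,\Ad\rho|_T)=0$, giving the bound $3\beta-2(\beta-1)=\beta+2$. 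If $\rho|_T$ is non-central, one takes $\gamma_0\in T$; now $\dim Z^1(T,\Ad\rho|_T)=2$ (it equals $\dim B^1(T,\Ad\rho|_T)$ because $H^1(T,\Ad\rho|_T)=0$ for finite $T$), and one gets a linear map $\sl2^\beta\oplus\CC^2\to\sl2^\beta$ of rank $2\beta$, giving $3\beta+2-2\beta=\beta+2$. This cohomological vanishing for finite $T$ is also the clean replacement for your vague remark that the torsion scalars ``either vanish or are constrained by the finite order.'' So your proof is incomplete exactly where you predicted it would be; with the corrected rank computation the parabolic case merges with the rest, and your argument becomes the paper's.
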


\begin{proof}[Proof of Lemma~\ref{lemma:dimz1}]
We may assume $\beta\geq 1$, as the finite case has been considered in Example~\ref{ex:finite}.
Chose elements  $ \gamma_1,\ldots, \gamma_\beta\in \Gamma$ so that they project to a generating set of 
$\Gamma/T\cong  \mathbb{Z}^\beta$. 
This choice  yields a splitting  $\Gamma\cong \mathbb{Z}^\beta\times T$. 
To bound the  dimension of the space of 1-cocycles, consider the monomorphism of vector spaces:
\begin{equation}
 \label{eqn:embedd}
\begin{array}{rcl}
  Z^1(\Gamma,\mathrm{Ad}\rho) & \to & \sl2\times\overset{(\beta)}\cdots \times \sl2\times Z^1(T,\mathrm{Ad}\rho\vert_T)\\
  d &\mapsto & (d ( \gamma_1),\ldots, d ( \gamma_\beta), d\vert_T)
\end{array} 
\end{equation}
where $\rho\vert_T$ and $d\vert_T\in Z^1(T,\mathrm{Ad}\rho\vert_T)$ denote the respective restrictions to $T$ 
of $\rho$ and  $d$. 
Since $T$ is finite, $H^1(T,\mathrm{Ad}\rho\vert_T)=0$ and therefore
\begin{equation}
 \label{eqn:dimZ1T}
\dim Z^1(T,\mathrm{Ad}\rho\vert_T)=B^1(T,\mathrm{Ad}\rho\vert_T)=\begin{cases}
                                         0 & \textrm{if }\rho\vert_T \textrm{ is central}\\
                                         2 & \textrm{otherwise}
                                        \end{cases}
\end{equation}
because $\dim B^1(T,\mathrm{Ad}\rho\vert_T)=\dim\sl2-\dim \sl2^{\mathrm{Ad}\rho(T)}$.

Next we distinguish cases, according to whether the restriction $\rho\vert_T$ is central or not.

When the restriction
$\rho\vert_T$ is central, we may assume that $\rho( \gamma_1)$ is non-central. Then for every
$d\in Z^1(\Gamma,\mathrm{Ad}\rho)$, from  $ \gamma_1 \gamma_i= \gamma_i \gamma_1$ we get (following the rules of crossed homomorphism or Fox calculus):
\begin{equation}
\label{eqn:comm}
 (\mathrm{Ad}_{\rho(\gamma_1)}-{\id})d(\gamma_i)=
 (\mathrm{Ad}_{\rho(\gamma_i)}-{\id})d(\gamma_1)
\end{equation}
As $\rho(\gamma_1)$ is non-central, 
$
 \operatorname{rank} 
 (\mathrm{Ad}_{\rho(\gamma_1)}-{\id}) =2
$ (because the kernel of $(\mathrm{Ad}_{\rho(\gamma_1)}-{\id}) $
is the tangent line to the 1-parameter group containing  $\rho(\gamma_1)$). 
Hence, 
from \eqref{eqn:comm} applied to $i=2,\ldots,\beta$ and using \eqref{eqn:dimZ1T},
the image of \eqref{eqn:embedd} is contained in the kernel of a linear map
$\sl2^{\beta}\to  \sl2^{\beta-1}$ of rank $ 2(\beta-1)$, and we get the bound
$\dim Z^1(\Gamma,\mathrm{Ad}\rho)\leq 3\beta-2(\beta-1)$.

When the restriction $\rho\vert_T$ is non-central chose $\gamma_0\in T$ such that $\rho(\gamma_0)$ 
is non-central. 
Then apply the same argument as above to the equalities $\gamma_0\gamma_i=\gamma_i\gamma_0$,
for $i=1,\ldots,\beta$. Now the image of 
\eqref{eqn:embedd} is contained in the kernel of a linear map
$\sl2^{\beta}\oplus \CC^2\to  \sl2^{\beta}$ of rank $2 \beta $, hence  
$\dim Z^1(\Gamma,\mathrm{Ad}\rho)\leq 3\beta+2-2\beta$.
\end{proof}

 For a morphism $\tau\colon T\to \mathbb Q/\mathbb Z$ and a splitting $\Gamma\cong \mathbb Z^\beta\times T$, define 
 the subset of representations 
 $V_\tau \subset (R(\Gamma,\SL2)_{\textrm{red}}$ as:
 \begin{equation}
  \label{eqn:vtau}
   V_\tau=\{ \exp( \theta a)\exp(2\pi \tau a)\mid \theta\in\hom(\Gamma,\CC)  , \   a\in \mathcal Q  \},
 \end{equation} 
 where $\mathcal Q$ denotes the quadric
 $$\mathcal Q=\{  a\in\sl2\mid \det(a)=1\}=\sl2\cap\SL2.$$
 To see that the set $ V_\tau$ consists of representations, notice that $ \exp(2\pi a)=
{\id}$ for every $a\in \mathcal Q$.
The splitting $\Gamma\cong \mathbb Z^\beta\times T$ is used to extend 
$\tau\colon T\to \mathbb Q/\mathbb Z$ to the whole $\Gamma$, but the set $ V_\tau$
does not depend on this extension or splitting.

We shall use $V_\tau$ to describe the preimage of  $X(\Gamma,\SL2)_\chi$
by the natural projection
$R(\Gamma,\SL2)\to X(\Gamma,\SL2)$, that we denote by 
$$R(\Gamma,\SL2)_\chi =
\{\rho\in R(\Gamma,\SL2) \mid \chi_{\rho\vert_T}=\chi\} .
$$


Consider $\chi\in X(T,\SL2)$ non-central. 
Every representation of $T$
with character $\chi$ is of the form $\exp(2\pi\tau a)$, for some $a \in \mathcal Q$ and some  
$\tau\colon T\to \mathbb Q/\mathbb Z$. 
Notice that for $\chi\in X(T,\SL2)$ non-central, we can chose either $\tau$ or $-\tau$, 
but this is the only ambiguity in the choice of $\tau$. 
In fact  $V_\tau$= $V_{\tau'}$ if and only if $\tau=\pm \tau'$.

\begin{Lemma}
\label{lemma:non-central}
For $\chi\in X(T,\SL2)$ non-central, and $\tau$ as above, 
$R(\Gamma,\SL2)_\chi$ is reduced and
 \[
 R(\Gamma,\SL2)_\chi= V_\tau\cong (\CC^*)^\beta\times \mathcal Q.
 \]
In particular 
$R(\Gamma,\SL2)_\chi$ is irreducible and smooth. 
\end{Lemma}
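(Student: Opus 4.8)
The plan is to reduce the whole question to the representation theory of a single maximal torus, and then to upgrade the resulting set-theoretic description to an isomorphism of schemes. Since $\chi$ is non-central, for every $\rho\in R(\Gamma,\SL2)_\chi$ I can fix $t_0\in T$ with $\rho(t_0)$ non-central; as $t_0$ has finite order, $\rho(t_0)$ is semisimple with two \emph{distinct} eigenvalues, hence regular. Because $\Gamma$ is abelian, $\rho(\Gamma)$ commutes with $\rho(t_0)$ and therefore lies in its centralizer, which is the unique maximal torus $\mathcal T_a:=\exp(\CC a)$ through $\rho(t_0)$, for a well-defined $a\in\mathcal Q$ up to sign. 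This is the crucial structural point: every representation in $R(\Gamma,\SL2)_\chi$ is diagonalizable with image in a maximal torus.

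Granting this, I would identify $R(\Gamma,\SL2)_\chi$ with $V_\tau$ and exhibit the isomorphism with $(\CC^*)^\beta\times\mathcal Q$ explicitly. Writing $\rho(\gamma)=\exp(g(\gamma)a)$ for a homomorphism $g\colon\Gamma\to\CC/2\pi\ZZ$, the finite-order constraint on $T$ forces $g|_T=2\pi\tau$ for a homomorphism $\tau\colon T\to\QQ/\ZZ$ determined by $\chi$ up to the sign absorbed by $a\leftrightarrow -a$; splitting off $\theta\in\hom(\Gamma,\CC)$ on the free generators then places $\rho$ in $V_\tau$, while conversely every element of $V_\tau$ restricts on $T$ to a representation with character $\chi$. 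The parametrization sends $(a,w_1,\dots,w_\beta)\in\mathcal Q\times(\CC^*)^\beta$ to the representation with $\rho(\gamma_i)=\tfrac{w_i+w_i^{-1}}{2}\,\id+\tfrac{w_i-w_i^{-1}}{2\sqrt{-1}}\,a$ and $\rho(t)=\exp(2\pi\tau(t)a)$; this is polynomial in the $w_i^{\pm1}$ and linear in $a$, hence a morphism, and it defines a genuine representation because all images lie in the abelian group $\mathcal T_a$ and respect the torsion relations. For the inverse I recover $a=\tfrac{2\sqrt{-1}}{\zeta-\zeta^{-1}}\bigl(\rho(t_0)-\tfrac{\tr\rho(t_0)}{2}\id\bigr)$, where $\zeta\neq\pm1$ is the fixed eigenvalue of $\rho(t_0)$, and then extract $w_i$ from $\tr\rho(\gamma_i)$ and $\tr(\rho(\gamma_i)a)$; these are rational expressions in the matrix entries, so the inverse is also a morphism and the parametrization is an isomorphism of varieties. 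In particular $R(\Gamma,\SL2)_\chi$ is smooth and irreducible of dimension $\beta+2$, since $\mathcal Q$ is a single $\SL2$-conjugacy class (a smooth irreducible affine quadric surface) and $(\CC^*)^\beta$ is a smooth irreducible torus.

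Finally, to pass from this variety computation to reducedness of the \emph{scheme} I would bootstrap through the tangent space bound. Every $\rho\in R(\Gamma,\SL2)_\chi$ is non-central, so Lemma~\ref{lemma:dimz1} gives $\dim Z^1(\Gamma,\Ad\rho)\leq\beta+2$; combined with the chain $\beta+2=\dim_\rho R(\Gamma,\SL2)_{\mathrm{red}}\leq\dim Z^1(\Gamma,\Ad\rho)$ of Lemma~\ref{lemma:schemesmooth}, both inequalities become equalities, and the equality case of Lemma~\ref{lemma:schemesmooth} shows that the local ring $A(\Gamma)_{\mathfrak m_\rho}$ is reduced and that $\rho$ is a smooth point. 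Since $R(\Gamma,\SL2)_\chi$ is the preimage of the isolated point $\chi$ of the finite set $X(T,\SL2)$, it is open and closed in $R(\Gamma,\SL2)$, so reducedness of all its local rings yields reducedness of the scheme by Lemma~\ref{lem:loc_to_glob_red}. I expect the main obstacle to be the middle step: verifying that the transcendentally defined sets $V_\tau$ and $R(\Gamma,\SL2)_\chi$ coincide and that the parametrization by $\mathcal Q\times(\CC^*)^\beta$ is a genuinely \emph{algebraic} isomorphism. This is where one must make the recovery of $a$ and of the $w_i$ rational in the matrix entries and keep careful track of the sign ambiguity $\tau\leftrightarrow-\tau$ that is absorbed by $a\leftrightarrow -a$.
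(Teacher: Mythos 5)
Your proof is correct, and its overall skeleton is the paper's: identify $(R(\Gamma,\SL2)_\chi)_{\mathrm{red}}$ with $V_\tau$, parametrize it by $(\CC^*)^\beta\times\mathcal Q$, and then squeeze $\beta+2\leq\dim_\rho R(\Gamma,\SL2)_{\mathrm{red}}\leq\dim Z^1(\Gamma,\Ad\rho)\leq\beta+2$ using Lemmas~\ref{lemma:schemesmooth} and~\ref{lemma:dimz1} to get scheme smoothness, hence reducedness, at every point. You diverge in two details, both defensible. First, the paper justifies the set-theoretic equality $(R(\Gamma,\SL2)_\chi)_{\mathrm{red}}=V_\tau$ by the blanket claim that abelian subgroups of $\SL2$ lie in one-parameter groups; you instead use that $\rho(t_0)$ is regular semisimple (finite order and non-central forces two distinct eigenvalues), so its centralizer is the unique maximal torus $\exp(\CC a)$ and $\rho(\Gamma)$ is trapped inside it. This is sharper and is exactly what the non-central hypothesis provides, sidestepping the central/parabolic configurations that the blanket claim glosses over. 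Second, where the paper deduces biregularity of the bijection $(\CC^*)^\beta\times\mathcal Q\to V_\tau$ from smoothness plus Zariski's main theorem \cite[3.20]{Mumford}, you construct the inverse explicitly, recovering $a$ linearly from $\rho(t_0)-\tfrac{\tr\rho(t_0)}{2}\id$ and the eigenvalues $w_i$ from $\tr\rho(\gamma_i)$ and $\tr\bigl(\rho(\gamma_i)a\bigr)$; this is more computational but more elementary, and it makes the variety isomorphism logically independent of the reducedness bootstrap, whereas the paper's appeal to Zariski's main theorem requires the smoothness obtained from that same bootstrap. Your closing step (open-and-closedness of $R(\Gamma,\SL2)_\chi$ in $R(\Gamma,\SL2)$ plus Lemma~\ref{lem:loc_to_glob_red}) also makes explicit a local-to-global point the paper leaves implicit. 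One microscopic imprecision: in your chain you assert $\beta+2=\dim_\rho R(\Gamma,\SL2)_{\mathrm{red}}$ at the outset, while a priori the explicit parametrization only gives the lower bound $\geq\beta+2$ (a component of $R(\Gamma,\SL2)_{\mathrm{red}}$ through $\rho$ could conceivably be larger); the equality is what comes out of the squeeze against $\dim Z^1(\Gamma,\Ad\rho)\leq\beta+2$, so nothing is lost, but the inequality should be stated in that direction first.
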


\begin{proof}
As
all abelian subgroups of $\SL2$ are contained in a one-parameter group,
we have equality of sets
$
(R(\Gamma,\SL2)_\chi)_{\textrm{red}}= V_\tau
$. 
Using  Lemmas~\ref{lemma:schemesmooth} and~\ref{lemma:dimz1}, since 
$\dim V_\tau=\beta+2$ every point in $R(\Gamma,\SL2)$ is scheme smooth (hence reduced), and 
$(R(\Gamma,\SL2)_\chi)_{\textrm{red}}=R(\Gamma,\SL2)_\chi$. 

Finally, notice that the natural map 
$ (\CC^*)^\beta\times \mathcal Q\to V_\tau$ 
is a bijection because $\exp(2\pi\tau a)$ is non-central.
By construction this map  is regular, and by smoothness and Zariski's main theorem 
\cite[3.20]{Mumford} it is biregular.
\end{proof}

If $\chi\in X(T,\SL2)$ is central then $R(\Gamma,\SL2)_\chi$ contains parabolic representations, and we 
need other tools to study this case.

\begin{Definition}
 The \emph{cone of group homomorphisms of rank at most one} 
from $\Gamma$ 
to $\sl2$ is the determinantal variety:
$$
\hom_1(\Gamma, \sl2)\cong \hom_1(\Gamma/T, \sl2)\cong \hom_1(\CC^\beta,\sl2).
$$
\end{Definition}
Namely,  $\hom_1(\Gamma, \sl2)$ consists of all group homomorphisms
in $\hom(\Gamma, \sl2)$ whose image is contained in a linear space of dimension at most 1.
This is the $\CC$-cone on the Segre variety 
$\mathbb{P}^{\beta-1}\times \mathbb{P}^2$.

Now consider a central character $\chi\in X(T,\SL2)$.
We chose  $\rho_0\in R(\Gamma,\SL2)_\chi$ a central representation of $\Gamma$ 
whose restriction to $T$ has character $\chi$.

\begin{Lemma}
\label{lemma:central}
For $\chi\in X(T,\SL2)$ central, 
and given $\rho_0\in R(\Gamma,\SL2)_\chi$ also central, 
 \[
 (R(\Gamma,\SL2)_\chi)_{\textrm{red}}= \{ \exp(\theta)\rho_0\mid \theta\in \hom_1(\Gamma, \sl2)  \}.
 \]
Furthermore
$R(\Gamma,\SL2)_\chi$ is scheme-smooth at non-central representations. 
\end{Lemma}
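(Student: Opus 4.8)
The plan is to reduce everything to a statement about commuting tuples of matrices and then to read off both the set-theoretic description and the smoothness from a dimension count. First I would pin down the underlying set. Since $\chi$ is central and $T$ is finite, any $\rho\in R(\Gamma,\SL2)_\chi$ has $\rho|_T$ semisimple (Maschke) with the same character as the central representation $\rho_0|_T$; two semisimple representations of a finite group with equal character are conjugate, and a conjugate of a central representation is central, so in fact $\rho|_T=\rho_0|_T$. Fixing a splitting $\Gamma\cong\mathbb Z^\beta\times T$ as in Lemma~\ref{lemma:dimz1}, a point of $(R(\Gamma,\SL2)_\chi)_{\mathrm{red}}$ is therefore exactly a tuple $(\rho(\gamma_1),\ldots,\rho(\gamma_\beta))$ of pairwise commuting elements of $\SL2$, commutation with the central image of $T$ being automatic. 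The inclusion $\supseteq$ is then immediate: for $\theta\in\hom_1(\Gamma,\sl2)$ we have $\theta|_T=0$ because $\sl2$ is torsion free, so $\exp(\theta)\rho_0$ restricts to $\rho_0|_T$ and lies in $R(\Gamma,\SL2)_\chi$; and since all the $\theta(\gamma_i)$ are multiples of a single $a\in\sl2$, the matrices $\exp(\theta(\gamma_i))$ lie in one one-parameter subgroup and hence commute.

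For the reverse inclusion I would analyse a commuting tuple according to the type of a non-central entry, say $\rho(\gamma_1)$. If $\rho(\gamma_1)$ is regular semisimple, its centralizer is the maximal torus $\exp(\CC a)$ with $a$ semisimple; every $\rho(\gamma_i)$ then lies in this torus, so $\rho(\gamma_i)=\exp(\mu_i a)$, and since $\pm\id=\exp(\nu_i a)$ also lies in the torus one absorbs the central sign $\rho_0(\gamma_i)\in\{\pm\id\}$ by setting $\theta(\gamma_i)=(\mu_i-\nu_i)a$, giving $\theta\in\hom_1$ with $\exp(\theta)\rho_0=\rho$. If all entries are central, one realises the required signs by $\exp$ of a common semisimple $a$ with $\exp(a)=-\id$. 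The remaining case, where the non-central entries are non-semisimple, is where the argument is delicate and is the main obstacle: the centralizer of a non-central unipotent is $\{\pm\id\}\exp(\CC n)$, which is disconnected, and $\exp\co\sl2\to\SL2$ misses precisely the ``negative unipotents'' $-\exp(\CC n)\setminus\{-\id\}$. One must check that after absorbing the fixed central signs of $\rho_0$ the entries land in the connected component $\exp(\CC n)$; this non-surjectivity of the exponential is the only point that resists a purely formal argument.

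Finally I would establish scheme-smoothness at a non-central $\rho$ by a dimension count. The variety $\hom_1(\Gamma,\sl2)$ is the cone over the Segre embedding $\mathbb P^{\beta-1}\times\mathbb P^2$, hence of dimension $(\beta-1)+2+1=\beta+2$. At a non-central $\rho$ the parametrization $\theta\mapsto\exp(\theta)\rho_0$ is generically finite (the direction $a$ is recovered up to scale as the common centralizing direction, and the $\lambda_i$ are then determined up to the discrete ambiguity coming from $\pm\id$), so by the set-theoretic description its image has dimension $\beta+2$ near $\rho$; thus $\dim_\rho R(\Gamma,\SL2)_{\mathrm{red}}\geq\beta+2$. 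On the other hand Lemma~\ref{lemma:dimz1} gives $\dim Z^1(\Gamma,\Ad\rho)\leq\beta+2$ because $\rho$ is non-central, while Lemma~\ref{lemma:schemesmooth} gives $\dim_\rho R(\Gamma,\SL2)_{\mathrm{red}}\leq\dim Z^1(\Gamma,\Ad\rho)$. Combining the three inequalities forces equality $\dim_\rho R(\Gamma,\SL2)_{\mathrm{red}}=\dim Z^1(\Gamma,\Ad\rho)=\beta+2$, which is exactly scheme-smoothness at $\rho$ and, by the equality clause of Lemma~\ref{lemma:schemesmooth}, also reducedness there.

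I expect the genuine difficulty to be concentrated in the non-semisimple case of the reverse inclusion, where the disconnectedness of unipotent centralizers and the failure of $\exp$ to be surjective must be reconciled with the fixed choice of $\rho_0$, and in verifying that the parametrization by $\hom_1$ is generically finite rather than having positive-dimensional fibres, which is what makes the dimension count yield smoothness rather than a mere inequality.
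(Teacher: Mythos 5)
Your overall route is the same as the paper's: describe $(R(\Gamma,\SL2)_\chi)_{\mathrm{red}}$ set-theoretically as commuting tuples times the fixed central restriction to $T$, then obtain scheme smoothness at non-central points by combining the resulting local dimension $\beta+2$ with Lemmas~\ref{lemma:schemesmooth} and~\ref{lemma:dimz1}. That second half of your argument is correct and is exactly the paper's dimension count (the paper's proof is a single sentence for each half).

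The case you flag as ``resisting a purely formal argument'' is not a defect of your write-up: it is a genuine gap, and in fact the stated set equality fails there. Take $U=\left(\begin{smallmatrix}1&1\\0&1\end{smallmatrix}\right)$ and suppose $\rho_0(\gamma_1)=\id$ (the other sign is symmetric). The representation $\rho$ with $\rho|_T=\rho_0|_T$, $\rho(\gamma_1)=-U$, and $\rho(\gamma_i)=\id$ for $i\geq 2$ lies in $R(\Gamma,\SL2)_\chi$; but any $\theta\in\hom_1(\Gamma,\sl2)$ would have to satisfy $\exp(\theta(\gamma_1))=-U$, which is impossible because the image of $\exp\co\sl2\to\SL2$ is $\{g\mid \tr g\neq -2\}\cup\{-\id\}$ and $-U$ has trace $-2$ without being $-\id$. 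The paper's own one-line justification --- ``every abelian subgroup of $\SL2$ is contained in a one-parameter group'' --- is false for exactly this reason: $\langle -U\rangle$ is abelian but lies only in the disconnected group $\{\pm\id\}\exp(\CC n)$, not in any one-parameter subgroup. What is true, and what both Theorem~\ref{thm:abelian} and the smoothness claim actually need, is that the right-hand side has Zariski closure equal to $(R(\Gamma,\SL2)_\chi)_{\mathrm{red}}$: the right-hand side contains every tuple lying in a common maximal torus (central signs can be absorbed since $-\id$ lies in every torus, as in your regular semisimple case), and a degeneration of torus tuples --- letting the semisimple direction $a_t$ tend to a nilpotent $n$ while the parameters $\lambda_i(t)$ are scaled so that $\lambda_i(t)t$ tends to an even or odd multiple of $\pi i$ --- produces the missing negative-unipotent tuples with arbitrary sign patterns. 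Equivalently, equality holds on semisimple representations, which suffices for the character-theoretic application since every character is the character of a semisimple representation; and the local dimension (hence the scheme-smoothness conclusion you derive) is unaffected, since $\dim_\rho$ only sees the closure.
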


\begin{proof}
As in Lemma~\ref{lemma:non-central},
equality $(R(\Gamma,\SL2)_\chi)_{\textrm{red}}= \{ \exp(\theta)\rho\mid \theta\in \hom_1(\Gamma, \sl2)$ as sets
 is a consequence of the fact that every abelian subgroup of $\SL2$ is contained in a one-parameter group,
 which in its turn is the image by the exponential of a line in $\sl2$. 
 Scheme smoothness at non-central representations follows again from Lemmas~\ref{lemma:schemesmooth} and~\ref{lemma:dimz1}
 and the dimension count.
\end{proof}

\begin{Lemma}
\label{Lemma:QuadraticCone}
The cone $\hom_1(\Gamma, \sl2)$ is isomorphic to the quadratic cone 
to $R(\Gamma,\SL2)$ at 
any central
representation and it is the whole tangent cone.
\end{Lemma}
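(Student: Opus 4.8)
The plan is to compute the tangent cone directly from the defining equations of $R(\Gamma,\SL2)$ in local coordinates at a central representation $\rho_0$, and to match it with the determinantal description of $\hom_1(\Gamma,\sl2)$. First I would record that, since $\rho_0$ is central, $\Ad\rho_0$ is trivial, so the Zariski tangent space is $T_{\rho_0}R(\Gamma,\SL2)=Z^1(\Gamma,\Ad\rho_0)=\hom(\Gamma,\sl2)\cong\hom(\CC^\beta,\sl2)$, a copy of $(\sl2)^\beta$ coordinatised by the images $A_1,\dots,A_\beta$ of the free generators (torsion generators contribute nothing, as $\sl2$ is torsion-free). Writing $\rho(\gamma_i)=\epsilon_i(\id+A_i)$ with $\epsilon_i=\pm1$, the condition $\det=1$ reads $\tr A_i=-\det A_i$, so $A_i$ is traceless to first order, while each commutator relation $[\gamma_i,\gamma_j]=1$ becomes exactly $[A_i,A_j]=0$.

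Next I would extract the quadratic initial forms. The entries of $[A_i,A_j]$ are already homogeneous of degree two, and under the standard identification $\sl2\cong\CC^3$ carrying the Lie bracket to the cross product, the three components of $[A_i,A_j]$ are precisely the three $2\times2$ minors of the block $[\,A_i\mid A_j\,]$. Ranging over $1\le i<j\le\beta$, these are all the $2\times2$ minors of the $3\times\beta$ matrix with columns $A_1,\dots,A_\beta$, whose common zero locus is the determinantal variety of rank at most one, i.e.\ the affine cone over the Segre embedding $\mathbb{P}^{\beta-1}\times\mathbb{P}^2$, which is exactly $\hom_1(\CC^\beta,\sl2)\cong\hom_1(\Gamma,\sl2)$. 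Since the ideal $M$ generated by the $2\times2$ minors of a generic matrix is prime, the quadratic cone $V(M)$ is reduced and irreducible and equals $\hom_1(\Gamma,\sl2)$; this settles the first assertion.

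It remains to prove that the tangent cone coincides with this quadratic cone. Let $\mathrm{in}(I)$ denote the initial ideal defining $TC_{\rho_0}R(\Gamma,\SL2)\subset T_{\rho_0}$. The minors lie in $\mathrm{in}(I)$, being the initial forms of the commutator relations, so $M\subseteq\mathrm{in}(I)$ and hence $|TC_{\rho_0}R(\Gamma,\SL2)|\subseteq\hom_1(\Gamma,\sl2)$. For the reverse inclusion I would use Lemma~\ref{lemma:central}: near $\rho_0$ the reduced variety is $\{\exp(\theta)\rho_0\mid\theta\in\hom_1(\Gamma,\sl2)\}$. Given $\theta_0\in\hom_1(\Gamma,\sl2)$, its image lies in a line $\CC v$, so $\theta_0=f\,v$ for a homomorphism $f\co\Gamma\to\CC$; then $t\mapsto\exp(t\theta_0)\rho_0$ is a genuine arc of representations, since the matrices $\exp(t f(\gamma)v)$ commute and $\rho_0$ is central, lying in $R(\Gamma,\SL2)_{\mathrm{red}}$ with tangent direction $\theta_0$. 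As any arc tangent lies in the tangent cone, we obtain $\hom_1(\Gamma,\sl2)\subseteq|TC_{\rho_0}R(\Gamma,\SL2)|$, hence set-theoretic equality.

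Finally I would upgrade this to an equality of schemes. Set-theoretic equality gives $\sqrt{\mathrm{in}(I)}=\sqrt M=M$, the last step because $M$ is prime; combined with $M\subseteq\mathrm{in}(I)\subseteq\sqrt{\mathrm{in}(I)}=M$ this forces $\mathrm{in}(I)=M$, so the tangent cone is $V(M)=\hom_1(\Gamma,\sl2)$, reduced and equal to the quadratic cone. The main obstacle is exactly this last bootstrap: a priori $\mathrm{in}(I)$ could acquire extra higher-degree initial forms (for instance from mixed commutators $[\gamma_i,\tau]$ with torsion elements $\tau$, whose initial forms have degree $\ge3$ once the torsion directions are pushed to higher order), and it is the primeness of the determinantal ideal $M$, together with the arc-filling supplied by Lemma~\ref{lemma:central}, that guarantees no such extra forms survive and that the quadratic cone is already the whole tangent cone.
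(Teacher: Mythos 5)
Your proof is correct, and its two computational pillars are the same as the paper's: the degree-two part of a group commutator is the Lie bracket (your expansion of $(\id+A_i)(\id+A_j)(\id+A_i)^{-1}(\id+A_j)^{-1}$ is exactly the $t^2$ term of the Baker--Campbell--Hausdorff formula the paper invokes), and rank-$\le 1$ homomorphisms exponentiate to genuine arcs of representations (the paper phrases this as the vanishing of all higher BCH terms on commuting pairs). Where you genuinely diverge is in how the set-level statement is promoted to a scheme-level one. The paper simply asserts that integrability makes $\hom_1(\Gamma,\sl2)$ the whole tangent cone; taken literally this identifies only underlying sets, and a fully rigorous version leans on the obstruction-theoretic/Goldman--Millson input mentioned only after the lemma. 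Your Nullstellensatz sandwich $M\subseteq\operatorname{in}(I)\subseteq\sqrt{\operatorname{in}(I)}=M$, hinging on primeness of the generic determinantal ideal, gives the equality of ideals $\operatorname{in}(I)=M$, i.e.\ an isomorphism of the associated graded ring with the reduced, normal determinantal ring --- which is precisely what Lemma~\ref{lem:reduced} consumes in Corollary~\ref{cor:centralreduced}, so your route carries more of the section's load. One detail to repair: the initial forms of the commutator relations are not literally the entries of $[A_i,A_j]$. Computing inverses by adjugates, the degree-$\le 2$ part of the relation is $(\tr A_i+\tr A_j)\id+[A_i,A_j]+(\det A_i+\det A_j+\tr A_i\tr A_j)\id$, so the diagonal entries have \emph{linear} initial forms; one must first subtract the determinant relations $\tr A_k+\det A_k$ and then absorb the leftover $\tr A_i\tr A_j\,\id$ using that $\tr A_i,\tr A_j$ already lie in $\operatorname{in}(I)$, and only then does $M\subseteq\operatorname{in}(I)$ follow. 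This is a cosmetic fix that leaves your sandwich argument intact.
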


\begin{proof}
 Let  $\rho$ be a central representation. In particular $ \mathrm{Ad}\,\rho $ is trivial and
 \[
 Z^1(\Gamma,\mathrm{Ad}\rho)=\hom(\Gamma, \sl2)
 \]
 (namely, group homomorphisms with no restriction on the dimension of the image).
 To compute the quadratic cone, notice that, by
 Baker-Campbell-Hausdorff formula,
 for any $\theta \in \hom(\Gamma, \sl2)$:
 \[
  e^{t \theta(\gamma_1)} e^{t \theta(\gamma_2)} e^{-t \theta(\gamma_1)} e^{-t \theta(\gamma_2)}
  = e^{t^2 [\theta(\gamma_1), \theta(\gamma_2)]+ O(t^3)}, \qquad \forall \gamma_1,\gamma_2\in\Gamma.
 \]
This yields that $\hom_1(\Gamma, \sl2)$ is the quadratic cone at $\rho$. (This can be viewed equivalently with the 
obstruction theory of Goldman.)
Not only $\hom_1(\Gamma, \sl2)$ is the quadratic cone, by construction
it is also the whole tangent cone, because all higher order terms in Baker-Campbell-Hausdorff formula vanish when 
$ [\theta(\gamma_1), \theta(\gamma_2)]=0$. 
\end{proof}

Related to Lemma~\ref{Lemma:QuadraticCone}, notice that a theorem of Goldman and Millson \cite[Thm.~9.3]{GoldmanMillson} guarantees that the singularities
of $R(\Gamma,\SL2)$ are at most quadratic,  as $\Gamma$ is virtually a Bieberbach group.

Since $\hom_1(\Gamma, \sl2)$ is a determinantal scheme 
 it is reduced and normal \cite{BrunsVetter},
and by Lemma~\ref{lem:reduced}, we get:

\begin{Corollary}
\label{cor:centralreduced}
 A central representation is a reduced and normal point   of the scheme
 $R(\Gamma,\SL2)$.
\end{Corollary}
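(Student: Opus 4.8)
The plan is to deduce the statement directly from the computation of the tangent cone in Lemma~\ref{Lemma:QuadraticCone} together with the criterion of Lemma~\ref{lem:reduced}. First I would fix a central representation $\rho\co\Gamma\to\SL2$ and recall that, by Lemma~\ref{lem:reduced}, it suffices to prove that the tangent cone $TC_\rho R(\Gamma,\SL2)$ is reduced and normal: these properties of the associated graded ring lift to the local ring $A(\Gamma)_{\mathfrak{m}_\rho}$ through the initial-term argument of Proposition~\ref{prop:reduced_normal}.

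Next I would invoke Lemma~\ref{Lemma:QuadraticCone}, which identifies the \emph{whole} tangent cone at $\rho$ with the cone $\hom_1(\Gamma,\sl2)$ of group homomorphisms of rank at most one. This cone is, by definition, the $\CC$-cone on the Segre variety $\mathbb{P}^{\beta-1}\times\mathbb{P}^2$, that is, the generic determinantal scheme of $\beta\times 3$ matrices of rank at most one, cut out by the $2\times 2$ minors of a matrix of indeterminates. The classical theory of determinantal varieties \cite{BrunsVetter} then gives that this scheme is reduced (indeed an integral, Cohen--Macaulay domain) and normal. Combining this with the reduction of the first step yields that $A(\Gamma)_{\mathfrak{m}_\rho}$ is reduced and normal, which is exactly the assertion that $\rho$ is a reduced and normal point.

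The main point is therefore not a new computation but the correct bookkeeping: the crucial input from Lemma~\ref{Lemma:QuadraticCone} is that $\hom_1(\Gamma,\sl2)$ is the entire tangent cone and not merely the quadratic cone, so that the good algebraic properties of the determinantal scheme can be transferred to the local ring via the graded-to-local principle of Proposition~\ref{prop:reduced_normal}. The only external fact doing real work is the reducedness and normality of generic determinantal varieties, which I would simply cite rather than reprove.
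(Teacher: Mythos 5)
Your proposal is correct and follows essentially the same route as the paper: it combines Lemma~\ref{Lemma:QuadraticCone} (the whole tangent cone at a central representation is $\hom_1(\Gamma,\sl2)$), the reducedness and normality of determinantal schemes from \cite{BrunsVetter}, and the graded-to-local transfer of Lemma~\ref{lem:reduced} (via Proposition~\ref{prop:reduced_normal}). Your explicit emphasis that the \emph{whole} tangent cone, not merely the quadratic cone, is what is needed matches the paper's intent exactly.
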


From the previous results in this section we deduce:

\begin{Corollary} The scheme $R(\Gamma,\SL2)$ is reduced and normal. Its singular
locus is precisely the set of central representations, that have quadratic singularities modeled in the $\CC$-cone on 
$\mathbb{P}^{\beta-1}\times \mathbb{P}^2$.
\end{Corollary}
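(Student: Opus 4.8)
The plan is to derive the corollary from the fiberwise analysis of the previous lemmas together with the local-to-global principles of Section~\ref{sec:notation}. The starting observation is that, because $X(T,\SL2)$ is finite and $\mathrm{res}\colon R(\Gamma,\SL2)\to X(T,\SL2)$ is a scheme morphism to a discrete (reduced, zero-dimensional) base, each fiber $R(\Gamma,\SL2)_\chi$ is simultaneously open and closed. Hence $R(\Gamma,\SL2)=\bigsqcup_{\chi\in X(T,\SL2)}R(\Gamma,\SL2)_\chi$ as schemes, and reducedness, normality, and the location of the singular locus may all be verified one fiber at a time.

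First I would dispose of the non-central fibers. For $\chi$ non-central, Lemma~\ref{lemma:non-central} identifies $R(\Gamma,\SL2)_\chi$ with $(\CC^*)^\beta\times\mathcal{Q}$ and states that it is reduced, irreducible and smooth; being smooth and irreducible it is normal, and it contains no singular point, in particular no central representation.

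Next come the central fibers, where I would argue pointwise. By Lemma~\ref{lemma:central}, a point of $R(\Gamma,\SL2)_\chi$ is either non-central, in which case it is scheme smooth and therefore, via Lemma~\ref{lemma:schemesmooth}, a reduced point with regular (hence normal) local ring; or it is central, in which case Corollary~\ref{cor:centralreduced} asserts that it is a reduced and normal point. Thus every local ring of the fiber is an integrally closed domain. Its underlying reduced variety is $\{\exp(\theta)\rho_0\mid\theta\in\hom_1(\Gamma,\sl2)\}$, the image of the irreducible cone $\hom_1(\Gamma,\sl2)$, so the fiber is irreducible and hence integral; Lemmas~\ref{lem:loc_to_glob_red} and~\ref{lem:loc_to_glob_int} then promote the pointwise statements to reducedness and normality of the entire fiber. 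Reassembling the fibers shows that $R(\Gamma,\SL2)$ is reduced and normal.

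For the singular locus I would use that the smooth points are exactly the scheme smooth reduced points. Every non-central representation is scheme smooth by Lemmas~\ref{lemma:non-central} and~\ref{lemma:central}, so it is a smooth point. At a central representation, Lemma~\ref{Lemma:QuadraticCone} identifies the full tangent cone with the $\CC$-cone on the Segre variety $\mathbb{P}^{\beta-1}\times\mathbb{P}^2$, whose vertex is a genuine quadratic singularity, so the central representations are precisely the singular points. The main obstacle I anticipate is this central case: one must both guarantee that the fiber is irreducible, so that Lemma~\ref{lem:loc_to_glob_int} (stated for domains) applies, and confirm that the Segre cone is genuinely singular. The latter is delicate for small $\beta$, since for $\beta=1$ the cone on $\mathbb{P}^0\times\mathbb{P}^2$ degenerates to the linear space $\sl2$ and the central representations are in fact smooth; the clean statement ``singular locus $=$ central representations'' therefore requires $\beta\ge 2$, which I would flag explicitly.
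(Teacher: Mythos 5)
Your proof is correct and is essentially the paper's own argument: the paper offers no separate proof of this corollary beyond ``From the previous results in this section we deduce,'' and your assembly of Lemma~\ref{lemma:non-central}, Lemma~\ref{lemma:central}, Corollary~\ref{cor:centralreduced} and Lemma~\ref{Lemma:QuadraticCone}, glued by the clopen fiber decomposition over the finite set $X(T,\SL2)$ and the local-to-global Lemmas~\ref{lem:loc_to_glob_red} and~\ref{lem:loc_to_glob_int} (with the irreducibility of the central fibers supplied so that the domain hypothesis applies), is exactly the intended deduction, just with the implicit steps made explicit. Your flag about $\beta\geq 2$ is a legitimate refinement of the statement rather than a defect of your proof: for $\beta\leq 1$ the cone on $\mathbb{P}^{\beta-1}\times\mathbb{P}^2$ degenerates to a linear space, so central representations are smooth points (compare Example~\ref{ex:finite} and $R(\ZZ,\SL2)\cong\SL2$), and the assertion ``singular locus $=$ central representations'' does require that hypothesis.
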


\begin{proof}[Proof of Theorem~\ref{thm:abelian}] As
$R(\Gamma,\SL2)$ is reduced and normal, and its irreducible components
 are $R(\Gamma,\SL2)_\chi$,  $X(\Gamma,\SL2)$ is also reduced and normal, and is components are the
$X(\Gamma,\SL2)_\chi$.

Fix $a\in \mathcal{Q}$ and consider 
\begin{equation*}
\begin{array}{rcl}
 \tilde \varphi_a\colon\hom(\Gamma,\CC)\cong\CC^\beta&\to& R(\Gamma,\SL2)_\chi \\
 \theta&\mapsto  &  \exp(\theta a)\rho_0
\end{array}
\end{equation*}
where $\rho_0\in R(\Gamma,\SL2)_\chi$ is a central representation when $\chi$
is central as in Lemma~\ref{lemma:central},
or $\rho_0=\exp(2\pi \tau a)$ for some nontrivial $\tau\colon T\to \ZZ/\mathbb{Q}$ as in~\eqref{eqn:vtau}
when $\chi$ is not central.
The map
 $\tilde \varphi_a$ factors through the exponential map on each factor $\CC$ to 
\begin{equation*}
\varphi_a\colon (\CC^*)^\beta\to R(\Gamma,\SL2)_\chi.
\end{equation*}
We compose it with the projection $\pi\colon R(\Gamma,\SL2)_\chi\to X(\Gamma,\SL2)_\chi$ and 
we
claim that the composition 
\begin{equation}
\label{eqn:bijnc}
\pi\circ\varphi_a \colon (\CC^*)^\beta\to X(\Gamma,\SL2)_\chi 
\end{equation}
is a surjection. When $\chi$ is not  central, the claim follows
from  Lemma~\ref{lemma:non-central} and the description
of $V_\tau$ in equation~\eqref{eqn:vtau}, as $\SL2$ acts transitively on
$\mathcal{Q}$ (hence every orbit by conjugation in $R(\Gamma,\SL2)_\chi$
meets the image of $\varphi_a$). When $\chi$ is central, we use the description of
Lemma~\ref{lemma:central} and in this case we similarly prove that 
 every orbit by conjugation in $R(\Gamma,\SL2)_\chi$ of a  semi-simple representation
meets the image of $\varphi_a$. We conclude the surjectivity by recalling that 
every character is the character of a semi-simple representation.

Next we discuss the inverse images of the surjection $ \pi\circ\varphi_a $ in \eqref{eqn:bijnc}. We use
two elementary properties:
\begin{itemize}
 \item If two semisimple representations have the same character, then they are conjugate.
 Notice that the image of $\varphi_a$ consists of only semisimple representations.
 \item We write a diagonal representation of $\Gamma$ as $\mathrm{diag}(\theta,\theta^{-1})$ for
 $\theta\colon\Gamma\to\mathbb{C}^*$ a homomorphism. If  $\mathrm{diag}(\theta_1,\theta_2^{-1})$ is conjugate to
 $\mathrm{diag}(\theta_2,\theta_2^{-1})$, then $\theta_1 = \theta_2^{\pm 1}$.
\end{itemize}
To apply these remarks to our situation,  we may assume that $a\in\mathcal{Q}$ is diagonal. 
Now, if $\tau$ is central, then it follows  that   
$ \pi\circ\varphi_a $ in \eqref{eqn:bijnc}
factors to the quotient 
as in the statement of the theorem
\begin{equation}
\label{eqn:bijc}
 (\CC^*)^\beta/\sim\to X(\Gamma,\SL2)_\chi
\end{equation}
which is a bijection.
This does not hold anymore when 
$\tau$ is non central, as the restriction to $T$
of a representation in the image of $\varphi_a$
is fixed, it is a representation 
$\rho_0=\exp(2\pi \tau a)$ with $\tau\colon T\to \ZZ/\mathbb{Q}$ nontrivial. We conclude that  when 
$\tau$ is non central 
\eqref{eqn:bijnc} is already a bijection.

The maps \eqref{eqn:bijnc} and \eqref{eqn:bijc} are regular  bijections, and by Zariski's main theorem \cite[3.20]{Mumford}
they are both biregular,
since $X(\Gamma,\SL2)_\chi$
is normal.
\end{proof}

Next we describe the algebra $B(\ZZ^\beta)=\CC[ X(\ZZ^\beta,\SL2)]$ in terms of traces. Start with the presentation
$$
\ZZ^\beta=\langle a_1,\ldots ,a_\beta\mid [a_i, a_j]=1\rangle
$$

\begin{Proposition}
\label{Prop:zbeta}
With the previous presentation, 
$$
B(\ZZ^\beta)= \CC[t_{a_i}, t_{a_ia_j}]/(t_{a_ia_j}-t_{a_ja_i},f_{a_i, a_j},g_{a_i, a_j, a_k} , h_{a_i,a_j,a_k,a_l})_{i,j,k,l\in \{1,\ldots,\beta\}}
$$
where  the subindex are  different on each $  f_{a_i, a_j},g_{a_i, a_j, a_k} , h_{a_i,a_j,a_k,a_l}  $,  and 
\begin{align}
\label{eqn:relf} f_{a,b}= &  t_{a}^2 +t_{b}^2+t_{ab}^2-t_{a}t_{b}t_{ab}-4 ,\\
\label{eqn:relg} g_{a,b,c}= & t_{a}( t_{a}t_{bc}+t_{b}t_{ac}
+t_{c}t_{ab}-t_{a}t_{b}t_{c}   )  - 2 t_{ab}t_{ac}-4t_{bc} +2 t_bt_c,\\
\label{eqn:relh} h_{a,b,c,d}= & (2t_{ab}-t_at_b)(2t_{cd}-t_ct_d)- (2t_{ac}-t_at_c)(2t_{bd}-t_bt_d).
\end{align}
Furthermore, for computing trace functions of elements of $\ZZ^\beta$, we use the standard trace identities \eqref{eq:trace_rel} and 
\eqref{eqn:Vogt} and add
\begin{equation}
\label{eqn:t{abc}}
 2 t_{abc}=t_{a}t_{bc}+t_{b}t_{ac}
+t_{c}t_{ab}-t_{a}t_{b}t_{c} .
\end{equation}
\end{Proposition}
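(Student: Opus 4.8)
The plan is to specialize Theorem~\ref{Thm:ideal} to the commutator presentation and then convert the resulting ideal into the relations $f$, $g$, $h$ by means of the trace bilinear form. Set $r_{ij}=[a_i,a_j]$, so that Theorem~\ref{Thm:ideal} gives $B(\ZZ^\beta)\cong B(\mathbb{F}_\beta)/I$, where $I$ is generated by $t_{r_{ij}}-2$, $t_{a_kr_{ij}}-t_{a_k}$ and $t_{a_ka_lr_{ij}}-t_{a_ka_l}$ for $i<j$, $1\le k\le\beta$ and $k<l$. First I would eliminate the three--generator traces: since $a_ia_ja_k=a_ia_ka_j$ in $\ZZ^\beta$ we have $t_{a_ia_ja_k}=t_{a_ia_ka_j}$, and combining this with the universal identity $t_{abc}+t_{acb}=p$ of \eqref{eq:p_and_q} yields \eqref{eqn:t{abc}}, i.e.\ $t_{a_ia_ja_k}=p/2$ is a polynomial in the $t_{a_i},t_{a_ia_j}$ (equivalently $B(\ZZ^\beta)=\ns B(\ZZ^\beta)$ by Proposition~\ref{prop:abelianize}). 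Substituting this into the defining equation \eqref{eq:F} of each rank--three free subgroup collapses it to the discriminant relation $\Delta=p^2-4q=0$ of \eqref{eq:disc}. Thus $B(\ZZ^\beta)$ becomes a quotient of $\CC[t_{a_i},t_{a_ia_j}]$, and the relation $t_{a_ia_j}-t_{a_ja_i}$ holds automatically because $t_{ab}=t_{ba}$ in \eqref{eq:trace_rel}.

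The device for recognizing the surviving relations is the symmetric bilinear form $\langle X,Y\rangle:=\tr(X_0Y_0)=t_{XY}-\tfrac12 t_Xt_Y$ used in the proof of Lemma~\ref{lemma:123-132}, where $X_0=X-\tfrac12\tr(X)\id$. Writing $G=(G_{ij})$ for the Gram matrix $G_{ij}=t_{a_ia_j}-\tfrac12 t_{a_i}t_{a_j}$ of the trace--free parts of the generators (with $G_{ii}=\tfrac12(t_{a_i}^2-4)$), a direct computation identifies $f$, $g$, $h$, up to nonzero scalars, with the $2\times2$ minors of $G$:
\[
f_{a,b}=-(G_{aa}G_{bb}-G_{ab}^2),\quad g_{a,b,c}=2(G_{aa}G_{bc}-G_{ab}G_{ac}),\quad h_{a,b,c,d}=4(G_{ab}G_{cd}-G_{ac}G_{bd}).
\]
As the indices range over $2$, $3$ and $4$ distinct values these exhaust all $2\times2$ minors of the symmetric matrix $G$, so $(f,g,h)$ is the determinantal ideal cutting out $\operatorname{rank}G\le1$. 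For the inclusion $(t_{a_ia_j}-t_{a_ja_i},f,g,h)\subseteq I$, note first that $t_{r_{ij}}-2$ equals $f_{a_i,a_j}$ by the classical trace identity for a commutator. For $g$ and $h$ I would argue geometrically: since $B(\ZZ^\beta)$ is reduced by Theorem~\ref{thm:abelian}, it suffices that these elements vanish at every character, and indeed every character of $\ZZ^\beta$ comes from a representation with commuting image, whose trace--free parts lie on a common line; this forces $\operatorname{rank}G\le1$ and hence the vanishing of all its $2\times2$ minors.

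It remains to prove the reverse inclusion $I\subseteq(t_{a_ia_j}-t_{a_ja_i},f,g,h)$, i.e.\ that the generators of $I$ already lie in the ideal spanned by the minors. Here one expands $t_{a_kr_{ij}}-t_{a_k}$ and $t_{a_ka_lr_{ij}}-t_{a_ka_l}$ using \eqref{eq:trace_rel} and Vogt's identity \eqref{eqn:Vogt}, substitutes \eqref{eqn:t{abc}} for the three--generator traces, and carries out a case analysis on the coincidences among the indices $i,j,k,l$: three distinct indices should produce a multiple of $g$, four distinct indices a multiple of $h$, while index coincidences fall back on $f$ and contribute nothing new. In particular the $3\times3$ Gram determinants, to which the relations $\Delta=0$ are proportional (by \eqref{eq:ABC-ACB} and \eqref{eq:p_and_q}, $\Delta=4\tr(X_0Y_0Z_0)^2$ is the square of the alternating form \eqref{eqn:det}, hence a scalar multiple of $\det G$ on the triple), already lie in $(f,g,h)$ by cofactor expansion, so no further relations survive. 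The main obstacle is precisely this last step: organizing the trace manipulations so that the traces of the long words $a_ka_l[a_i,a_j]$ collapse, after the substitution \eqref{eqn:t{abc}}, to exactly the prescribed $2\times2$ minors with the correct scalar factors, uniformly over all index patterns.
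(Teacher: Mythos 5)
Your strategy---specialize Theorem~\ref{Thm:ideal} to the presentation $\langle a_1,\dots,a_\beta\mid [a_i,a_j]\rangle$, identify $f,g,h$ (up to scalars) with the $2\times2$ minors of the Gram matrix $G_{ij}=t_{a_ia_j}-\tfrac12t_{a_i}t_{a_j}$, get $(f,g,h)\subseteq\ker$ from reducedness plus the rank-$\le1$ observation for commuting images, and then prove the reverse inclusion by trace manipulations---is coherent, and several individual steps are correct (in particular $t_{[a,b]}-2=f_{a,b}$, the identification of $f,g,h$ with all $2\times2$ minors, and the reduction of $F$ to $\Delta\propto\det G$ via $2t_{abc}=p$). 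But the proof has a genuine gap at exactly the point you yourself flag as ``the main obstacle'': the reverse inclusion is never carried out. What your completed steps establish is only a surjection $\CC[t_{a_i},t_{a_ia_j}]/(t_{a_ia_j}-t_{a_ja_i},f,g,h)\twoheadrightarrow B(\ZZ^\beta)$; the entire content of the proposition is that this map is injective, i.e.\ that no relations are missing, and ``expand $t_{a_ka_l[a_i,a_j]}$ with \eqref{eqn:Vogt}, substitute, and do a case analysis on index coincidences'' is a plan for that computation, not the computation.

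There is also an unacknowledged gap that makes the plan harder than you present it. The kernel of $\CC[t_{a_i},t_{a_ia_j}]\to B(\mathbb{F}_\beta)/I$ is not controlled by rewriting the generators of $I$ alone: after eliminating the three-letter traces via $2t_{abc}=p$, you must also show that \emph{all} relations already holding in $B(\mathbb{F}_\beta)$ among the generators $t_{a_i},t_{a_ia_j},t_{a_ia_ja_k}$ collapse into $(f,g,h)$. For $\beta=3$ that relation ideal is principal (the polynomial $F$ of \eqref{eq:F}, which you do treat), but for $\beta\ge4$ the presentation of $B(\mathbb{F}_\beta)$ is substantially more complicated---the paper points to Ashley--Burelle--Lawton for it---and your sketch says nothing about those relations. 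This is precisely why the paper's proof takes a different route that bypasses trace identities altogether: it invokes Theorem~\ref{thm:abelian} to write $B(\ZZ^\beta)$ as the $\sigma$-invariants of $\CC[\lambda_i^{\pm1}]$, changes coordinates to $t_{a_i}=\lambda_i+\lambda_i^{-1}$, $x_i=\lambda_i-\lambda_i^{-1}$ so that $\sigma$ is $x_i\mapsto-x_i$, and then uses the classical fact that the invariant ring is generated by the $t_{a_i}$ and $z_{ij}=x_ix_j$ with relation ideal exactly $(t_{a_i}^2-z_{ii}-4,\,z_{ij}-z_{ji},\,z_{ij}z_{kl}-z_{il}z_{kj})$; translating $z_{ij}=2t_{a_ia_j}-t_{a_i}t_{a_j}$ gives the statement. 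The completeness of the relations, which your approach must reprove by hand uniformly in $\beta$, comes there for free from invariant theory of the quotient by $\pm1$.
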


\begin{Remark}
\label{rem:Zbeta}
 \begin{enumerate}[(a)]
  \item  Equation~\eqref{eqn:t{abc}} does not need to be included in the presentation of the algebra  $B(\ZZ^\beta)  $, 
  but it is needed to describe the trace functions of group elements. 
  It can be read as $t_{abc}=\frac{1}{2}p$, where $p$ is as in \eqref{eq:p}.
  Notice that 
  from the relations \eqref{eqn:relf}, \eqref{eqn:relg} and \eqref{eqn:relh}
  it follows that $p^2- 4 q=0$, so the polynomial $F$ in  \eqref{eq:F} reads as  $F=(t_{abc}-\frac{1}{2}p)^2$.  
  Vanishing of $F$ is a standard trace identity,  but  we include \eqref{eqn:t{abc}} to get rid of the square. 
  \item Using \eqref{eqn:t{abc}}, \eqref{eqn:relg} reads as 
  $$
    g_{a,b,c}= 2( t_{a} t_{abc}  -  t_{ab}t_{ac}-2t_{bc} +t_bt_c  ).
  $$
  Notice also that, for $F$ 
  as in \eqref{eq:F}:  
  $$
  \tfrac{\partial F \phantom{t\,}}{\partial t_{bc}} = -t_{a} t_{abc}  +  t_{ab}t_{ac}+2t_{bc} -t_bt_c
  =-\tfrac{1}{2} g_{a,b,c}.$$
  \item  If we allow that some of the subindex are equal, then 
  we can reduce to a single family of equations, because:
  $$2 f_{a,b}=g_{a,b,b}\quad\textrm{ and }\quad 2g_{a,b,c}=h_{a,a,b,c}.
  $$
 \end{enumerate}
\end{Remark}

\begin{proof}[Proof of Proposition~\ref{Prop:zbeta}]
By Theorem~\ref{thm:abelian},
\[
B(\ZZ^\beta)= \left(\CC[\lambda_1,\ldots,\lambda_\beta, {\mu_1},\ldots,{\mu_\beta}] 
/ (\lambda_i\mu_i-1 )_{i=1,\ldots,\beta} \right )^\sigma
\]
where 
$\sigma(\lambda_1,\ldots,\lambda_\beta, {\mu_1},\ldots,{\mu_\beta})= 
({\mu_1},\ldots,{\mu_\beta},  \lambda_1,\ldots,\lambda_\beta)$.
A point in $X(\ZZ^\beta,\SL2)$ with coordinates   $(\lambda_1,\ldots,\lambda_\beta, {\mu_1},\ldots,{\mu_\beta})$ is
the character of the representation that maps each generator $a_i\in\mathbb Z^\beta$
to the diagonal matrix $\operatorname{diag}(\lambda_i, \frac{1}{\lambda_i})$.
%
%
To compute the invariant subalgebra, 
we change coordinates
\[
t_{a_i}=\lambda_i+\mu_i=\lambda_i+\tfrac1{\lambda_i}, \qquad 
x_i=\lambda_i-\mu_i=\lambda_i-\tfrac1{\lambda_i},
\]
so
\[
B(\ZZ^\beta)=\left( \CC[  t_{a_1},\ldots, t_{a_\beta},x_1,\dots, x_\beta ]/ (t_{a_i}^2-x_i^2-4 )_{i=1,\ldots,\beta} \right)^\sigma
\]
%
and $\sigma(t_{a_1},\ldots, t_{a_\beta},x_1,\dots, x_\beta)= 
(t_{a_1},\ldots, t_{a_\beta},-x_1,\dots, -x_\beta)$.
The algebra of invariants by $\sigma$  is generated by the $t_{a_i}$ and the quadratic monomials 
\[
z_{ij}=x_ix_j, \qquad i, j=1,\ldots,\beta.
\]
Thus $B(\ZZ^\beta)=\CC[t_{a_i},z_{ij}]/I$ where $I$ is the ideal generated by: 
\[
t^2_{a_i}-z_{ii}-4,\qquad z_{ij}-z_{ji}, \qquad z_{ij}z_{kl}-z_{il}z_{kj},
\]
for $i,j,k,l\in\{1,\ldots,\beta\}$, possibly equal.
The $z_{ij}$ can be written in terms of traces as follows:
\[
z_{ij}=x_ix_j= \lambda_i\lambda_j+\frac{1}{\lambda_i\lambda_j}-\frac{\lambda_i}{\lambda_j}-\frac{\lambda_j}{\lambda_i}  
=t_{a_ia_j}-t_{a_ia_j^{-1}}= 2t_{a_ia_j}-t_{a_i}t_{a_j}
\]
and the presentation of $B(\ZZ^\beta)$ as a quotient of $\CC[t_{a_i}, t_{a_ia_j} ]$ 
follows by writing the $z_{ij}$ in terms of traces.

Finally, notice that the unique relation we need to add to compute the trace of any element 
from these variables is \eqref{eqn:t{abc}}, see the discussion in Remark~\ref{rem:Zbeta}, because the trace of elements of 
length larger that three is deduced from Vogt relation \eqref{eqn:Vogt}.
\end{proof}

We show a couple of examples that illustrate 
Theorem~\ref{thm:abelian} when $\Gamma$ has torsion.

\begin{Example}\label{ex:Z+z4}
 We compute the scheme of characters of 
 \[
 \mathbb Z\oplus \mathbb Z/4\mathbb Z=\langle a,b\mid [a,b]=b^4=1\rangle .
 \]
 As $\mathbb Z\oplus \mathbb Z/4\mathbb Z$ is a quotient of $\ZZ^2$, by 
 Proposition~\ref{Prop:zbeta} the coordinates are  $(t_a,t_b,t_ {ab})$ and they satisfy
 \begin{equation}
  \label{eqn:tcomm}
   t_a^2+t_b^2+t_{ab}^2-t_at_bt_{ab}-4=0
 \end{equation}
Furthermore, as $b^4=1$, we get 
 $$
 t_b(t_b+2)(t_b-2)=0.
 $$ 
 Thus there are three components:
 \begin{itemize}
  \item  When $t_b=0$, the character restricted to $\mathbb Z/4\mathbb Z$ is non-central. By replacing $t_b=0$ in 
  \eqref{eqn:tcomm} we get
 $t_a^2+t_{ab}^2-4=0$, which can be rewritten as 
 \[
(t_a+ i t_{ab})(t_a- i t_{ab})=4,
 \]
 which is isomorphic to $\CC^*$.
 \item When $t_b=\pm 2$,  \eqref{eqn:tcomm}  becomes $(t_a\mp t_{ab})^2=0$, and since we know it is reduced, we can get rid of the square and so $t_a=\pm t_{ab} $. Namely, two lines, because $(\CC^*/\sim) \cong \mathbb C$.
 \end{itemize}
\end{Example}

\begin{Example}\label{ex:Z+ Z+z4}
 Next consider
  $$
 \mathbb Z\oplus  \mathbb Z\oplus \mathbb Z/4\mathbb Z=\langle a,b, c\mid [a,b]=[a,c]=[b,c]=c^4=1\rangle.
 $$
 The same considerations as in Example~\ref{ex:Z+z4}  yield that the coordinates for the scheme of characters 
 are 
 $ (t_a,t_b,t_c, t_{ab},t_{ac},t_{bc})$
 and it
 has three components:
 \begin{itemize}
 \item One component has equations:
$$\left. \begin{aligned}
  t_c=& 0\\
  t_a^2+t_{ac}^2-4=& 0 \\
  t_b^2+t_{bc}^2-4=& 0 \\
  -2 t_{ab} +t_a t_b-t_{ac} t_{bc}=&0  
 \end{aligned}
 \right\}
 $$
So it is isomorphic to $\CC^*\times \CC^*$.
 \item The remaining  two components have equations
  $$\left. \begin{aligned}
 t_c\pm 2=&0 \\ 
 t_{ac}\pm t_a=&0\\  
 t_{bc}\pm t_b=&0\\  
 t_a^2+t_b^2+t_{ab}^2-t_at_bt_{ab}-4=& 0
 \end{aligned}
 \right\}
 $$
 and are isomorphic to $X(\mathbb Z^2,\SL2)\cong (\CC^*\times \CC^*)/\sim$.
 \end{itemize}
\end{Example}

\section{Examples of 
non-reduced schemes of characters}\label{sec:examples}

Kapovich and Millson have proved in \cite{Kapovich-Millson}
that there are no restrictions on the local geometry of 
$\SL2$-character schemes of 3--manifold groups, in particular
there are non-reduced character schemes of those groups. 
In this section we give explicit examples of 
non-reduced $\SL2$-character schemes of 3--manifold and orbifold groups.

\begin{Example}
 \label{ex:fig8}
Let $S^3(K,3)$
denote the three-dimensional orbifold
with underlying space $S^3$, branching locus the 
figure-eight knot $K$ and branching index 3. It is a Euclidean
orbifold \cite[Ch.~13]{ThurstonNotes}. The Euclidean structure is relevant, because the translational part
of the Euclidean holonomy is a non-integrable infinitesimal deformation
of its rotational part. This is similar to the examples of Lubotzky and Magid in 
 \cite[pp.~40--43]{LubotzkyMagid}, for
representations of Euclidean 2-orbifolds 
in $\mathrm{GL}_2(\CC)$ that are proved to be non-reduced. Furthermore
in \cite[\S 9.3]{GoldmanMillson} Goldman and Millson prove that those are double points,
as the singularities are at most quadratic. We address triple points in Example~\ref{ex:Whitehead} below,
using Nil instead of Euclidean orbifolds.

The fundamental group of  the figure eight knot exterior has a presentation:
\begin{equation*}
\pi_1(S^3\setminus K)=
\langle a,b \mid ab^{-1}a^{-1}b a=b  ab^{-1}a^{-1}b \rangle
\end{equation*}
where $a$ and $b$ are represented by meridian loops. Thus a presentation of the orbifold fundamental group 
can be obtained by adding the relation  $a^3=1$:
\begin{equation*}
\Gamma=\pi_1^{\textrm{orb}}( S^3(K,3) )=
\langle a,b \mid ab^{-1}a^{-1}b a=b  ab^{-1}a^{-1}b, \ a^3=1\rangle. 
\end{equation*}

The algebra $B(\Gamma)$ is a quotient of 
 $B(\mathbb{F}_2) \cong \CC[t_{a},t_{b},t_{ab}]$ by and ideal $I$. 
A computer supported calculation yields \cite{notebook}:
$$
I=(t_{a}-2,t_{b}-2, t_{ab}-2 )
\cap  (t_{a}+1,t_{b}+1, t_{ab}+1)
\cap  (t_{a}+1, t_{b}+1,(t_{ab}-1)^2).
$$
So $X(\Gamma,\SL2)$ consists of three points, one of them non-reduced (double point). The point
$t_{a}=t_b=t_{ab}=2$ is  the trivial character, 
$t_{a}=t_b=t_{ab}=-1$ is the non-trivial abelian character, and 
$(t_{a},t_b,t_{ab})=(-1,-1,1)$ is the \emph{double point} that corresponds
to the character of a simple representation in $\mathrm{SU}(2)$.
This representation is a lift of the rotational part of the 
holonomy of the Euclidean structure in 
$\mathrm{PSU}(2)\cong \mathrm{SO}(3)$.

We can also understand this double point as
the result of a tangency. 
The variety of characters of the figure eight knot exterior
is the plane curve given by
\begin{equation*}
 ( x^{2} y - 2 x^{2}  - y^{2} + y + 1) ( y-x^2+2)=0, 
\end{equation*}
where $x=t_a=t_b$ and $y=t_{ab}$. The group $\Gamma$ is obtained 
from  the figure eight knot
by adding the relation $a^3=1$. 
A representation of  an element of order three is either trivial
(and has trace $2$) or has trace $-1$. Therefore, the case where the image of $a$ is non-trivial corresponds to $x=-1$. The line $x=-1$
intersects $y-x^2+2=0$ transversely 
(at the abelian representation) and $ x^{2} y - 2 x^{2}  - y^{2} + y + 1 =0$ tangentially, giving the double point.

The fact that the intersection between 
$ x^{2} y - 2 x^{2}  - y^{2} + y + 1 =0$ and  $x=-1$
is not transverse 
 corresponds to the fact that
the trace of the meridian is not a local parameter 
at a Euclidean degeneration of hyperbolic cone manifolds \cite{JoanTopology}.
\end{Example}

\begin{Example}
 \label{ex:Whitehead}
Let $S^3(\mathrm{Wh},(m,n))$ be the orbifold 
with underlying space $S^3$, branching locus the 
Whitehead link $\mathrm{Wh}$, and branching indexes $m$ and $n$.
The orbifold $S^3(\mathrm{Wh},(4,2))$
has Nil geometry
\cite[p.~112]{ThesisEva}.
The isometry group of Nil surjects onto 
$\mathrm{Isom}(\mathbb R^2)$, which in its turn surjects onto $ \mathrm{O}(2)\subset \mathrm{SO}(3)$,
but the rotational part of the Nil holonomy of
$S^3(\mathrm{Wh},(4,2))$ 
in $ \mathrm{O}(2)\subset \mathrm{SO}(3) 
\cong \mathrm{PSU}(2)$
does not lift to $\mathrm{SU}(2)$, because of the elements of order two. 
Instead, it lifts to a representation of $S^3(\mathrm{Wh},(8,4))$ in  $\mathrm{SU}(2)$, 
and this is 
the orbifold we consider.

As in the previous example, we notice that
the trace of
the meridian is not a local parameter at a Nil degeneration \cite{Porti_Nil}, it is in fact a singularity of order three.  
Hence this representation will 
be a non reduced point of the character scheme of 
$S^3(\mathrm{Wh},(8,4))$.

For the explicit computation,  start with the fundamental group of the  Whitehead link exterior
$$
\pi_1(S^3\setminus \mathrm{Wh})=
\langle a,b \mid
aba^{-1}b^{-1}a^{-1}ba b = b 
aba^{-1}b^{-1}a^{-1}ba\rangle.
$$
Its scheme of characters is the hypersurface of $\CC^3$ with equation 
$$
(z^3   -xyz^2 +(x^2 + y^2-2)z  - xy)
(x^2+y^2+z^2-xyz-4)=0, 
$$
where $x=t_a$, $y=t_b$ and $z=t_{ab}$. 
The component $x^2+y^2+z^2-xyz-4=0$
consists of abelian characters. The intersection of 
$z^3   -xyz^2 +(x^2 + y^2-2)z  - xy=0$
 with the lines $x=\pm \sqrt 2$ and $y=0$
(corresponding to rotations of order $8$ and $4$
respectively)  is 
$
z^3=0
$. Hence two triple points.

The whole scheme of characters $X(S^3(\mathrm{Wh},(8,4)))$ can be computed using the notebook \cite{notebook}:
besides the two triple points, it contains 21 simple
points ($3$ simple characters, $ 4$ central characters  and $14$ abelian non-central characters).

%
%
%
%
%
%
%
%
%
%
%
%
%
\end{Example}

\begin{Example}
\label{ex:BP} In 
this example and the next one, 
we consider a 3-manifold $M_n$ with boundary a torus obtained
by attaching a $(2, 2n) $-torus link exterior $C_n$ 
 (a cable space) and  $K$, the 
 \emph{orientable} $I$-bundle over the Klein bottle, along
 a boundary component. 
 
 The $I$-bundle $K$ has the homotopy type of the Klein
 bottle, hence
 \[
  \Gamma_K = \pi_1(K) \cong \langle \gamma,\mu\mid \mu\gamma \mu^{-1} =\gamma^{-1}\rangle\,,
 \]
and the peripheral subgroup  is 
$\pi_1(\partial K) = \langle \gamma,\mu^2 \rangle$.

Let $C_n$ be the exterior of the $(2,2n)$-torus link, with $n\geq 2$. 
It  is a Seifert-fibered space with orbifold surface an annulus with one cone point of order $n$.
Hence
 \[
 \Gamma_{C_n} = \pi_1(C_n)\cong
  \langle a,c\mid [a,c^n]=1\rangle ,
 \]
where  $c^n$ is a generator of the center. Up to conjugation $ \pi_1(C_n)$ has two peripheral subgroups:
\(
 \langle ac,c^n\rangle \)  and \(
 \langle a,c^n\rangle
\).

We obtain a $3$-manifold 
$M_n = C_n\cup_h K $ by identifying the boundary component of $K$ 
with a  boundary component of $C_n$ via a homeomorphism
 $h\co \partial K \to \partial_2 C_n$,  where $\partial_2 C_n$
 is the component of $\partial C_n$ with peripheral subgroup $\langle a,c^n\rangle$
 (thus \(
 \langle ac,c^n\rangle \) is the peripheral subgroup of $M_n$).
The homeomorphism $h\colon \partial K \to \partial_2 C_n$ is chosen so that 
$ h_*(\gamma) = a$, and $h_*(\gamma\mu^2)=c^n$.  
A presentation of the fundamental group of $M_n$
is  
\[
\Gamma_n := \pi_1(M_n)
\cong \Gamma_{C_n} *_{\ZZ\oplus\ZZ} \Gamma_K
 \cong  \langle a,c,\gamma,\mu \mid
 \mu\gamma\mu^{-1}\gamma= [a,c^n]=1,\ \gamma\mu^2 = c^n,\ a =\gamma\rangle\,.
\]
We simplify the presentation:
\[
\Gamma_n \cong 
 \langle c,\mu,\gamma \mid 
 \mu\gamma\mu^{-1}=\gamma^{-1},\ \gamma\mu^2 = c^n\rangle
 \cong
 \langle c,\mu \mid 
 \mu  c^n\mu^{-2}\mu^{-1}c^n\mu^{-2}=1\rangle 
 \cong
  \langle c,\mu \mid 
 \   c^n\mu^{-3}c^n\mu^{-1}=1\rangle\,.
\]
The peripheral subgroup of $ \pi_1(M_n)$ is 
$\langle \mu^{-2}c, c^n\rangle$.

Let us consider the case $n=2$. The scheme
$X(M_2,\SL2)$ can be computed from the 
notebook \cite{notebook}, we describe the components. Firstly, since
the abelianization of $\pi_1(M_2)$ is 
$\ZZ\times\ZZ/4\ZZ$, there are three components of 
non-simple (or abelian) characters, described in 
Example~\ref{ex:Z+z4}. In addition there are two components
containing simple characters, given by the ideals
\[ 
 I = (t_{c}t_{c\mu}+2t_\mu, t_\mu^2, t_c t_\mu,\,t_{c}^2)\quad\text{ and } \quad 
 J=(t_\mu,t_{c\mu}) \, .
\]
The ideal $J$ is radical but $I$ is not:
$
\operatorname{rad}(I) = (t_c,t_{\mu})
$.

We check that $\CC[t_c,t_\mu,t_{c\mu}]/I$ 
is the coordinate ring of a double line,  
following 
Section II.3.5 in \cite{EisenbudHarris}, by considering new coordinates:
\[
 x := t_c\,t_{c\mu} + 2\,t_{\mu}\,,\quad
 y :=  t_{c}\,,\quad z:= t_{c\mu}\,.
\]
With these coordinates $I \cong (x,y^2)$ and therefore
\[
 \CC[t_c,t_\mu,t_{c\mu}]/I \cong \CC[x,y,z]/(x,y^2)\cong
 \CC[y,z]/(y^2)\,.
\]
The representation variety (not the scheme) of $M_2$ is studied by K.~Baker and K.~Petersen in \cite{BakerPetersen}. In fact $M_2$ is a 
once-punctured torus bundle with tunnel number
one. This torus bundle is also named $M_2$ in \cite{BakerPetersen}, and in \cite[Section~2.1]{BakerPetersen} the following presentation for 
$\pi_1(M_2)$ is given:
\[ \pi_1(M_2) \cong
 \langle \alpha,\beta \mid  
\beta^{-2} = \alpha^{-1}\beta\alpha^2\beta\alpha^{-1}\rangle\quad
\text{ by putting $\mu = \alpha^{-1}$ and $c = \beta\alpha^{-1}$.}
\]
\end{Example}

\begin{Example} 
\label{ex:BPn}
We continue Example~\ref{ex:BP}: now we show that  
for general $n\geq 2$, the scheme $X(M_n,\SL2)$ is non-reduced, by
generalizing the double line when $n=2$. 
As $\pi_1(M_n)$ is generated by $c$ and $\mu$, $X(M_n,\SL2)_{\textrm{red}}$ is
a subvariety of $\CC^3$ with coordinates $(t_c, t_\mu, t_{c\mu})$. Consider
$$
Y_k=\{
(t_c, t_\mu, t_{c\mu}) \in\mathbb C^3 \mid t_\mu=0, t_c=2\cos (\tfrac{\pi k}{n})
\}\subset X(M_n,\SL2)_{\textrm{red}} .
$$
for $k=1,\ldots , n-1$. 

\begin{Lemma}
\label{Lemma:Ycomponent}  For $k=1,\ldots, n-1$,
$Y_k$ is a component of  $X(M_n,\SL2)_{\textrm{red}}$ containing simple characters.
\end{Lemma}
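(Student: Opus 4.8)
The plan is to establish three things: that $Y_k$ lies in $X(M_n,\SL2)_{\mathrm{red}}$, that all but finitely many of its points are simple characters, and that at a generic such point $Y_k$ is locally all of $X(M_n,\SL2)_{\mathrm{red}}$; since $Y_k$ is an irreducible curve, the last fact makes it a component. First I would realize the points of $Y_k$ by honest representations. Writing $C=\rho(c)$ and $M=\rho(\mu)$, I set $C=\mathrm{diag}(\zeta,\zeta^{-1})$ with $\zeta=e^{\pi i k/n}$, so that $t_c=2\cos(\pi k/n)$ and $C^n=(-1)^k\id$ is central, and I take $M$ with $\tr M=0$, hence $M^2=-\id$ by Cayley--Hamilton. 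Because $\zeta\neq\zeta^{-1}$, the value $\tr(CM)$ can be prescribed to be any complex number by adjusting the diagonal of $M$, so every point of $Y_k$ is attained. The defining relator then evaluates to $C^nM^{-3}C^nM^{-1}=(-1)^{2k}M^{-4}=\id$, using $M^4=\id$; thus $\rho$ is a representation of $\Gamma_n$ and $Y_k\subset X(M_n,\SL2)_{\mathrm{red}}$.

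For the simplicity statement I would use that a two--generator character $(t_c,t_\mu,t_{c\mu})$ is non--simple exactly when the polynomial $f_{c,\mu}$ of \eqref{eqn:relf} vanishes. On $Y_k$ this reads $t_{c\mu}^2=4-t_c^2=4\sin^2(\pi k/n)\neq0$, which holds at only two points; every other point of $Y_k$ is a simple character.

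The main work, and the main obstacle, is to prove that $Y_k$ is a whole component. The tangent space alone will not settle this: a Fox--calculus computation of the relator shows that the cocycle condition for $\rho$ as above is $(\id+\Ad_M)\big(\sum_{j=0}^{n-1}\Ad_C^{\,j}\,d(c)-2\,d(\mu)\big)=0$, whose image lies in the one--dimensional $(+1)$--eigenspace of the involution $\Ad_M$; hence $\dim Z^1=6-1=5$, and since $\rho$ is simple ($H^0=0$, $\dim B^1=3$) Theorem~\ref{thm:H1ZT} gives $\dim H^1(\Gamma_n,\Ad\rho)=2$ while $\dim Y_k=1$. So the scheme is non--reduced along $Y_k$ and no dimension bound can be read off from the tangent space; one is forced to examine the actual (obstructed) deformations.

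Instead I would analyze the representations directly near a generic simple $\chi\in Y_k$ with $t_{c\mu}\neq0$, for which the corresponding $M$ is non--diagonal with nonzero diagonal entries. After conjugation every nearby representation has $C'=\mathrm{diag}(\lambda,\lambda^{-1})$ with $\lambda$ near $\zeta$, and the relation reads $C'^nM'^{-3}C'^n=M'$, where $D:=C'^n=\mathrm{diag}(\Lambda,\Lambda^{-1})$, $\Lambda=\lambda^n$. If $D$ is central, the nearby value is $\Lambda=(-1)^k$, forcing $\lambda^{2n}=1$ and hence $\lambda=\zeta$ (roots of unity being isolated), so $t_{c'}=t_c$; the relation reduces to $M'^4=\id$, which near $M$ forces $\tr M'=0$, and therefore $\chi_{\rho'}\in Y_k$. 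If $D$ is non--central, I would substitute $M'^{-3}=(s^3-2s)\id+(1-s^2)M'$ with $s=\tr M'$ (Cayley--Hamilton) and compare entries of $C'^nM'^{-3}C'^n=M'$: the off--diagonal entries give $s^2=0$, and then the diagonal entries force the diagonal of $M'$ to vanish, contradicting that $M'$ is close to $M$. Thus only the central case survives near $\chi$; every nearby character lies on $Y_k$, and—since at a simple character nearby characters come from nearby representations—$X(M_n,\SL2)_{\mathrm{red}}$ agrees with $Y_k$ in a neighborhood of $\chi$. Being an irreducible curve, $Y_k$ is a component. The delicate point is precisely this dichotomy: the extra branch coming from $D$ non--central is pushed off $Y_k$ only because $t_{c\mu}\neq0$ (at the special value $t_{c\mu}=0$ it meets $Y_k$, which is why a generic point must be chosen).
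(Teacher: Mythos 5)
Your proof is correct, but the key step is carried out by a genuinely different route than the paper's. The paper argues through the geometric decomposition $M_n=C_n\cup_h K$: it produces $Y_k$ by bending (conjugating the images of $\pi_1(K)$ and $\pi_1(C_n)$ independently, which is possible because the restriction of the character to the gluing torus is central), and it proves maximality by classifying representations of the Klein-bottle group $\pi_1(K)$ into two geometric families --- (a) $\rho(\gamma)=\pm\id$, and (b) representations preserving an unoriented geodesic --- and then deducing from the attaching relations $\gamma\mu^2=c^n$, $a=\gamma$ that every representation of $\pi_1(M_n)$ either has abelian image, or preserves an unoriented geodesic, or has $\rho(\mu)^2=-\id$ and $\rho(c)^n$ central, hence character on some $Y_j$; in effect this covers $X(M_n,\SL2)_{\mathrm{red}}$ by finitely many closed sets, of which the first two miss the generic points of $Y_k$, so irreducibility forces any component through $Y_k$ to equal $Y_k$. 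You instead work purely locally and in coordinates with the one-relator presentation $\langle c,\mu\mid c^n\mu^{-3}c^n\mu^{-1}\rangle$: you diagonalize $\rho'(c)$ near a generic simple point, split on whether $\rho'(c)^n$ is central, and kill the non-central branch by Cayley--Hamilton. This is a local, explicit avatar of the same dichotomy: your central case recovers the paper's case (a) with $\rho(\mu^2)=-\id$, and your non-central contradiction ($s=0$ forcing $m'_{11}=m'_{22}=0$) is exactly where the paper's geodesic-preserving locus, on which $t_\mu=t_{c\mu}=0$, is pushed away from the generic point. What your version buys: it is self-contained (the paper leaves its two assertions as ``elementary considerations''), it isolates precisely where genericity enters (simplicity via $f_{c,\mu}\neq0$ from \eqref{eqn:relf}, plus $t_{c\mu}\neq0$), and it records the infinitesimal data $\dim Z^1=5$, $\dim H^1(\Gamma_n,\Ad\rho)=2$ that the paper only derives later, in the proof of Lemma~\ref{Lemma:Ynoreduced}. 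What the paper's version buys: it is global and structural, one classification handling all the curves $Y_j$ and locating the remaining (abelian and geodesic-preserving) loci at once, and its case analysis is geared to reuse in Lemma~\ref{Lemma:Ynoreduced}. Two facts you use tacitly should be stated: that near a simple character every nearby character is the character of a nearby representation (openness of the quotient map on the simple locus, where it is a geometric quotient), and that a nonempty Euclidean-open subset of an irreducible variety is Zariski dense, so that any irreducible component through $\chi$ is contained in, hence equal to, the closed irreducible curve $Y_k$.
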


\begin{Lemma}
\label{Lemma:Ynoreduced}
The scheme  $X(M_n,\SL2)$ is non-reduced at any point of 
$Y_k$, for $k=1,\ldots, n-1$.
%
\end{Lemma}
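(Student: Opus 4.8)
The plan is to compute the Zariski tangent space of $X(M_n,\SL2)$ at the simple characters on $Y_k$ and compare it with $\dim Y_k=1$. First I would exhibit the relevant representations explicitly. Normalising $\rho(c)=\operatorname{diag}(\zeta,\zeta^{-1})$ with $\zeta=e^{i\pi k/n}$ gives $t_c=2\cos(\tfrac{\pi k}{n})$, and since $1\le k\le n-1$ the element $\rho(c)^n=(-1)^k\id$ is central. The condition $t_\mu=0$ forces $\rho(\mu)^2=-\id$, so the single defining relation $c^n\mu^{-3}c^n\mu^{-1}$ is automatically satisfied; letting $\rho(\mu)$ run through the trace-free matrices that are not diagonal produces simple representations whose characters cover $Y_k$. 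In particular every point of $Y_k$ is the character of a simple representation.

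Next I would compute $H^1(\Gamma,\Ad\rho)$ for such a simple $\rho$ by Fox calculus on the one-relator presentation $\Gamma\cong\langle c,\mu\mid r\rangle$ with $r=c^n\mu^{-3}c^n\mu^{-1}$. Writing $\Phi=\Ad\circ\rho$ and $Q=\Ad(\rho(\mu))$, the cocycle space is the kernel of the relation map $(\xi,\eta)\mapsto\Phi(\partial r/\partial c)\,\xi+\Phi(\partial r/\partial\mu)\,\eta$ on $\sll_2\oplus\sll_2$. The Fox derivatives are $\partial r/\partial c=(1+c^n\mu^{-3})(1+c+\dots+c^{n-1})$ and $\partial r/\partial\mu=-c^n(\mu^{-1}+\mu^{-2}+\mu^{-3})-r$. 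Now $\Ad(\rho(c)^n)=\id$ makes $Q$ an involution ($Q^2=\id$), and $\sum_{j=0}^{n-1}\Ad(\rho(c))^{j}=n\,\Pi$, where $\Pi$ is the projection onto the Cartan line $\CC\cdot\operatorname{diag}(1,-1)$, because the off-diagonal eigenvalues $\zeta^{\pm2}$ of $\Ad(\rho(c))$ satisfy $\zeta^{2n}=1$ while $\zeta^{2}\neq1$. Evaluating, both derivatives factor through $\id+Q$:
\[
\Phi(\partial r/\partial c)=n(\id+Q)\Pi,\qquad \Phi(\partial r/\partial\mu)=-2(\id+Q).
\]
Since $\operatorname{Im}(\id+Q)$ is the centraliser of $\rho(\mu)$ in $\sll_2$, namely the line $\CC\,\rho(\mu)$, the relation map has rank exactly $1$. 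Hence $\dim Z^1(\Gamma,\Ad\rho)=6-1=5$, and as $\rho$ is simple $\dim B^1=\dim\sll_2=3$, so $\dim H^1(\Gamma,\Ad\rho)=2$.

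By Theorem~\ref{thm:H1ZT} the scheme tangent space $T_{\chi_\rho}X(M_n,\SL2)$ is then $2$-dimensional. On the other hand $Y_k$ is a one-dimensional component of $X(M_n,\SL2)_{\mathrm{red}}$ by Lemma~\ref{Lemma:Ycomponent}, and at a point of $Y_k$ lying on no other component the reduced variety is a smooth curve, so its tangent space there is $1$-dimensional. As the variety tangent space injects into the scheme tangent space, this strict drop forces a nonzero nilpotent in the local ring, i.e.\ non-reducedness at such $\chi_\rho$. Finally, the non-reduced locus is Zariski-closed by Lemma~\ref{lem:non-reduced_closed}(3), and the points just treated form a dense subset of $Y_k$; hence the whole line $Y_k$ consists of non-reduced points.

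The main obstacle is the rank-one computation of the relation map: everything hinges on the two algebraic facts that $\rho(c)^n$ is central and $\Ad(\rho(\mu))$ is an involution, which together force both Fox derivatives into the rank-one image of $\id+Q$; the hypothesis $1\le k\le n-1$ enters precisely in the collapse $\sum_j\Ad(\rho(c))^j=n\,\Pi$ through $\zeta^{2}\neq1$. A secondary point is the passage from the generic (smooth) points of $Y_k$ to all of $Y_k$, which I prefer to settle by closedness of the non-reduced locus rather than by a delicate pointwise analysis at the finitely many special characters where $Y_k$ may meet other components.
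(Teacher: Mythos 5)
Your proof is correct, but it reaches the key estimate $\dim H^1(\Gamma_n,\Ad\rho)\geq 2$ by a genuinely different route than the paper. The paper argues topologically: deformations along $Y_k$ restrict to a constant character on $\partial M_n$, giving $\dim\ker\big(H^1(M_n,\Ad\rho)\to H^1(\partial M_n,\Ad\rho)\big)\geq 1$, while the standard Poincar\'e duality (``half lives, half dies'') argument gives $\operatorname{rank}\big(H^1(M_n,\Ad\rho)\to H^1(\partial M_n,\Ad\rho)\big)=1$, whence $\dim H^1\geq 2$. You instead compute $H^1$ exactly by Fox calculus on the one-relator presentation, exploiting that $\rho(c)^n$ is central and $\Ad\rho(\mu)$ is an involution to show the relation map has rank exactly $1$, so $\dim Z^1=5$ and $\dim H^1=2$. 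Your computation is elementary and self-contained (no $3$-manifold duality needed) and yields the precise dimension; the paper's argument is softer but more robust, since it would apply to situations where the presentation is unwieldy and only the peripheral structure is understood. After this point the two proofs coincide: Theorem~\ref{thm:H1ZT} converts $H^1$ into the scheme tangent space, the comparison with $\dim Y_k=1$ gives nilpotents at generic points, and Lemma~\ref{lem:non-reduced_closed} propagates non-reducedness to all of $Y_k$. One small inaccuracy: not \emph{every} point of $Y_k$ is a simple character --- at the two points where $t_{c\mu}^2=4-t_c^2$ (equivalently $\tr\rho([c,\mu])=2$, where your matrix $\rho(\mu)$ is forced to be triangular) the character is that of a non-simple, in fact diagonal, representation. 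This does not damage your argument, since your final closedness-plus-density step only needs simple characters to be dense in $Y_k$, which they are.
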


\begin{proof}[Proof of Lemma~\ref{Lemma:Ycomponent}]
 First we describe  a one-parameter family of characters and next  we prove that this family  forms a component.
 Consider characters
 $\chi\in X(M_n,\SL2)$ such that:
 \begin{itemize}
  \item $\chi(\mu)=\chi(\mu\gamma)=0$ and $\chi(\gamma)=2$. Namely $\chi$ restricted to $\pi_1(K)$
  is the character of a representation of $\pi_1(K)$ that maps $\mu$ to a rotation of angle $\pi$ in $\mathbb H^3$
  and $\gamma$ to the identity.
  \item $\chi(c)=\chi(ac)=\cos(\frac{\pi k}{n})$ and $\chi(a)=2$. Therefore $\chi$ restricted to $\pi_1(C_n)$
  is the character of a representation of $\pi_1(C_n)$ that maps $c$ to a rotation of angle $2\pi k/n$ in $\mathbb H^3$
  and $a$ to the identity.
  \end{itemize}
Thus the restriction of $\chi$ to both $\pi_1(K)$ and $\pi_1(C_n)$ is the character of a representation with image a cyclic group, and the restriction to the 
 attaching torus is central, i.e.~contained in $\{\pm{\id}\}$. 
One can deform such a character by 
 conjugating separately
 the image of $\pi_1(K)$ and the image of $\pi_1(C_n)$, yielding a one-parameter of conjugacy classes of representations of $\pi_1(M_n) $.

To prove that this family of characters is  a component, we first look at representations
of $\pi_1(K)=\langle  \gamma,\mu\mid \mu\gamma \mu^{-1} =\gamma^{-1}\rangle$. They lie in two families: 
\begin{itemize} 
 \item[(a)] Representations that map $\gamma$ to 
 $\pm{\id}$, and $\mu$
 to any value.

 \item[(b)] Representations that preserve an unoriented hyperbolic geodesic 
 $l\subset\mathbb{H}^3$, that map 
$\gamma$ to an isometry preserving the orientation of $l$, and that map $\mu$
and  $\mu\gamma$ to $\pi$-rotations with axis perpendicular to $l$.
%
\end{itemize}
Given $\rho$ a representation of $\pi_1M$, we make two assertions:
\begin{itemize}
 \item If the restriction $\rho\vert_{\pi_1K}$ is in case (a) and $\rho(\mu^2)\neq -{\id}$,
 then $\rho(\pi_1M)$ is abelian.
 \item If the restriction $\rho\vert_{\pi_1K}$ is in case (b) and  $\rho(\gamma)\neq\pm {\id}$, then 
 $\rho(\pi_1M)$ preserves an unoriented geodesic in hyperbolic space.
\end{itemize}
Both assertions follow 
from 
the attaching relations 
$\gamma\mu^2= c^n$ and $a=\gamma$ by elementary considerations, and they yield that $Y_k$ is a component.
\end{proof}


\begin{proof}[Proof of Lemma~\ref{Lemma:Ynoreduced}]
For any  $\chi\in Y_k$ simple,  $\chi$ restricted to $C_n$ is constant (see the proof of Lemma~\ref{Lemma:Ycomponent}). 
In particular its restriction
to $\partial M_n$ is also constant. By writing $\chi=\chi_\rho$, as we can construct deformations of $\chi=\chi_\rho$ that remain constant in 
$\partial M_n$, 
$$
\dim \big( \ker\big(H^1(M_n, \Ad\rho)\to H^1(\partial M_n, \Ad\rho)\big)\big)\geq 1.
$$
On the other hand, by a standard argument on Poincar\'e duality \cite{Hodgson,Sikora}, 
$$
\operatorname{rank}\big(H^1(M_n, \Ad\rho)\to H^1(\partial M_n, \Ad\rho)\big)= 1
$$
Thus $\dim H^1( M_n; \Ad\rho)\geq 2$.
It follows that the Zariski tangent space to the scheme at a simple $\chi\in Y_k$ has dimension $\geq 2$ 
by Theorem~\ref{thm:H1ZT}. 
As generic characters in  $Y_k$ are simple and 
$\dim Y_k=1$, the scheme is non-reduced 
at generic points of $Y_k$.
By Lemma~\ref{lem:non-reduced_closed}, the set of non-reduced points is the Zariski-closed and it contains $Y_k$.
\end{proof}

\end{Example}

\begin{Remark}
The line $Y_k$ restricts to a point in $X(\partial M_ n,\SL2)$. 
The image of the restriction is always an isotropic subspace, and when the 
restriction is non-singular the image is a Lagrangian submanifold, cf.~\cite{Hodgson,Sikora}.
This example proves that non-singularity is needed to have a Lagrangian submanifold. 
\end{Remark}

\begin{Remark}
 The manifold $M_n$ of Example~\ref{ex:BPn} 
 should be compared to an example of Schnauel and Zhang
 \cite{SchnauelZhang}, 
 who attach a cable space to a torus knot exterior. 
 In this way they obtain a boundary slope not detected
 by valuations on the variety of characters.
 \end{Remark}

\section{The character scheme of the Borromean rings}

Let us start with a presentation for the group $\Gamma_\mathit{Bor}$ of the Borromean rings.

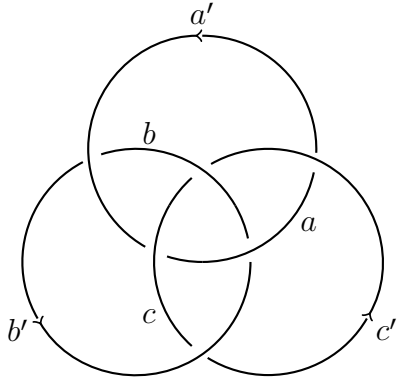
\begin{figure}[htbp]
\begin{center}
\begin{tikzpicture}
\draw[thick, >-] (0,2.5)  arc[ radius = 1.5, start angle= 90, end angle= 240];
\draw[thick] (0,2.5)  arc[ radius = 1.5, start angle= 90, end angle= -2];
\draw[thick] (0 , -.5) arc[ radius = 1.5, start angle= 270, end angle= 252];
\draw[thick] (0 , -.5) arc[ radius = 1.5, start angle= 270, end angle= 360-12];
\draw(0,2.8) node{$a'$};
\draw(1.4,0) node{$a$};
\begin{scope}[rotate=120]
\draw[thick, >-] (0,2.5)  arc[ radius = 1.5, start angle= 90, end angle= 240];
\draw[thick] (0,2.5)  arc[ radius = 1.5, start angle= 90, end angle= -2];
\draw[thick] (0 , -.5) arc[ radius = 1.5, start angle= 270, end angle= 252];
\draw[thick] (0 , -.5) arc[ radius = 1.5, start angle= 270, end angle= 360-12];
\draw(0,2.8) node{$b'$};
\draw(1.4,0) node{$b$};
\end{scope}
 \begin{scope}[rotate=-120]
\draw[thick, >-] (0,2.5)  arc[ radius = 1.5, start angle= 90, end angle= 240];
\draw[thick] (0,2.5)  arc[ radius = 1.5, start angle= 90, end angle= -2];
\draw[thick] (0 , -.5) arc[ radius = 1.5, start angle= 270, end angle= 252];
\draw[thick] (0 , -.5) arc[ radius = 1.5, start angle= 270, end angle= 360-12];
\draw(0,2.8) node{$c'$};
\draw(1.4,0) node{$c$};
\end{scope}
\end{tikzpicture}
\end{center}
\caption{The Borromean rings}
\label{fig}
\end{figure}

We take the over presentation and we obtain:
\[
\left\{
\begin{array}{ccc}
a' & = & c' a (c')^{-1},\\
b' & = & a' b (a')^{-1},\\
c' & = & b' c (b')^{-1},
\end{array}\right.
\quad \text{ and }\quad
\left\{
\begin{array}{ccc}
a' & = & c a c^{-1},\\
b' & = & a b a^{-1},\\
c' & = & b c b^{-1}.
\end{array}\right.
\]
This gives the presentation
\begin{equation}\label{eq:pres_bor}
\Gamma_\mathit{Bor}\cong
\langle a, b, c \mid [a,[b,c^{-1}]],\ [b,[c,a^{-1}]], [c,[a,b^{-1}]] \rangle
\end{equation}
and as usual one of the relation is a consequence of the other two relations.
The generators $a$, $b$ and $c$ are meridians, and the corresponding longitudes are 
\[ 
\ell_a = [b,c^{-1}], \quad \ell_b= [c,a^{-1}] \quad \text{ and } \quad \ell_c =[a,b^{-1}]\,.
\]
In what follows we consider the free group $\mathbb F(a,b,c)$ and the surjection 
$\mathbb F(a,b,c)\twoheadrightarrow\Gamma_{Bor}$. Notice also that there is a surjective homomorphism
$\Gamma_\mathit{Bor}\twoheadrightarrow\mathbb{Z}^3$ (the abelianization). Moreover, there are three obvious surjections $\Gamma_\mathit{Bor}\twoheadrightarrow \mathbb  F_2$, one of the three meridians is mapped onto the trivial element the other meridians are mapped to a generating pair of $\mathbb F_2$.

From these surjections, we obtain an closed immersions
\[
X(\mathbb{Z}^3,\SL2),\ X(\mathbb F_2,\SL2) \hookrightarrow X_{Bor} \hookrightarrow 
X(\mathbb F(a,b,c),\SL2)\,.
\]
Here we write $X_\mathit{Bor} : = X(\Gamma_\mathit{Bor},\SL2)$ for short.

Recall that the coordinate ring of $X(F(a,b,c),\SL2)\subset\CC^7$ is reduced and isomorphic to the quotient
\[
\CC[t_a,t_b,t_c,t_{ab},t_{ac},t_{bc},t_{abc}]/(t_{abc}^2 - p\, t_{abc} + q)
\]
where $p$ and $q$ are the polynomials defined in equations \eqref{eq:p} and \eqref{eq:q} respectively.

\medskip

The scheme $X_\mathit{Bor}$ has several components. 
These components arise from the surjections of $\Gamma_\mathit{Bor}$
 onto $\mathbb{Z}^3$, and the free group $\mathbb F_2$.
%
%
Let us first identify the components: 
\begin{itemize}
\item First of all there is at least one the distinguished component $X_0$ 
which contains the character of a lift of the holonomy representation. We aim to determine its ideal~$I_0$.

\item There are the characters of the non-simple representations $\ns X\subset X_\mathit{Bor}$. Since 
the Abelianization is a surjection $\Gamma_\mathit{Bor}\twoheadrightarrow \ZZ^3$ 
we obtain an inclusion $X(\ZZ^3,\SL2)\hookrightarrow X_\mathit{Bor}$. 
The ideal $\ns I$
corresponding to this component was investigated in Section~\ref{sec:abelian}.
According to Proposition~\ref{Prop:zbeta}, the ideal $I_{ns}$ is generated by
\begin{multline}
t_a^2 + t_b^2 + t_{ab}^2 - t_a t_b t_{ab} -4,\quad 
t_a^2 + t_c^2 + t_{ac}^2 - t_a t_c t_{ac} -4,\quad 
t_b^2 + t_c^2 + t_{bc}^2 - t_b t_c t_{bc} -4,\\
t_a (t_a t_{bc}+t_b t_{ac}+t_c t_{ab}-t_a t_b t_c)-2 t_{ab} t_{ac}-4 t_{bc}+2 t_b t_c,\\
t_b (t_a t_{bc}+t_b t_{ac}+t_c t_{ab}-t_a t_b t_c)-2 t_{ab} t_{bc}-4 t_{ac}+2 t_a t_c,\\
t_c (t_a t_{bc}+t_b t_{ac}+t_c t_{ab}-t_a t_b t_c)-2 t_{ac} t_{bc}-4 t_{ab}+2 t_a t_b, \\
                2 t_{abc}-t_a t_{bc}-t_b t_{ac}-t_c t_{ab}+t_a t_b t_c. 
\end{multline}
The ideal $I_{ns}$ is a prime, and hence radical, since 
$ \CC[t_a,t_b,t_c,t_{ab},t_{ac},t_{bc},t_{abc}]/I_{ns}$
the coordinate algebra of the irreducible variety
$(\CC^*\times\CC^*\times\CC^*)/\sim$ (see Theorem~\ref{thm:abelian}).

\item 
Next there are representations which map one of the generators to a central element $\pm \id\in\SL2$.
This gives rise to six $3$-dimensional components
$X_a^\pm$, $X_b^\pm$ and $X_c^\pm$
in $X_\mathit{Bor}$. Each of those components is isomorphic to $\CC^3$ and has therefore no singular points. The ideal corresponding to
$X_a^\pm$ is 
\[
I_a^\pm := \Big( t_a  \mp 2, t_{ab} \mp t_b, t_{ac} \mp t_c, t_{abc} \mp t_{bc} \big)
\]
and 
\[
\CC[t_a,t_b,t_c, t_{ab},t_{ac}, t_{bc},t_{abc}]/ I_a^\pm \cong \CC[t_b,t_c, t_{bc}]\,.
\]
Similar for the other generators. All these ideals are prime ideals, and hence they are radical.
\end{itemize}

\begin{Theorem} The coordinate algebra $\CC[X_{Bor}] =\CC[X_{Bor}]_{red}$ is reduced. More precisely,
we have 
\[  
\CC[X_{Bor}] \cong \CC[t_a,t_b,t_c, t_{ab},t_{ac}, t_{bc},t_{abc}] / I_{Bor}
\]
where $I_{Bor} = \ns I \cap I_a^+ \cap I_a^-\cap I_b^+ \cap I_b^-
\cap I_c^+ \cap I_c^- \cap I_0$ is the intersection of prime ideals in
$\CC[t_a,t_b,t_c, t_{ab},t_{ac}, t_{bc},t_{abc}]$. Therefore, the representation scheme has eight irreducible components
\[
X_{Bor} = \ns X \cup X_a^+ \cup X_a^- \cup X_b^+ \cup X_b^-\cup X_c^+ \cup X_c^-\cup X_0\,.
\]
\end{Theorem}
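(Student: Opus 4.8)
The plan is to produce explicit generators of the defining ideal from Theorem~\ref{Thm:ideal}, to recognise the underlying variety as the union of the eight listed pieces, and finally to verify that the ideal is already radical so that the scheme is reduced. First I would apply the Addendum to Theorem~\ref{Thm:ideal} to the presentation~\eqref{eq:pres_bor}. Writing each of the three commutator relations in the form $u_l=v_l$, the Addendum records the finite list of trace relations $t_{v_l}-t_{u_l}$, $t_{\gamma_i v_l}-t_{\gamma_i u_l}$ and $t_{\gamma_j\gamma_k u_l}-t_{\gamma_j\gamma_k v_l}$. Adjoining these to the relation $F=t_{abc}^2-p\,t_{abc}+q$ of~\eqref{eq:CoRi} and reducing every trace to the generators $t_a,\dots,t_{abc}$ by the Vogt and Cayley--Hamilton identities \eqref{eq:trace_rel} and \eqref{eqn:Vogt} gives an explicit finite ideal $\tilde I\subset\CC[t_a,t_b,t_c,t_{ab},t_{ac},t_{bc},t_{abc}]$ with $\CC[X_{Bor}]\cong\CC[t_a,\dots,t_{abc}]/\tilde I$. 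This is a finite computation in the free-group trace algebra, carried out in~\cite{notebook}.

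Second I would pin down the underlying variety $X_{Bor}=V(\tilde I)$. Each listed piece is a closed subvariety of $X_{Bor}$: the abelian part $\ns X$ is the image of $X(\ZZ^3,\SL2)\hookrightarrow X_{Bor}$ under abelianization, with prime ideal $\ns I$ given by Proposition~\ref{Prop:zbeta} and irreducible by Theorem~\ref{thm:abelian}; and for each meridian the locus where it maps to $\pm\id$ leaves the other two meridians free and makes all three relations trivial, so it is a copy of $X(\mathbb F_2,\SL2)\cong\CC^3$ with prime ideal $I_a^\pm$ (and similarly $I_b^\pm$, $I_c^\pm$), whose quotient $\CC[t_b,t_c,t_{bc}]$ is a domain. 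The canonical component $X_0$ carrying the holonomy is then the one remaining minimal prime $I_0$ of $\tilde I$. Since each of these subvarieties lies in $X_{Bor}$ we get $\tilde I\subseteq\ns I\cap I_a^+\cap\dots\cap I_0=:I_{Bor}$; conversely, verifying that every character of $\Gamma_{Bor}$ falls into one of the eight pieces gives $V(\tilde I)\subseteq V(I_{Bor})$, and hence $\sqrt{\tilde I}=I_{Bor}$.

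The heart of the theorem is that the scheme, not merely the variety, is reduced: one must prove the exact equality $\tilde I=I_{Bor}$, not just $\sqrt{\tilde I}=I_{Bor}$. Unlike the non-reduced schemes of Examples~\ref{ex:fig8}--\ref{ex:BPn}, here I would verify by a Gr\"obner basis comparison in~\cite{notebook} that $\tilde I$ coincides with the intersection of its minimal primes, carrying no nilpotents and no embedded primes. Granting this, $I_{Bor}$ is an intersection of prime ideals, hence radical, so $\CC[X_{Bor}]=\CC[X_{Bor}]_{red}$ is reduced and $X_{Bor}$ splits into the eight irreducible components. The two steps I expect to resist are the primeness of $I_0$, which I would obtain by exhibiting $\CC[t_a,\dots,t_{abc}]/I_0$ as a domain via a birational parametrization of the canonical component by the meridian traces, and the on-the-nose identity $\tilde I=I_{Bor}$, which is exactly the computation that rules out the nilpotents appearing in the earlier examples and is what makes the statement scheme-theoretic rather than set-theoretic.
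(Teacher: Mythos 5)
Your proposal follows essentially the same route as the paper: compute the presentation ideal from Theorem~\ref{Thm:ideal}/the Addendum, single out the seven prime ideals of the abelian and central-meridian components, extract the one remaining component $I_0$, verify by Gr\"obner basis computation the exact (not merely up-to-radical) equality of the presentation ideal with the intersection of the eight primes, and prove $I_0$ prime via a birational projection onto an irreducible hypersurface. The only divergences are implementation details: the paper constructs $I_0$ as the ideal quotient $(I_{\mathit{Bor}}:I_7)$ rather than as a computed minimal prime, it handles the passage from radicality over $\QQ$ (where the notebook computations live) to radicality over $\CC$ by citing Lemma~3.7 of \cite{HMP}, and the birational map in Sparaco's lemma projects onto the coordinates $(t_{ab},t_{ac},t_{bc},t_{abc})$ rather than parametrizing by the meridian traces $t_a,t_b,t_c$ (which do not determine a character on $X_0$ generically).
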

\begin{proof}
Computer supported calculations \cite{notebook} give us an ideal 
$$I_7 =\ns I \cap I_a^+ \cap I_a^-\cap I_b^+ \cap I_b^-
\cap I_c^+ \cap I_c^- \subset \CC[t_a,t_b,t_c, t_{ab},t_{ac}, t_{bc},t_{abc}],$$ 
such that $I_\mathit{Bor} \subset I_7$.

If $I$, $J$ are ideals in $\CC[x_1,\ldots,x_n]$ then the \emph{ideal quotient} is 
\[
 (I : J) = \{ f\in \CC[x_1,\ldots,x_n] \mid gf\in I \text{ for all $g\in J$}\}
\]
(see Chapter~4 in \cite{Cox}).
It turns out that  $(I:J)$ is an ideal in 
$\CC[x_1,\ldots,x_n]$ containing $I$. Moreover we have that
\[
V(I:J) \supset \overline{V(I) \setminus V(J)}
\]
where $V(I)$ denotes the vanishing set of the ideal in $\CC^n$ and 
$\overline{S}$ the Zariski-closure of $S\subset\CC^n$.

Now, we use ideal division to define the ideal
$
I_0 := (I_\mathit{Bor}: I_7)
$. The notebook calculations give us that $I_0$ is radical in $\mathbb{Q}[t_a,t_b,t_c,t_{ab},t_{ac},t_{bc},t_{abc}]$ , and 
$I_{Bor} = I_0 \cap I_7$. Hence
\[
I_\mathit{Bor} = \ns I \cap I_a^+ \cap I_a^-\cap I_b^+ \cap I_b^-
\cap I_c^+ \cap I_c^- \cap I_0\,.
\]
The ideal $I_0$ is generated by
\begin{alignat}{3}
 &t_{abc}^2 - t_{abc}\,p +q, &\qquad & t_at_{bc} - t_bt_{ac}, &\qquad &t_bt_{ac} -t_ct_{ab},\notag\\
&t_at_{abc}-t_{ab}t_{ac}, & \qquad &t_bt_{abc}-t_{ab}t_{bc}, &\qquad &t_ct_{abc}-t_{ac}t_{bc}\,.\label{eq:I_0}
\end{alignat}

\begin{Lemma}\label{lem:bor_red}
The ideal $I_0$ is radical in $\CC[t_a,t_b,t_c,t_{ab},t_{ac},t_{bc},t_{abc}]$.
\end{Lemma}
\begin{proof}
Computer supported calculations show that 
$I_0\subset\mathbb{Q}[t_a,t_b,t_c,t_{ab},t_{ac},t_{bc},t_{abc}]$ is a radical ideal (see \cite{notebook}). Now,  Lemma~3.7 in \cite{HMP} shows that
\[
I_0\cdot \CC[t_a,t_b,t_c,t_{ab},t_{ac},t_{bc},t_{abc}] \subset \CC[t_a,t_b,t_c,t_{ab},t_{ac},t_{bc},t_{abc}]
\]
is also radical. 
\end{proof}

We let $X_0$ denote the corresponding subvariety of $X_\mathit{Bor}$. 
The equations~\eqref{eq:I_0} had already be studied by Sparaco \cite{Sparaco}.
\begin{Lemma}[Sparaco]
The variety $X_0$ is irreducible.
\end{Lemma}
\begin{proof}
The projection on to $\mathbb{C}^7\to\mathbb{C}^4$ onto the subspace generated by the $t_{ab}$, $t_{ac}$, $t_{bc}$ and $t_{abc}$ coordinates induces a birational map between $X_0$ and the hypersurface with equation
\begin{multline*}
t_{abc}^{4} - t_{abc}^2 (2 t_{ab} t_{ac} t_{bc} - t_{ab}^{2} - t_{ac}^{2} - t_{bc}^{2} + 4)\\ 
+ t_{ab}^{2} t_{ac}^{2} t_{bc}^{2} -  t_{ab}^{3} t_{ac} t_{bc} -  t_{ab} t_{ac}^{3} t_{bc} -  t_{ab} t_{ac} t_{bc}^{3} + t_{ab}^{2} t_{ac}^{2} + t_{ab}^{2} t_{bc}^{2} + t_{ac}^{2} t_{bc}^{2}
\end{multline*}
It follows now that this hypersurface is irreducible and hence $X_0$ also (see \cite{Sparaco}).
\end{proof}
It follows that the ideal $I_0$ is a prime ideal.
\end{proof}

\subsection{The distinguished component $X_0$}
Notice that $X_0$ is the unique component containing faithful representations. Hence $X_0$ is the canonical component of the character variety of the  Borromean rings.

The singular locus $X_0^{sing}$ is $1$-dimensional. More precisely,
the $X_0^{sing}$ is the union of twelve lines and six points \cite{notebook}. The lines are formed by characters of non-simple representations. 

The characters of the central representations are part of $X_0^{sing}$. At each character of a central representation three of the lines intersect.  So the lines form a \emph{cube} with corners the characters of a central representations.
Hence, the characters of the central representations are singular points of the singular set.

The characters of two central representations given by 
\[
\begin{cases}
a\mapsto \epsilon_a \id\\
b\mapsto \epsilon_b \id\\
c \mapsto\epsilon_c \id\\ 
\end{cases}
\quad\text{ and }\quad
\begin{cases}
a\mapsto \epsilon_a' \id\\
b\mapsto \epsilon_b' \id\\
c \mapsto\epsilon_c' \id\\ 
\end{cases}
\]
are connected by a line in $X_0^{sing}$ if and only if two of the $\epsilon_x$ and $\epsilon_x'$ are equal.
The line consists of characters of non-simple representations.
For example the character of the trivial representation $(\id,\id,\id)$, and the character of the central representation $(\id,\id,-\id)$, are connected by the line
\[
 L=\{(2,2,z,2,z,z,z)\mid z\in\CC\} \subset X_0\,.
\]

The six isolated points in $X_0^{sing}$ are characters of $\operatorname{SU}(2)$-representations.
More precisely, the points are characters of binary-dihedral representations which map one of the generators to $\pm \id$ and the other two to half-turns about two orthogonal lines. For example:
\[
(\pm2,0,0,0,0,0,0)\quad\text{ is the character of }\quad
\rho\co  
\begin{cases}
a\mapsto \pm \id\\
b\mapsto \big(\begin{smallmatrix}i&0\\0&-i\end{smallmatrix}\big)\\
c \mapsto\big(\begin{smallmatrix}0&i\\i&0\end{smallmatrix}\big)
\end{cases}.
\]
These characters are the \emph{midpoints} of the faces of the cube.

\subsection{Intersections between $X_0$ and the other components}\label{sec:int_comp}
All  components do intersect $X_0$. More precisely:
\begin{itemize}
\item $X_0\cap X^\pm_a$: These intersection form the \emph{faces} of the \emph{cube}.

The intersection $X_0\cap X^\pm_a$ is two-dimensional and isomorphic to $\CC^2$. In fact, 
\[
I_0 + I_a^\pm = \big(t_a\mp2, t_{ab}\mp t_b, t_{bc}\mp t_c, 2t_{bc} - t_bt_c,2t_{abc}\mp t_bt_c \big)
\]
and hence for the coordinate ring 
\[
\CC[X_0\cap X^\pm_a] \cong \CC[t_a,t_b,t_c, t_{ab},t_{ac}, t_{bc},t_{abc}]/ (I_0+I_a^\pm) \cong \CC[t_b,t_c]\,.
\]
Similar argument applies for $X_b^\pm$ and $X_c^\pm$.
Notice that the lines in $X_0^{sing}$ are contained in these intersection. For example:
$X_0\cap X^+_a$ contains the following four characters of central representations
\[
\begin{cases}
a\mapsto  \id\\
b\mapsto \epsilon_b \id\\
c \mapsto\epsilon_c \id\\ 
\end{cases}
\quad\text{ where $\epsilon_a,\epsilon_b\in\{\pm1\}$,}
\]
and the following four lines of characters of non-simple representations
\begin{alignat*}{2}
L_1 & =\{(2,2,z,2,z,z,z)\mid z\in\CC\},\quad & 
L_2&=\{(2,-2,z,-2,z,-z,-z)\mid z\in\CC\},\\
L_3&=\{(2,z,2,z,2,z,z)\mid z\in\CC\},\quad &
L_4&=\{(2,z,-2,z,-2,-z,-z)\mid z\in\CC\}\,.
\end{alignat*}
form a \emph{square}.

\item $X_0\cap X_{\mathrm{red}}$: 
This intersection is formed by the 12 lines of non-simple representations. It is contained in $X_0^{sing}$.
\end{itemize}

\bibliographystyle{plain}
\bibliography{traces}

\end{document}